\def \RR {\mathbb R}
\def \NN {\mathbb N}
\def \EE {\mathbb E}
\def \PP {\mathbb P}
\def \eps {\varepsilon}
\def \vphi {\varphi}
\newtheorem{theorem}{Theorem}[section]
\newtheorem{lemma}[theorem]{Lemma}
\newtheorem{proposition}[theorem]{Proposition}
\newtheorem{corollary}[theorem]{Corollary}
\newtheorem{remark}[theorem]{Remark}
\def\myffrac#1#2 in #3{\raise 2.6pt\hbox{$#3 #1$}\mkern-1.5mu\raise 0.8pt\hbox{$
#3/$}\mkern-1.1mu\lower 1.5pt\hbox{$#3 #2$}}
\def\qed{\hfill $\vcenter{\hrule height .3mm
\hbox {\vrule width .3mm height 2.1mm \kern 2mm \vrule width .3mm
height 2.1mm} \hrule height .3mm}$ \bigskip}
\begin{document}

\title{Logarithmically-concave moment measures I}
\date{}
\author{Bo'az Klartag\thanks{School of Mathematical Sciences, Tel Aviv University, Tel Aviv 69978, Israel. E-mail: klartagb@tau.ac.il. }}
\maketitle

\abstract{We discuss a certain Riemannian metric, related to the toric K\"ahler-Einstein equation,
that is associated in a linearly-invariant manner with a given log-concave measure in $\RR^n$.
We use this metric in order to bound the second derivatives of the solution to the toric K\"ahler-Einstein equation,
and in order to obtain spectral-gap estimates similar to those of Payne and
Weinberger.}

\renewcommand\cftsecfont{\normalsize}
\renewcommand\cftsecpagefont{\normalsize}
\renewcommand{\cftsecleader}{\cftdotfill{\cftdotsep}}
\renewcommand\cftsecafterpnum{\par\vspace{-10pt}}
\tableofcontents

\section{Introduction}
\label{sec1}

In this paper we
 explore a certain geometric structure related to the {\it moment measure} of a convex function.
This geometric structure is well-known in the community of complex geometers, see, e.g., Donaldson \cite{donaldson} for
a discussion from the perspective of K\"ahler geometry.

\medskip Our motivation stems from the Kannan-Lovas\'z-Simonovits conjecture \cite[Section 5]{KLS},
which is concerned with the isoperimetric problem for high-dimensional convex bodies.
Essentially, our idea
is to replace the standard Euclidean metric by a special Riemannian metric on the given convex body $K$. This Riemannian
metric has many favorable properties, such as a Poincar\'e inequality with constant one, a positive Ricci tensor, the linear functions are
eigenfunctions of the Laplacian, etc. Perhaps this alternative geometry does not deviate too much from the standard Euclidean geometry on $K$,
and it is conceivable that the study of this Riemannian metric will turn out to be relevant to the Kannan-Lovas\'z-Simonovits conjecture.

\medskip Let $\mu$ be an arbitrary Borel probability measure on $\RR^n$ whose barycenter is at the origin.
Assume furthermore that $\mu$ is not supported in a hyperplane. It was proven in \cite{CK}
that there
exists an essentially-continuous convex function $\psi: \RR^n \rightarrow \RR \cup \{+\infty \}$, uniquely determined up to translations,  such that
$\mu$ is the {\it moment measure} of $\psi$, i.e.,
$$
\int_{\RR^n} b(y) d \mu(y) = \int_{\RR^n} b(\nabla \psi(x)) e^{-\psi(x)} dx $$
for any $\mu$-integrable function $b: \RR^n \rightarrow \RR$. In other words, the gradient map $x \mapsto \nabla \psi(x)$
pushes the probability measure $e^{-\psi(x)} dx$ forward to $\mu$. The argument in \cite{CK} closely follows
 the variational approach of Berman and Berndtsson \cite{BB}, which succeeded the continuity methods of Wang and Zhu \cite{WZ}
and Donaldson \cite{donaldson}.

\medskip Even in the case where $\mu$ is absolutely-continuous with a $C^{\infty}$-smooth density, it is not guaranteed that $\psi$ is differentiable.
From the regularity theory of the Brenier map, developed by Caffarelli \cite{caf1} and Urbas \cite{urbas}, we
learn that in order to conclude that $\psi$ is sufficiently smooth, one has to assume that the support of $\mu$ is convex.

\medskip An absolutely-continuous probability measure on $\RR^n$ is called {\it log-concave}
if it is supported on an open, convex set $K \subset \RR^n$, and its density takes the form $\exp(-\rho)$
where the function $\rho: K \rightarrow \RR$ is convex. An important example of a log-concave measure
is the uniform probability measure on a convex body in $\RR^n$. Here we assume that
$\mu$ is log-concave and furthermore, we require  that the following conditions are met:
\addtocounter{equation}{1}
\newcounter{eq_935}
\addtocounter{eq_935}{\value{equation}}
\begin{enumerate}
\item[(\arabic{equation})] The convex set $K \subset \RR^n$ is bounded, the function $\rho$
is $C^{\infty}$-smooth, and $\rho$ and its derivatives of all orders are bounded in $K$.
\end{enumerate}
Under these regularity assumptions, we can assert that
\addtocounter{equation}{1}
\newcounter{eq_936}
\addtocounter{eq_936}{\value{equation}}
\begin{enumerate}
\item[(\arabic{equation})] The convex function $\psi$
is finite and $C^{\infty}$-smooth in the entire $\RR^n$.
\end{enumerate}
The validity of (\arabic{eq_936}) under the assumption (\arabic{eq_935}) was proven
by Wang and Zhu \cite{WZ} and by Donaldson \cite{donaldson} via the continuity method.
 Berman and Berndtsson \cite{BB} explained how to deduce (\arabic{eq_936}) from (\arabic{eq_935})
by using Caffarelli's regularity theory \cite{caf1}. In fact, the argument in \cite{BB} requires only the boundness
of $\rho$, and not of its derivatives, see also the Appendix in Alesker, Dar and Milman \cite{ADM}.
Since the function $\psi$ is smooth, the transport equation
\begin{equation} e^{-\rho(\nabla \psi(x))} \det \nabla^2 \psi(x) = e^{-\psi(x)}
\label{eq_1120}
\end{equation}
holds everywhere in $\RR^n$, where
$\nabla^2 \psi(x)$ is the Hessian matrix of $\psi$ (see, e.g., McCann \cite{mccann}). In the case where $\rho \equiv Const$,
equation (\ref{eq_1120}) is called the {\it toric K\"ahler-Einstein equation}.
 We write $x \cdot y$ for the standard
scalar product of $x,y \in \RR^n$, and $|x| = \sqrt{x \cdot x}$.

\begin{theorem} Let $\mu$ be a log-concave probability measure on $\RR^n$ with barycenter at the
origin that satisfies
the regularity conditions (\arabic{eq_935}). Then, with the above notation, for any $x \in \RR^n$,
$$ \Delta \psi(x) \leq 2 R^2(K) $$
where $R(K) = \sup_{x \in K} |x|$ is the outer radius of $K$, and $\Delta \psi$ is the Laplacian of $\psi$.
\label{thm2}
\end{theorem}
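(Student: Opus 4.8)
\emph{Plan.} The idea is to fix an arbitrary point $x_0\in\RR^n$ and to express $\Delta\psi(x_0)$ as a weighted Laplacian, built from the Hessian metric $\nabla^2\psi$, applied to a \emph{bounded} auxiliary function; the bound $2R^2(K)$ should then come out of a maximum principle. The only a priori input beyond the regularity of $\psi$ and the transport equation (\ref{eq_1120}) is: since $\nabla\psi$ pushes $e^{-\psi(x)}dx$ forward to $\mu$, and $\mu$ is supported in the closed ball of radius $R(K)$, we have $\nabla\psi(x)\in\overline{B}\big(0,R(K)\big)$ for Lebesgue-a.e.\ $x$, hence for \emph{every} $x$ by continuity of $\nabla\psi$; thus $|\nabla\psi|\le R(K)$ throughout $\RR^n$.

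\emph{The Laplacian of the Hessian metric.} Let $g:=\nabla^2\psi$ and
\[ \mathcal{T}f\;:=\;e^{\psi}\,\mathrm{div}\!\Big(e^{-\psi}\,(\nabla^2\psi)^{-1}\nabla f\Big), \]
which is symmetric and non-positive on $L^2(e^{-\psi}\,dx)$ and satisfies $\mathcal{T}1=0$. Differentiating $\log\det\nabla^2\psi=\rho(\nabla\psi)-\psi$ once and using (\ref{eq_1120}) gives $\mathcal{T}f=\sum_{i,j}\psi^{ij}\partial_i\partial_j f-\big\langle(\nabla\rho)(\nabla\psi),\,\nabla f\big\rangle$, where $(\psi^{ij})=(\nabla^2\psi)^{-1}$, and — this is where the toric K\"ahler--Einstein structure enters — that every function $x\mapsto\langle a,\nabla\psi(x)\rangle$ with $a\in\RR^n$ satisfies $\mathcal{T}\langle a,\nabla\psi\rangle=-\langle a,\nabla\psi\rangle$. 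From this eigenfunction identity, the Leibniz rule for $\mathcal{T}$, and $\big|\nabla\langle e_i,\nabla\psi\rangle\big|^2_g=\partial_i\partial_i\psi$, one obtains for every $a\in\RR^n$
\[ \mathcal{T}\!\Big(\tfrac12\,\big|\nabla\psi(x)-a\big|^2\Big)\;=\;\Delta\psi(x)\;-\;\big\langle\nabla\psi(x),\,\nabla\psi(x)-a\big\rangle. \]

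\emph{The auxiliary function and the reduction.} Take $a:=\nabla\psi(x_0)$ and set $\Phi(x):=\tfrac12\big|\nabla\psi(x)-\nabla\psi(x_0)\big|^2$. Then $\Phi\ge 0$, and $\Phi$ attains its global minimum, equal to $0$, at $x_0$ (indeed $\nabla\Phi(x_0)=\nabla^2\psi(x_0)\big(\nabla\psi(x_0)-\nabla\psi(x_0)\big)=0$). By the a priori bound, $0\le\Phi\le 2R^2(K)$ everywhere, since $\nabla\psi$ takes values in a set of diameter at most $2R(K)$. Evaluating the identity above at $x=x_0$ gives $\mathcal{T}\Phi(x_0)=\Delta\psi(x_0)$, while in general, combined with Cauchy--Schwarz and $|\nabla\psi|\le R(K)$, it gives the differential inequality $\mathcal{T}\Phi\ge\Delta\psi-R(K)\sqrt{2\Phi}$ on all of $\RR^n$. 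Everything is thereby reduced to showing $\mathcal{T}\Phi(x_0)\le 2R^2(K)$ for a non-negative bounded function minimized at $x_0$.

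\emph{The final step, and the main obstacle.} This last step is the delicate one: I would compare $\Phi$ with a bounded barrier and apply a maximum principle for $\mathcal{T}$ on the complete Hessian manifold $(\RR^n,g)$, exploiting that a further computation with (\ref{eq_1120}) shows the Bakry--\'Emery--Ricci tensor of $\big(g,e^{-\psi}dx\big)$ to be bounded below by $\tfrac12\,g$ (the positivity of the Ricci tensor announced in the introduction); the boundedness $\Phi\le 2R^2(K)$, this curvature lower bound and the Schr\"odinger-type inequality above should then force $\mathcal{T}\Phi(x_0)\le\sup\Phi\le 2R^2(K)$. The obstacle is precisely this de-localization. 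A maximum principle applied \emph{directly} to $\Delta\psi$ is useless — differentiating (\ref{eq_1120}) twice gives only $\mathcal{T}(\Delta\psi)\ge-\Delta\psi$ up to non-negative terms, so at an interior maximum of $\Delta\psi$ one learns merely $\Delta\psi\ge 0$ — and, since $\mathcal{T}u$ at a maximum of $u$ is $\le 0$ whereas $\Delta\psi(x_0)>0$, there is no way to realize $\Delta\psi(x_0)$ as $\mathcal{T}$ of a function maximized at $x_0$: the estimate must come from a genuinely global comparison. One must also handle the fact that $\Phi$ need not attain its supremum, which is approached only ``at infinity'', where $\nabla\psi$ tends to $\partial K$. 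Pinning down the sharp constant $2$, rather than a larger absolute constant, is the finest point.
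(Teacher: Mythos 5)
Your reduction is correct as far as it goes: the a priori bound $|\nabla\psi|\le R(K)$, the formula $\mathcal{T}f=\psi^{ij}f_{ij}-\langle\nabla\rho(\nabla\psi),\nabla f\rangle$, the eigenfunction identity $\mathcal{T}\langle a,\nabla\psi\rangle=-\langle a,\nabla\psi\rangle$ (this is Lemma 4.3 of the paper with $V=\psi$), and the resulting identity $\mathcal{T}\Phi=\Delta\psi-\langle\nabla\psi,\nabla\psi-a\rangle$, hence $\mathcal{T}\Phi(x_0)=\Delta\psi(x_0)$, are all accurate, and in dual coordinates they are exactly the paper's computation of $L$ applied to $y\mapsto\frac12|y-z|^2$ on $K$. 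But the step you yourself flag as the obstacle is the entire content of the theorem, and it is not supplied. The principle you hope for --- that for a nonnegative bounded function minimized at $x_0$ one can force $\mathcal{T}\Phi(x_0)\le\sup\Phi$ from stochastic completeness, a Ricci lower bound and your differential inequality --- is false for a general such $\Phi$ (at a minimum $\mathcal{T}\Phi(x_0)=\psi^{ij}\Phi_{ij}(x_0)$ can be arbitrarily large while $\Phi$ stays small, e.g.\ for highly oscillatory $\Phi$), and your inequality $\mathcal{T}\Phi\ge\Delta\psi-R(K)\sqrt{2\Phi}$ is a lower bound on $\mathcal{T}\Phi$, which cannot yield the needed upper bound at $x_0$. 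Note also that nothing in your reduction uses the log-concavity of $\mu$ (convexity of $\rho$), whereas the theorem genuinely needs it; so the missing step is precisely where that hypothesis must enter.

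What the paper does at this point is not an elliptic maximum principle but a parabolic/semigroup comparison in the dual picture. It runs the diffusion $X_t^{(z)}$ on $K$ with generator $Lu=\vphi^{ij}u_{ij}-x^iu_i$ and combines three ingredients: (a) $\EE X_t^{(z)}=e^{-t}z$ (your eigenfunction identity, integrated); (b) It\^o's formula for $(X_t\cdot\theta)^2$, which produces $\int_0^te^{2s}\EE\,\vphi^{\theta\theta}(X_s)\,ds$; and (c) the key Lemma 5.2, $L(\vphi^{11})^{\eps}+\eps(\vphi^{11})^{\eps}\ge0$ --- a sub-eigenfunction property of the second derivative itself, proved by a third-order computation in which the convexity of $\rho$ is used to drop $\rho_{i\ell}\vphi^{1i}\vphi^{1\ell}\ge0$ and a Hilbert--Schmidt/Cauchy--Schwarz inequality handles the cubic terms. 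This gives $\vphi^{11}(z)\le e^t\,\EE\,\vphi^{11}(X_t^{(z)})$, hence $(\nabla^2\vphi)^{-1}(z)\le\frac{e^{2t}}{2(e^t-1)}\,Cov(X_t^{(z)})$; taking $t=\log2$ and using $X_t\in K$ gives $\Delta\psi(x)=Tr[(\nabla^2\vphi)^{-1}(z)]\le2R^2(K)$, which is where the sharp factor $2$ really comes from (the minimum of $e^{2t}/(2(e^t-1))$), not from $\sup\Phi\le2R^2(K)$. Making this rigorous further requires an a priori bound $\sup\|\nabla^2\psi\|<\infty$ (Section 3, a Caffarelli-type maximum-principle argument under auxiliary strict-convexity assumptions) to show the relevant local martingales are (sub)martingales, and then a continuity-of-moment-measures argument (Section 2) to remove those auxiliary assumptions. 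None of these ingredients --- the sub-eigenfunction estimate for second derivatives, the martingale integrability, or the approximation step --- appears in your sketch, so as it stands the proposal does not prove the theorem.
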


Theorem \ref{thm2} is proven by analyzing a certain {\it weighted Riemannian manifold}.
A weighted Riemannian manifold,
sometimes called a {\it Riemannian metric-measure space}, is a triple
$$ X = (\Omega, g, \mu) $$
where $\Omega$ is a smooth manifold (usually an open set in $\RR^n$), where $g$ is a Riemannian metric on $\Omega$,
and $\mu$ is a measure on $\Omega$ with a smooth density with respect to the Riemannian volume measure.
In this paper we study the weighted Riemannian manifold
\begin{equation}  M^*_\mu = \left(\RR^n, \nabla^2 \psi , e^{-\psi(x)} dx \right). \label{eq_959} \end{equation}
That is, the measure associated with $M^*_{\mu}$ has density $e^{-\psi}$ with respect to the Lebesgue measure on $\RR^n$,
and the Riemannian tensor on $\RR^n$ which is induced by the Hessian of $\psi$ is
\begin{equation} \sum_{i,j=1}^n \psi_{ij} dx^i dx^j, \label{eq_957} \end{equation}
where we abbreviate $\psi_{ij} = \partial^2 \psi / \partial x^i \partial x^j$.
There is also a dual description of $M^*_{\mu}$. Recall that the Legendre transform of $f: \RR^n \rightarrow \RR \cup \{ + \infty \}$ is the convex function
$$ f^*(x) = \sup_{y \in \RR^n \atop{f(y) < +\infty}} \left[ x \cdot y - f(y) \right] \quad \quad \quad \quad (x \in \RR^n). $$
We refer the reader to Rockafellar \cite{roc} for the basic properties of the Legendre transform. Denote $\vphi = \psi^*$.
From (\ref{eq_1120}) we see that the Hessian matrix of the convex function $\psi$ is always invertible, hence it is positive-definite.
Therefore $\vphi$ is a smooth function in $K$ whose Hessian is always positive-definite.
Consequently, the map $\nabla \vphi: K \rightarrow \RR^n$ is a diffeomorphism, and $\nabla \psi$ is its inverse map.
One may directly verify
that the weighted Riemannian manifold $M_{\mu}^*$ is canonically isomorphic to
$$ M_{\mu} = \left(K, \nabla^2 \vphi , \mu \right), $$
with $x \mapsto \nabla \psi(x)$ being the isomorphism map. In differential geometry,
the isomorphism between $M_{\mu}$ and $M^*_{\mu}$ is the passage from complex coordinates
to action/angle coordinates, see, e.g., Abreu \cite{abreu}. Here are some basic properties of our weighted Riemannian manifold:
\begin{itemize}

\item[(i)] The space $M_{\mu}$ is stochastically complete. That is, the diffusion process associated with $M_{\mu}$ is well-defined,
it has $\mu$ as a stationary measure and ``it never reaches the boundary of $K$''.
\item[(ii)]
The Bakry-\'Emery-Ricci tensor of $M_{\mu}$ is positive. In fact, it is at least half of  the Riemannian metric tensor.
\item[(iii)] The Laplacian associated with $M_{\mu}$ has an interesting spectrum: The first non-zero eigenvalue is $-1$,
and the corresponding eigenspace contains all linear functions.
\end{itemize}

Property (ii) is a particular case of the results of Kolesnikov \cite[Theorem 4.3]{koles}
(the notation of Kolesnikov is related to ours via $V = \Phi = \psi$),
and properties (i) and (iii)
are discussed below. It is important to note that the construction of $M_{\mu}$
does not rely on the Euclidean structure, and that in principle we could have replaced
$\RR^n$ with an abstract $n$-dimensional linear space. This is in sharp contrast with the
Riemannian metric-measure space $(\RR^n, | \cdot |, \mu)$ that is frequently used
for the analysis of the log-concave measure $\mu$.

\medskip In the following sections we prove the assertions made in the Introduction, and
as a sample of possible applications, we explain below how to recover
the classical Payne-Weinberger spectral gap inequality \cite{PW}, up to a constant factor:

\begin{corollary} Let $\mu$ be a log-concave probability measure on $\RR^n$ with barycenter at the
origin that satisfies the regularity conditions (\arabic{eq_935}). Then, for any $\mu$-integrable,
smooth function $f: K \rightarrow \RR$,
\begin{equation}
 \int_K f^2 d \mu - \left( \int_K f d \mu \right)^2 \leq 2 R^2(K) \int_{K} |\nabla f|^2 d \mu. \label{eq_931}
\end{equation} \label{cor_920}
\end{corollary}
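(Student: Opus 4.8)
\medskip
\noindent{\it Sketch of a proof.} The plan is to derive (\ref{eq_931}) from two facts recorded above: the Poincar\'e inequality with constant one for the weighted Riemannian manifold $M_\mu = (K, \nabla^2 \vphi, \mu)$, which follows from the spectral-gap statement (property (iii)) and is announced in the Introduction, and the pointwise estimate $\Delta \psi \le 2 R^2(K)$ of Theorem \ref{thm2}. The Poincar\'e inequality on $M_\mu$ bounds the $\mu$-variance of $f$ by the integral of the squared length of the gradient of $f$ \emph{measured in the Riemannian metric $\nabla^2 \vphi$}, and Theorem \ref{thm2} is exactly what converts that Riemannian length into the ordinary Euclidean one, losing the factor $2 R^2(K)$.

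\medskip
Concretely, I would first record that, since the first non-zero eigenvalue of the weighted Laplacian of $M_\mu$ equals $-1$, for every smooth $f : K \to \RR$ with $\int_K |\nabla f|^2 \, d\mu < \infty$ one has
\begin{equation*}
\int_K f^2 \, d\mu - \left( \int_K f \, d\mu \right)^2 \ \le\ \int_K \big\langle (\nabla^2 \vphi(y))^{-1} \nabla f(y), \ \nabla f(y) \big\rangle \, d\mu(y) ,
\end{equation*}
the integrand on the right being the squared $\nabla^2 \vphi$-norm of the Riemannian gradient $(\nabla^2 \vphi)^{-1} \nabla f$ of $f$; when $\int_K |\nabla f|^2 \, d\mu = \infty$ the desired inequality is trivial, and the reduction to, say, compactly supported test functions is a routine approximation, legitimate here because $K$ is bounded, $\vphi$ is smooth, and $M_\mu$ is stochastically complete (property (i)).

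\medskip
Then I would compare the two gradient norms appearing in this bound. Because $\vphi = \psi^*$, the map $\nabla \psi$ inverts $\nabla \vphi$ on $K$; differentiating $\nabla \psi(\nabla \vphi(y)) = y$ yields $(\nabla^2 \vphi(y))^{-1} = \nabla^2 \psi(\nabla \vphi(y))$ for all $y \in K$, so that
\begin{equation*}
\big\langle (\nabla^2 \vphi(y))^{-1} \nabla f(y), \ \nabla f(y) \big\rangle = \big\langle \nabla^2 \psi(\nabla \vphi(y)) \, \nabla f(y), \ \nabla f(y) \big\rangle .
\end{equation*}
Since $\nabla^2 \psi$ is positive-definite, its operator norm is at most its trace, and thus $\langle \nabla^2 \psi(x) v, v \rangle \le (\Delta \psi(x)) \, |v|^2$ for all $x, v \in \RR^n$. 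Evaluating at $x = \nabla \vphi(y)$ and invoking Theorem \ref{thm2}, which gives $\Delta \psi \le 2 R^2(K)$ throughout $\RR^n$, I obtain $\big\langle (\nabla^2 \vphi(y))^{-1} \nabla f(y), \nabla f(y) \big\rangle \le 2 R^2(K) \, |\nabla f(y)|^2$ for every $y \in K$. Substituting this into the Poincar\'e inequality above produces exactly (\ref{eq_931}).

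\medskip
The main obstacle is not in this short deduction but in its two inputs. One is Theorem \ref{thm2}, whose proof is the main work of the paper. The other is that the Poincar\'e constant for $M_\mu$ is genuinely $1$: the Bakry-\'Emery lower bound $\mathrm{Ric}_\mu \ge \tfrac12 g$ of property (ii) gives on its own only the constant $2$, hence the weaker bound $4 R^2(K)$, so to reach $2 R^2(K)$ one needs, beyond the fact that linear functions are eigenfunctions with eigenvalue $-1$, the absence of spectrum of $-\Delta_{M_\mu}$ in $(0, 1)$ --- part of the spectral analysis of $M_\mu$ carried out below. Granting these two inputs, the remaining ingredients --- the bound $\langle A v, v \rangle \le (\mathrm{tr}\, A) |v|^2$ for positive-definite $A$, and the duality identity $(\nabla^2 \vphi)^{-1} = \nabla^2 \psi \circ \nabla \vphi$ --- are routine.
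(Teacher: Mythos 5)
Your proposal is correct and follows essentially the same route as the paper: the constant-one Poincar\'e inequality for $M_\mu$, i.e. $Var_\mu(f)\le\int_K \vphi^{ij}f_i f_j\,d\mu$, combined with the pointwise bound $(\nabla^2\vphi)^{-1}(y)=\nabla^2\psi(\nabla\vphi(y))\le \left(\Delta\psi\right)\cdot Id\le 2R^2(K)\cdot Id$ supplied by Theorem \ref{thm2}. The one point of provenance worth noting is that the paper does not obtain the constant-one inequality from a spectral analysis carried out in the text, but by quoting the classical Brascamp--Lieb inequality \cite{BL} on $M_\mu^*$ and transporting it to $M_\mu$ via the isomorphism $\nabla\psi$, so the input you leave as a black box is closed by a citation rather than by new work below.
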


The constant $2 R^2(K)$ on the right-hand side of (\ref{eq_931}) is not optimal.
In the case where $\mu$ is the uniform probability measure on a convex body $K \subset \RR^n$
with a central symmetry (i.e., $K = -K$),
the best possible constant is $4 R^2(K)/ \pi^2$, see Payne and Weinberger \cite{PW}.

\medskip Throughout this note, a convex body in $\RR^n$ is a bounded, open, convex set. We write $\log$ for the natural logarithm.
A smooth function or a smooth manifold are $C^{\infty}$-smooth. The unit sphere is $S^{n-1} = \{ x \in \RR^n ; |x| = 1 \}$.
The five sections below use a variety of techniques, from It\^o calculus to maximum principles. We tried
to make each section as independent of the others as possible.

\medskip
{\it Acknowledgements.} The author would like to thank Bo Berndtsson, Dario Cordero-Erausquin,
Ronen Eldan, Alexander Kolesnikov, Eveline Legendre, Emanuel Milman, Ron Peled, Yanir Rubinstein and Boris Tsirelson
for interesting discussions related to this work.
Supported
by a grant from the European Research Council.

\section{Continuity of the moment measure}
\label{sec_continuity}

This section is concerned with the continuity of the correspondence between convex functions
and their moment measures. Our main result here is Proposition \ref{prop_1130} below.
We say that a convex function $\psi: \RR^n \rightarrow \RR$ is {\it centered} if
\begin{equation}
 \int_{\RR^n} e^{-\psi(x)} dx = 1, \qquad \qquad \int_{\RR^n} x_i e^{-\psi(x)} dx = 0, \ i=1,\ldots,n. \label{eq_1037} \end{equation}
The role of the barycenter condition in (\ref{eq_1037})
is  to prevent translations of $\psi$ which result in the same moment measure.
It is well-known that any convex function $\psi: \RR^n \rightarrow \RR$ satisfying $\int e^{-\psi} = 1$
must tend to $+\infty$
at infinity. More precisely, for any such convex function $\psi$ there exist $A, B > 0$ with
\begin{equation}
 \psi(x) \geq A|x| - B \qquad \qquad (x \in \RR^n), \label{eq_904_}
 \end{equation}
see, e.g., \cite[Lemma 2.1]{K_psi}).

\begin{proposition}
Let $\Omega \subset \RR^n$ be a compact set, and let $\psi, \psi_1,\psi_2,\ldots: \RR^n \rightarrow \RR$
be centered, convex functions.
Denote by  $\mu, \mu_1,\mu_2,\ldots$ the corresponding moment measures, which are
assumed to  be supported in $\Omega$. Then the following are equivalent:
\begin{enumerate}
\item[(i)] $\displaystyle \psi_{\ell} \longrightarrow \psi$ pointwise in $\RR^n$.
\item[(ii)] $\displaystyle \mu_{\ell} \longrightarrow \mu$ weakly (i.e.,
$ \int b d \mu_{\ell} \rightarrow \int b d \mu $
for any continuous function $b: \Omega \rightarrow \RR$).
\end{enumerate} \label{prop_1130}
\end{proposition}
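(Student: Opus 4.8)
The plan is to prove the two implications separately, exploiting compactness and the rigidity of the correspondence $\psi \leftrightarrow \mu$. For the direction (i) $\Rightarrow$ (ii), suppose $\psi_\ell \to \psi$ pointwise. Since all the $\psi_\ell$ are convex and satisfy $\int e^{-\psi_\ell} = 1$, pointwise convergence of convex functions implies locally uniform convergence, and moreover the bound (\ref{eq_904_}) can be taken uniform along the sequence once $\ell$ is large (the constants $A, B$ depend only on $\psi$ in a stable way, since $\int e^{-\psi_\ell} = 1$ forces the $\psi_\ell$ not to ``escape to infinity''). This uniform coercivity gives a uniform integrable dominating function for $e^{-\psi_\ell}$, so by dominated convergence $e^{-\psi_\ell}(x)\,dx \to e^{-\psi}(x)\,dx$ in total variation. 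Now fix a continuous $b:\Omega\to\RR$, extend it to a bounded continuous function on $\RR^n$, and write $\int b\, d\mu_\ell = \int_{\RR^n} b(\nabla\psi_\ell(x)) e^{-\psi_\ell(x)}\,dx$. Since $\psi_\ell\to\psi$ locally uniformly and the functions are convex, $\nabla\psi_\ell \to \nabla\psi$ almost everywhere (at every point where $\psi$ is differentiable, which is a.e.). Combining a.e.\ convergence of $b(\nabla\psi_\ell)$ with the total-variation convergence of the measures $e^{-\psi_\ell}dx$ and boundedness of $b$ yields $\int b\,d\mu_\ell \to \int b\,d\mu$, which is (ii).

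For (ii) $\Rightarrow$ (i), I would argue by compactness and uniqueness. First establish that the family $\{\psi_\ell\}$ is precompact in the topology of pointwise (equivalently, locally uniform) convergence on $\RR^n$. The key quantitative input is that, since $\mu_\ell$ are all supported in the fixed compact set $\Omega$, one gets uniform upper and lower bounds on the $\psi_\ell$ on compact sets: the normalization $\int e^{-\psi_\ell}=1$ together with $\int x\, e^{-\psi_\ell}dx = 0$ prevents the functions from drifting, while support of the moment measure in $\Omega$ controls the growth of $\nabla\psi_\ell$ and hence the linear growth rate of $\psi_\ell$ from above, and (\ref{eq_904_}) with uniform constants controls it from below. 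These bounds, via a Blaschke-type selection principle for convex functions, give a subsequence $\psi_{\ell_k} \to \tilde\psi$ locally uniformly, with $\tilde\psi$ convex and still centered (the centering conditions pass to the limit by the uniform coercivity and dominated convergence as above). By the already-proven implication (i) $\Rightarrow$ (ii), the moment measure of $\tilde\psi$ is the weak limit of $\mu_{\ell_k}$, which is $\mu$. By the uniqueness part of the Cordero-Erausquin--Klartag theorem \cite{CK} (uniqueness of the centered representative), $\tilde\psi = \psi$. Since every subsequence has a further subsequence converging to the same limit $\psi$, the whole sequence $\psi_\ell$ converges to $\psi$, giving (i).

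The main obstacle is the compactness step in (ii) $\Rightarrow$ (i): one must show that weak convergence of the moment measures, together with the centering and support constraints, produces genuinely uniform a priori bounds on the $\psi_\ell$ over every compact set --- both from above and from below --- so that no mass of $e^{-\psi_\ell}dx$ escapes to infinity and no subsequential limit degenerates (e.g.\ becomes $+\infty$ on a half-space or loses the normalization $\int e^{-\tilde\psi}=1$). Concretely, I expect to need: (a) a lower bound $\psi_\ell \geq A|x| - B$ with $A,B$ independent of $\ell$, which follows from \cite[Lemma 2.1]{K_psi} once we know $\int e^{-\psi_\ell}=1$ and have a uniform upper bound on $\psi_\ell$ at a single point; (b) an upper bound: because $\nabla\psi_\ell(\RR^n)\subset \overline{\Omega}$ (the essential image of the gradient is the support of the moment measure, contained in $\Omega$), each $\psi_\ell$ is $R(\Omega)$-Lipschitz, so $\psi_\ell(x) \le \psi_\ell(0) + R(\Omega)|x|$, and $\psi_\ell(0)$ is controlled because $\int e^{-\psi_\ell}=1$ forces $\psi_\ell$ to take moderate values somewhere, which combined with the Lipschitz bound and the barycenter condition pins it down. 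Once these uniform bounds are in hand, tightness of $\{e^{-\psi_\ell}dx\}$ and Arzel\`a--Ascoli finish the compactness, and the rest is the soft uniqueness argument above.
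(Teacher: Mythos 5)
Your direction (i) $\Rightarrow$ (ii) is essentially the paper's argument (locally uniform convergence of convex functions, a.e.\ convergence of gradients, and a uniform exponential lower bound on $\psi_\ell$ to justify dominated convergence), and your overall architecture for (ii) $\Rightarrow$ (i) --- compactness via Arzel\`a--Ascoli, passing the centering to the limit, invoking the already-proven implication and the uniqueness part of \cite{CK}, then a subsequence argument --- is also the paper's. But there is a genuine gap in the compactness step, precisely at the point you flag as the main obstacle. Your items (a) and (b) claim that the uniform two-sided bounds on $\psi_\ell$ follow from: normalization $\int e^{-\psi_\ell}=1$, the barycenter condition, and the $L$-Lipschitz bound coming from $\nabla \psi_\ell \in \Omega$. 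This is false: those three constraints do not control $\psi_\ell(0)$ from above, nor do they give a lower bound $\psi_\ell \geq A|x|-B$ with uniform constants, because they do not prevent the functions from \emph{flattening} in some direction. For example, in $\RR^2$ take
$$ \psi_\ell(x,y) = \frac{|x|}{\ell} + |y| + \log (4 \ell). $$
Each $\psi_\ell$ is centered (symmetric and normalized), $2$-Lipschitz, and its moment measure is supported in the fixed compact set $[-1,1]^2$; yet $\psi_\ell(0) = \log(4\ell) \to \infty$ and the coercivity constant in the $x$-direction is $1/\ell \to 0$, so no subsequence is locally bounded. In this example $\mu_\ell$ converges to a measure supported in the line $\{x=0\}$, which is not a moment measure --- and that is exactly the point: the only thing ruling out such degeneration is hypothesis (ii) itself, i.e.\ the weak convergence to $\mu$, which, being the moment measure of a finite convex function, cannot be supported in a hyperplane (\cite[Proposition 1]{CK}). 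Your sketch never uses (ii) quantitatively in deriving the a priori bounds, so as written the compactness step fails.

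The missing ingredient is the paper's Step 1 and Lemma \ref{lem_1803}: from $\mu_\ell \to \mu$ weakly and the non-degeneracy of $\mu$ one first extracts $\eps_0>0$ and $\ell_0$ with
$$ \int_{\Omega} |x \cdot \theta| \, d\mu_{\ell}(x) = \int_{\RR^n} |\nabla \psi_{\ell} \cdot \theta| \, e^{-\psi_{\ell}} \geq \eps_0 \qquad (\ell \geq \ell_0, \ \theta \in S^{n-1}), $$
and only this directional lower bound (combined with the $L$-Lipschitz property and the normalization, via an argument bounding the chords of the sublevel set $K(\psi_\ell)$ as in Lemma \ref{lem_1803}, or via Lemma \ref{lem_1408}) yields the uniform estimate $\alpha|x|-\beta \leq \psi_\ell(x) \leq L|x| + \gamma$. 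Once you insert this step, the rest of your plan (tightness, Arzel\`a--Ascoli, centering of the limit by dominated convergence, uniqueness from \cite{CK}) goes through as in the paper. Relatedly, in (i) $\Rightarrow$ (ii) your assertion that the constants in (\ref{eq_904_}) are automatically uniform ``since $\int e^{-\psi_\ell}=1$'' needs the locally uniform convergence to $\psi$ to be made precise (the paper does this by showing $K(\psi_\ell) \subseteq K$ for large $\ell$); normalization alone would again be insufficient, for the same reason as above.
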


Several lemmas are required for the proof of Proposition \ref{prop_1130}.
For a centered, convex function $\psi: \RR^n \rightarrow \RR$ we define
$$ K(\psi) = \left \{ x \in \RR^n \, ; \, \psi(x) \leq 2n + \inf_{y \in \RR^n} \psi(y) \right \}, $$
a convex set in $\RR^n$. Since the barycenter of $e^{-\psi(x)} dx$ lies at the origin, then $\psi(0) \leq n + \inf_{x \in \RR^n} \psi(x)$,
according to Fradelizi \cite{fradelizi}. Hence the origin is necessarily in the interior of $K(\psi)$. For $x \in \RR^n$ consider the Minkowski functional
$$ \| x \|_{\psi} = \inf \left \{ \lambda > 0 ; x / \lambda \in K(\psi) \right \}. $$
Since a convex function is continuous, then $\psi(x / \| x \|_{\psi}) = 2n + \inf \psi$ for any $0 \neq x \in \RR^n$.
The following lemma is well-known, but nevertheless its proof is provided for completeness.

\begin{lemma} Let $\psi: \RR^n \rightarrow \RR$ be a centered, convex function. Then,
\begin{equation}
\psi(x) \geq n \| x \|_{\psi} + \psi(0) - 2n \quad \quad \quad \quad (x \in \RR^n). \label{eq_2059}
\end{equation}
 \label{lem_1408}
\end{lemma}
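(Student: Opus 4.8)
The plan is to exploit convexity of $\psi$ together with the boundary identity $\psi(x/\|x\|_\psi) = 2n + \inf \psi$ that was just recorded. Fix $0 \neq x \in \RR^n$ and write $\lambda = \|x\|_\psi > 0$, so that the point $z = x/\lambda$ lies on the boundary of $K(\psi)$ and satisfies $\psi(z) = 2n + \inf \psi$. There are two regimes. If $\lambda \leq 1$, then $x$ lies in $K(\psi)$ (a closed convex set containing the origin), and I would simply note that the claimed bound $\psi(x) \geq n\lambda + \psi(0) - 2n$ follows from $\psi(x) \geq \inf \psi$ together with $\psi(0) \leq n + \inf\psi$ (Fradelizi), since then $n\lambda + \psi(0) - 2n \leq n + (n + \inf \psi) - 2n = \inf\psi \leq \psi(x)$. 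So the substance is the regime $\lambda \geq 1$.

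For $\lambda \geq 1$, I would apply convexity along the ray through the origin and $x$. Write $z = x/\lambda$ as a convex combination: $z = \tfrac{1}{\lambda} x + (1 - \tfrac{1}{\lambda}) \cdot 0$. Convexity of $\psi$ gives
$$ \psi(z) \leq \frac{1}{\lambda}\psi(x) + \left(1 - \frac{1}{\lambda}\right)\psi(0). $$
Rearranging, and using $\psi(z) = 2n + \inf\psi$,
$$ \psi(x) \geq \lambda \psi(z) - (\lambda - 1)\psi(0) = \lambda(2n + \inf\psi) - (\lambda-1)\psi(0). $$
Now I would invoke Fradelizi's inequality $\inf \psi \geq \psi(0) - n$ to replace $\inf\psi$:
$$ \psi(x) \geq \lambda(2n + \psi(0) - n) - (\lambda - 1)\psi(0) = \lambda(n + \psi(0)) - (\lambda-1)\psi(0) = n\lambda + \psi(0), $$
which is in fact stronger than the asserted bound $n\lambda + \psi(0) - 2n$ in this regime, so it certainly implies it. Combining the two regimes yields (\ref{eq_2059}) for all $x$, with the case $x = 0$ being trivial.

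I do not anticipate a serious obstacle here: the argument is a one-dimensional convexity estimate along rays, and the only external input is Fradelizi's bound $\psi(0) \leq n + \inf\psi$, which is already cited in the text preceding the lemma. The one point requiring a little care is the bookkeeping of which direction each inequality goes when $\lambda < 1$ versus $\lambda \geq 1$, and making sure the uniform $-2n$ correction term is chosen so that both cases close; as shown above, the $\lambda \geq 1$ case actually has room to spare, so the constant is comfortable. An alternative, essentially equivalent route would be to use the radial monotonicity of $\psi$ along rays from an interior point together with the $1$-homogeneity of $\|\cdot\|_\psi$, but the convex-combination computation above is the cleanest.
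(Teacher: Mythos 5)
Your proposal is correct and follows essentially the same route as the paper: split according to whether $\|x\|_{\psi}\leq 1$ or $\|x\|_{\psi}\geq 1$, use Fradelizi's bound $\psi(0)\leq n+\inf\psi$ in the first case, and in the second case apply convexity along the segment from $0$ to $x$ through the boundary point $x/\|x\|_{\psi}$, arriving at the same intermediate estimate $\psi(x)\geq \psi(0)+n\|x\|_{\psi}$ outside $K(\psi)$.
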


\begin{proof} Since the barycenter of $e^{-\psi(x)} dx$ lies at the origin,
\begin{equation}
\psi(0) \leq n + \inf_{x \in \RR^n} \psi(x). \label{eq_1012} \end{equation}
Whenever $x \in K(\psi)$ we have $\| x \|_{\psi} \leq 1$.
 Therefore (\ref{eq_2059}) follows from (\ref{eq_1012}) for $x \in K(\psi)$.
In order to prove (\ref{eq_2059}) for $x \not \in K(\psi)$, we observe that for such $x$ we have $\| x \|_{\psi} \geq 1$ and hence
$$ \psi(0) + n \leq \inf_{y \in \RR^n} \psi(y) + 2n = \psi \left(\frac{x}{\| x \|_{\psi}} \right)
\leq \left( 1- \frac{1}{\| x \|_{\psi}} \right) \cdot \psi(0) + \frac{1}{\| x\|_{\psi}} \cdot \psi(x), $$
due to the convexity of $\psi$. We conclude that $\psi(x) \geq \psi(0) + n \| x \|_{\psi}$ for any $x \not \in K(\psi)$,
and (\ref{eq_2059}) is proven in all cases.
\end{proof}

\begin{proof}[Proof of the direction $\text{(i)} \Rightarrow \text{(ii)}$ in Proposition \ref{prop_1130}]
Denote $$ K = \{ x\in \RR^n ; \psi(x) < 2n+1 + \psi(0) \}, $$ an open, convex set containing the origin. Since $e^{-\psi}$
is integrable, then $K$ must be of finite volume, hence bounded. According to Rockafellar \cite[Theorem 10.8]{roc}, the convergence of $\psi_{\ell}$
to $\psi$ is locally uniform in $\RR^n$. In particular, the convergence is uniform on $K$. Setting $M = \psi(0) - 1$
we conclude that there exists $\ell_0 \geq 1$ such that
\begin{equation}
K(\psi_{\ell}) \subseteq K, \quad \psi_{\ell}(0) \geq M \quad \quad \quad \quad \text{for all} \ \ell \geq \ell_0. \label{eq_901}
\end{equation}
Denote $R = \sup_{x \in K} |x|$. From (\ref{eq_901}) and Lemma \ref{lem_1408}, for any $\ell \geq \ell_0$,
\begin{equation}
\psi_{\ell}(x) \geq n \| x \|_{\psi_{\ell}} + \psi_{\ell}(0) - 2n \geq \frac{n}{R} |x| + (M -2n) \quad  \quad \qquad (x \in \RR^n).
\label{eq_1504_} \end{equation}
According to  our assumption (i) and \cite[Theorem 24.5]{roc} we have that
$$ \nabla \psi_{\ell}(x) \stackrel{\ell \rightarrow \infty}\longrightarrow \nabla \psi(x) $$
for any $x \in \RR^n$ in which $\psi, \psi_1,\psi_2,\ldots$ are differentiable.
Let $b: \Omega \rightarrow \RR$ be a continuous function.
Since a convex function is differentiable almost everywhere, we conclude that
$$  b(\nabla \psi_{\ell}(x)) e^{-\psi_\ell(x)} \stackrel{\ell \rightarrow \infty} \longrightarrow
b(\nabla \psi(x)) e^{-\psi(x)}   \quad \text{for almost any} \ x \in \RR^n.
$$
The function $b$ is bounded because $\Omega$ is compact.
We may use the dominated convergence theorem, thanks to (\ref{eq_1504_}), and conclude that
$$ \int_{\Omega} b d \mu_{\ell} = \int_{\RR^n}  b(\nabla \psi_{\ell}(x)) e^{-\psi_\ell(x)} dx \stackrel{\ell \rightarrow \infty} \longrightarrow
\int_{\RR^n} b(\nabla \psi(x)) e^{-\psi(x)} dx = \int_{\Omega} b d \mu.
$$
Thus (ii) is proven.
\end{proof}

It still remains to prove the direction $\text{(ii)} \Rightarrow \text{(i)}$ in Proposition \ref{prop_1130}.
A function $f: \RR^n \rightarrow \RR$ is $L$-Lipschitz if $|f(x) - f(y)| \leq L|x-y|$ for any $x,y \in \RR^n$.

\begin{lemma} Let $L, \eps > 0$.
Suppose that $\psi: \RR^n \rightarrow \RR$ is a centered, $L$-Lipschitz, convex function, such that
\begin{equation}
 \int_{\RR^n} |\nabla \psi(x) \cdot \theta| e^{-\psi(x)} dx \geq \eps \quad \quad \quad \quad \text{for all} \ \theta \in S^{n-1}.
 \label{eq_1026} \end{equation}
Then,
\begin{equation}
\alpha |x| - \beta \leq \psi(x) \leq L |x| + \gamma \quad \quad \quad \quad (x \in \RR^n),\label{eq_1418}
  \end{equation}
where $\alpha, \beta, \gamma > 0$ are constants depending only
on $L, \eps$ and $n$.
 \label{lem_1803}
\end{lemma}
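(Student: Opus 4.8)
The upper bound in (\ref{eq_1418}) is immediate: an $L$-Lipschitz function satisfies $\psi(x) \le \psi(0) + L|x|$, so we may take $\gamma = \psi(0)$; we still need to bound $\psi(0)$ above in terms of $L, \eps, n$, but since $\int e^{-\psi} = 1$ and $\psi$ is $L$-Lipschitz, the density $e^{-\psi}$ is at least $e^{-\psi(0) - L|x|}$, and integrating forces $e^{-\psi(0)} \le (L/c_n)^n$ for a dimensional constant, giving $\psi(0) \ge -n\log(c_n/L)$ — but what we actually want is $\psi(0)$ bounded \emph{above}, which follows because $\int e^{-\psi} = 1$ and $\psi(x) \le \psi(0) + L|x|$ imply $1 \le e^{-\psi(0)} \int e^{-L|x|}dx$, i.e. $\psi(0) \le n\log(C_n/L)$ or so; in any case $\gamma$ depends only on $L$ and $n$.

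The substance is the lower bound $\psi(x) \ge \alpha|x| - \beta$. By (\ref{eq_904_}) we already know \emph{some} linear lower bound exists, but its constants are not controlled by $L, \eps, n$ alone; the point of hypothesis (\ref{eq_1026}) is to provide that control. The plan is to exploit the Minkowski-functional machinery already set up: by Lemma \ref{lem_1408} it suffices to bound the outer radius $R = \sup_{x \in K(\psi)} |x|$ of the convex body $K(\psi)$ from above by a constant depending only on $L, \eps, n$, since then (\ref{eq_2059}) reads $\psi(x) \ge (n/R)|x| + \psi(0) - 2n$, and $\psi(0) - 2n$ is itself bounded below (again because $\int e^{-\psi} = 1$ with $\psi$ $L$-Lipschitz forces $\inf \psi$, hence $\psi(0)$ via (\ref{eq_1012}), to be bounded below by a constant depending on $L, n$). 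So the whole lemma reduces to: \emph{bound the diameter of $K(\psi)$.}

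To bound $R$, I would argue by contradiction / directly via the hypothesis (\ref{eq_1026}). Suppose $K(\psi)$ contains a point $x_0$ with $|x_0|$ large. Write $\theta = x_0/|x_0|$. The moment measure $\mu$ is the push-forward of $e^{-\psi}dx$ under $\nabla \psi$, and $\mu$ is supported in the ball of radius $L$ (since $|\nabla \psi| \le L$ everywhere), so $\int |\nabla\psi(x)\cdot\theta| e^{-\psi}dx = \int_{\RR^n} |y\cdot\theta|\, d\mu(y) \le L$; that only gives an upper bound, not the needed lower bound. The correct direction: I want to show that if $K(\psi)$ is very elongated in the direction $\theta$, then $\mu$ is concentrated near the hyperplane $\{y \cdot \theta = 0\}$, contradicting (\ref{eq_1026}). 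Indeed, on the slab $\{|x\cdot\theta| \le t\} \cap K(\psi)$, which by elongation captures most of the mass of $e^{-\psi}dx$ (here one uses that $\psi$ grows like $(n/R)\,$(Minkowski functional) by Lemma \ref{lem_1408}, so most mass sits in a bounded dilate of $K(\psi)$, which is thin in the $\theta$ direction relative to its extent), the gradient component $\nabla\psi \cdot \theta$ is small because $\psi$ varies little in the $\theta$-direction across such a thin slab — more precisely, for a convex function whose graph over a segment of length $\ell$ in direction $\theta$ rises by at most $O(1)$ (which holds inside $K(\psi)$, where $\psi$ oscillates by at most $2n$), the directional derivative $|\nabla\psi\cdot\theta|$ is $O(1/\ell)$ on the middle portion of that segment, by convexity (the slope is monotone, and its total variation is bounded by the rise). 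Integrating, $\int |\nabla\psi\cdot\theta| e^{-\psi} \lesssim 1/R \to 0$ as $R \to \infty$, contradicting (\ref{eq_1026}) once $R$ exceeds a threshold depending only on $L, \eps, n$. (The Lipschitz bound $L$ enters in controlling the contribution of the mass \emph{outside} the central slab, and $n$ enters through the mass estimate of Lemma \ref{lem_1408} and Fradelizi's inequality (\ref{eq_1012}).)

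The main obstacle is making the convexity-plus-thinness estimate on $|\nabla\psi\cdot\theta|$ honest: one must split $\RR^n$ into the region where $\|x\|_\psi$ is $O(1)$ (where the thin-slab argument applies, using that $\psi$ is nearly constant along $\theta$ because $K(\psi)$ has small width in that direction once $R$ is large — though here one must be careful, since $K(\psi)$ could be large in \emph{all} directions, so "thinness" is really "most of the $e^{-\psi}$-mass lives where $\|x\|_\psi \le $ const, hence $|x\cdot\theta| \le $ const $\cdot w(\theta)$ where $w(\theta)$ is the width, and we run the argument in the direction $\theta$ \emph{achieving the diameter}, where the segment through $x_0$ inside $K(\psi)$ has length $\gtrsim R$ while $\psi$ rises by $\le 2n$") and the complementary tail region (controlled by the $L$-Lipschitz exponential bound $e^{-\psi(x)} \le e^{-\psi(0)}e^{-(n/R)\|x\|_\psi \cdot R \cdots}$ — i.e. directly by Lemma \ref{lem_1408}). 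Getting the bookkeeping so that the dominant term is the $1/R$ one, uniformly in $\psi$, is the delicate point; everything else is routine.
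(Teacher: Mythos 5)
Your reduction (bound the size of $K(\psi)$, then invoke Lemma \ref{lem_1408} and integrate to control $\psi(0)$) matches the paper's structure, but the mechanism you propose for the key step does not work as stated. The convexity estimate ``$\psi$ rises by at most $2n$ along a segment of length $\gtrsim R$, hence $|\nabla\psi\cdot\theta|=O(1/R)$'' is only available at points that actually lie on (or can be joined by such a segment to) the long chord through the origin: for instance, for $x$ with $2x\in K(\psi)$ the midpoint of $2x$ and $x_0=R\theta$ lies in $K(\psi)$, giving the one-sided bound $\partial_\theta\psi(x)\le 4n/R$; but outside such a central region there is in general no long segment in direction $\theta$ along which $\psi$ has bounded oscillation ($\psi$ may climb with slope up to $L$ immediately in the $\theta$-direction at off-axis points of $K(\psi)$, and certainly at points of a dilate $\lambda K(\psi)$ with $\lambda>1$), so the only pointwise bound there is $|\nabla\psi\cdot\theta|\le L$. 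Since the central region where your estimate applies carries only a fixed dimension-dependent fraction of the mass of $e^{-\psi}$ (not $1-o_R(1)$), the resulting inequality is of the form $\eps\le C_n/R + C\,L$, which gives no contradiction for large $R$; this is not a bookkeeping issue but a missing idea. (Your intermediate claim that the slab $\{|x\cdot\theta|\le t\}$ captures most of the mass is also backwards: elongation of $K(\psi)$ in direction $\theta$ spreads the mass out along $\theta$.) The paper's proof supplies exactly the missing ingredient by working with averages along fibers rather than pointwise bounds: Fubini plus the unimodality of $t\mapsto e^{-\psi(y+t\theta)}$ give the identity $\int|\nabla\psi\cdot\theta|e^{-\psi}=2\int_H e^{-m_\theta}$ (see (\ref{eq_1134_})), and the midpoint trick (\ref{eq_938}) --- combining each fiber, taken at its minimum point, with the long chord $I_\theta$ through the origin --- shows that every halved fiber has $\int e^{-\psi(\frac y2+t\theta)}dt\ge\frac12 e^{-m_\theta(y)}e^{-2n}|I_\theta|$, whence $\int_H e^{-m_\theta}\le 2^n e^{2n}/|I_\theta|$ and $|I_\theta|\le C_n/\eps$ for every $\theta$. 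That transfer of the chord length to all fibers is what your sketch lacks.

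A secondary error: your treatment of $\gamma$ is wrong. From $\psi(x)\le\psi(0)+L|x|$ one gets $1=\int e^{-\psi}\ge e^{-\psi(0)}\int e^{-L|x|}dx$, which is a \emph{lower} bound on $\psi(0)$, not an upper bound; and in fact no upper bound on $\psi(0)$ depending only on $L,n$ exists (take $\psi$ equal to a huge constant on an enormous ball and growing with slope $L$ outside --- centered, $L$-Lipschitz, normalized, with $\psi(0)$ arbitrarily large; hypothesis (\ref{eq_1026}) is what rules this out). As in the paper, the upper bound $\psi(x)\le L|x|+\gamma$ must be derived \emph{after} the $\eps$-dependent lower bound, by integrating $\psi(x)\ge\psi(0)-2n+\frac{\eps}{\tilde C_n}|x|$ against $e^{-\psi}=1$ to get $\psi(0)\le\gamma$ with $\gamma=\gamma(\eps,n)$; so $\gamma$ necessarily depends on $\eps$, not only on $L$ and $n$.
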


\begin{proof} Fix $\theta \in S^{n-1}$ and
 set $H = \theta^{\perp}$, the hyperplane orthogonal to $\theta$. The function
 $$ m_{\theta}(y) = \inf_{t \in \RR} \psi(y + t \theta) \quad \quad \quad \quad (y \in H) $$
 is convex.
 Furthermore, for any fixed $y \in H$, the function $t \mapsto \psi(y + t \theta)$
 is convex, $L$-Lipschitz and tends to $+\infty$ as $t \rightarrow \pm \infty$. Hence the one-dimensional
 convex function $t \mapsto \psi(y + t \theta)$ attains its minimum at a certain point $t_0 \in \RR$, is non-decreasing on $[t_0, +\infty)$ and non-increasing
 on $(-\infty, t_0]$. Therefore, for any $y \in H$,
 $$ \int_{-\infty}^{\infty} \left| \frac{\partial \psi(y + t \theta)}{\partial t}  \right| e^{-\psi(y + t \theta)} dt = \int_{-\infty}^{\infty} \left| \frac{\partial}{\partial t} e^{-\psi(y + t \theta)} \right| dt  = 2 e^{-m_{\theta}(y)}. $$
We now integrate over $y \in H$ and use Fubini's theorem to conclude that
\begin{equation}
\int_{\RR^n} |\nabla \psi(x) \cdot \theta| e^{-\psi(x)} dx = 2 \int_H e^{-m_{\theta}(y)} dy. \label{eq_1134_}
\end{equation}
 Consider the interval
\begin{equation}
 I_{\theta} = \left \{ t \in \RR \, ; \, t \theta \in K(\psi) \right \}. \label{eq_1410}
 \end{equation}
Then,
\begin{equation}
\int_{-\infty}^{\infty} e^{-\psi \left( t\theta \right) /2} dt \geq
\int_{I_{\theta}} e^{-\psi \left( t\theta \right) /2} dt \geq
e^{-n-\frac{m_{\theta}(0)}{2}}  |I_{\theta}|
\label{eq_1157_}
\end{equation}
 where $|I_{\theta}|$ is the length of the interval $I_{\theta}$.
 Fix a point $y \in H$. Then there exists $t_0 \in \RR$ for which $m_{\theta}(y) = \psi(y + t_0 \theta)$.
From (\ref{eq_1157_}) and from the convexity of $\psi$,
\begin{align} \nonumber
\int_{-\infty}^{\infty}  e^{ -\psi \left(\frac{y}{2} + t \theta
\right) } dt & = \frac12 \int_{-\infty}^{\infty} e^{ -\psi \left( \frac{y + t_0 \theta}{2}+ \frac{t\theta}{2} \right) } dt
 \, \geq \, \frac12 e^{-\frac{m_{\theta}(y)}{2}}
\int_{-\infty}^{\infty} e^{-\frac{\psi \left( t\theta \right)}{ 2}} dt \\ &
\geq \frac12 e^{-\frac{m_{\theta}(y) + m_{\theta}(0)}{2}} e^{-n} |I_{\theta}|
\, \geq \, \frac12 e^{-m_{\theta}(y)} e^{-2n} |I_{\theta}|, \label{eq_938}
\end{align}
 where in the last passage we used the fact that $m_{\theta}(0) \leq \psi(0) \leq n + \inf \psi \leq n + m_{\theta}(y)$,
  because the barycenter of $e^{-\psi(x)} dx$ lies at the origin.
 Integrating (\ref{eq_938}) over $y \in H$, we see that
 $$ \int_{H} e^{-m_{\theta}(y)} dy \leq \frac{2e^{2n}}{|I_\theta|} \int_{H}
 \int_{-\infty}^{\infty} e^{-\psi \left(\frac{y}{2} + t \theta
 \right) }
dt dy = \frac{2^n e^{2n}}{|I_\theta|} \int_{\RR^n} e^{-\psi} = \frac{2^n e^{2n}}{|I_\theta|}. $$
Combine the last inequality with (\ref{eq_1026}) and (\ref{eq_1134_}). This leads to the bound
\begin{equation}
 |I_\theta| \leq C_n \left( \int_{\RR^n} |\nabla \psi(x) \cdot \theta| e^{-\psi(x)} dx \right)^{-1} \leq \frac{C_n}{\eps}, \label{eq_1411_}
 \end{equation}
 for some constant $C_n$ depending only on $n$. Recall that the origin belongs to $K(\psi)$
 and hence $0 \in I_{\theta}$.
 By letting $\theta$ range over all of $S^{n-1}$ and glancing at (\ref{eq_1410}) and (\ref{eq_1411_}), we see that
 \begin{equation}
  K(\psi) \subseteq  B \left(0, C_n / {\eps} \right) \label{eq_2112_}
  \end{equation}
 where $B(x,r) = \{ y \in \RR^n ; |y-x| \leq r \}$. From (\ref{eq_2112_}) and from Lemma \ref{lem_1408},
 \begin{equation}
 \psi(x) \geq \psi(0) - 2n + n \| x \|_{\psi} \geq \psi(0) - 2n + \frac{\eps}{\tilde{C}_n} |x|
 \qquad \qquad (x \in \RR^n),
 \label{eq_2112} \end{equation}
 for $\tilde{C}_n = C_n / n$. By integrating (\ref{eq_2112}) we obtain
 $$ 1 = \int_{\RR^n} e^{-\psi} \leq e^{-(\psi(0) - 2n)} \int_{\RR^n} e^{-\eps |x| / \tilde{C}_n} dx . $$
Therefore, $\psi(0) \leq \gamma$ for $\gamma = 2n + \log(\int_{\RR^n} e^{-\eps |x| / \tilde{C}_n} dx)$.
Since $\psi$ is $L$-Lipschitz, then the right-hand side inequality of (\ref{eq_1418}) follows. Next, observe that
$$
 1 = \int_{\RR^n} e^{-\psi(x)} dx \geq \int_{\RR^n} e^{-\psi(0) - L |x|} dx = e^{-\psi(0)} \int_{\RR^n} e^{ - L |x|} dx.
 $$
Hence $\psi(0) \geq \log(\int_{\RR^n} e^{-L |x|} dx)$,
and the left-hand side inequality of (\ref{eq_1418}) follows from (\ref{eq_2112}).
\end{proof}

\begin{proof}[Proof of the direction $\text{(ii)} \Rightarrow \text{(i)}$ in Proposition \ref{prop_1130}] $ $

\medskip {\bf Step 1.} We claim that
\begin{equation}
\liminf_{\ell \rightarrow \infty} \left( \inf_{\theta \in S^{n-1}} \int_{\Omega} |x \cdot \theta| d \mu_{\ell}(x) \right) > 0.
\label{eq_1724} \end{equation}
Assume that (\ref{eq_1724}) fails. Then there exist sequences $\ell_j \in \NN$ and $\theta_j \in S^{n-1}$ such that
\begin{equation}
 \lim_{j \rightarrow \infty} \int_{\Omega} |x \cdot \theta_{j}| d \mu_{\ell_j}(x) = 0.\label{eq_2159}
 \end{equation}
Passing to a subsequence, if necessary, we may assume that $\theta_j \longrightarrow \theta_0 \in S^{n-1}$.
The sequence
of functions $|x \cdot \theta_{j}|$ tends to $|x \cdot \theta_0|$ uniformly in $x \in \Omega$. Hence, from (ii) and (\ref{eq_2159}),
$$
 \int_{\Omega} |x \cdot \theta_0| d \mu(x) = \lim_{j \rightarrow \infty} \int_{\Omega} |x \cdot \theta_{0}| d \mu_{\ell_j}(x) = \lim_{j \rightarrow \infty} \int_{\Omega} |x \cdot \theta_{j}| d \mu_{\ell_j}(x) = 0.
 $$
Therefore $\mu$ is supported in the hyperplane $\theta_0^{\perp}$. However, $\mu$ is the moment measure of the convex
function $\psi: \RR^n \rightarrow \RR$,
and according to \cite[Proposition 1]{CK}, it cannot be supported in a hyperplane. We have thus arrived at a contradiction, and (\ref{eq_1724}) is proven.

\medskip  {\bf Step 2.} We will prove that there exist $\alpha, \beta, \gamma > 0$ and $\ell_0 \geq 1$ such that
\begin{equation}
\alpha |x| - \beta \leq \psi_{\ell}(x) \leq L |x| + \gamma \quad \quad \quad \quad (\ell \geq \ell_0, x \in \RR^n). \label{eq_1803}
  \end{equation}
Indeed, according to Step 1, there exists $\ell_0 \geq 1$ and $\eps_0 > 0$ such that
\begin{equation}
\int_{\RR^n} |\nabla \psi_{\ell}(x) \cdot \theta| e^{-\psi_{\ell}(x)} dx = \int_{\Omega} |x \cdot \theta| d \mu_{\ell}(x) > \eps_0 \quad \quad (\ell \geq \ell_0, \theta \in S^{n-1}). \label{eq_1801}
\end{equation}
Denote $L = \sup_{x \in \Omega} |x|$. The function $\psi_{\ell}$ is centered and convex. Furthermore, for almost any $x \in \RR^n$
we know that $\nabla \psi_{\ell}(x) \in \Omega$, because the moment measure of $\psi_{\ell}$ is supported in $\Omega$. Hence, for $\ell \geq 1$,
\begin{equation}
 |\nabla \psi_{\ell}(x)| \leq L \qquad \qquad \text{for almost any} \ x \in \RR^n. \label{eq_1031} \end{equation}
Since a convex function is always locally-Lipschitz, then (\ref{eq_1031}) implies that $\psi_{\ell}$ is $L$-Lipschitz,
for any $\ell$. We may now apply Lemma \ref{lem_1803}, thanks to (\ref{eq_1801}), and conclude
(\ref{eq_1803}).

\medskip {\bf Step 3.} Assume by contradiction that there exists $x_0 \in \RR^n$ for
which $\psi_{\ell}(x_0)$ does not converge to $\psi(x_0)$. Then there exist $\eps > 0$ and a subsequence $\ell_j$ such that
\begin{equation}
|\psi_{\ell_j}(x_0) - \psi(x_0)| \geq \eps \quad \quad \quad \quad (j=1,2,\ldots).
\label{eq_1819}
\end{equation}
From (\ref{eq_1803}) we know that the sequence of functions $\{ \psi_{\ell_j} \}_{j=1,2,\ldots}$ 
is uniformly bounded on any compact subset of $\RR^n$.  Furthermore, $\psi_{\ell_j}$ is $L$-Lipschitz for any $j$.
According to the Arzel\'a-Ascoli theorem, we may pass to a subsequence and assume that $\psi_{\ell_j}$ converges locally uniformly in $\RR^n$,
to a certain function $F$. The function $F$ is convex and $L$-Lipschitz, as it is the limit of convex and $L$-Lipschitz functions.
Furthermore, thanks to (\ref{eq_1803}) we may apply the dominated convergence theorem and conclude that $F$ is centered.

\medskip To summarize, the functions $F, \psi_{\ell_1}, \psi_{\ell_2},\ldots$ are $L$-Lipschitz, centered and convex.
We know that $\psi_{\ell_j} \longrightarrow F$ locally uniformly in $\RR^n$. According to the implication
$\text{(i)} \Rightarrow \text{(ii)}$ proven above, we know that $\mu_{\ell_j}$ converges weakly to the moment
measure of $F$. But we assumed that $\mu_{\ell_j}$ converges weakly to $\mu$, and hence $\mu$ is the moment
measure of $F$. Thus $\psi,F: \RR^n \rightarrow \RR$ are two centered, convex functions with the same moment
measure $\mu$. This means that $\psi \equiv F$, according to the uniqueness part in \cite{CK}. Therefore $\psi_{\ell_j} \longrightarrow
\psi$ pointwise in $\RR^n$, in contradiction to (\ref{eq_1819}), and the proof is complete.
\end{proof}

\section{A preliminary weak bound using the maximum principle}
\label{sec_apriori}

In this section we prove a rather weak form of Theorem \ref{thm2}, which will be needed for
the proof of the theorem later on in Section \ref{sec_BE}. Throughout
this section, $\mu$ is a log-concave probability measure on $\RR^n$ with barycenter at the origin, supported on a convex
body $K \subset \RR^n$, with density $e^{-\rho}$ satisfying the regularity conditions (\arabic{eq_935}).
Also, $\psi: \RR^n \rightarrow \RR$ is the smooth, convex function whose moment measure is $\mu$, which is uniquely defined
up to translation, and $\vphi = \psi^*$ is its Legendre transform.
In this section we make the following strict-convexity assumptions:
\begin{enumerate}
\item[($\star$)] The convex body $K$ has a smooth boundary
and its Gauss curvature is positive everywhere. Additionally, there exists $\eps_0 > 0$ with
\begin{equation}  \nabla^2 \rho(x)  \geq \eps_0 \cdot Id \quad \quad \quad \quad (x \in K), \label{eq_551} \end{equation}
in the sense of symmetric matrices.
\end{enumerate}
Denote by $\| A \|$ the operator norm of the matrix $A$.
Our goal in this section is to prove the following:

\begin{proposition} Under the above assumptions,
$$  \sup_{x \in \RR^n} \|  \nabla^2 \psi(x) \|  < +\infty. $$
\label{prop_1101}
\end{proposition}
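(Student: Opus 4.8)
The plan is to differentiate the transport equation (\ref{eq_1120}) twice and then run a maximum-principle argument; the real difficulty is that this takes place on the non-compact manifold $\RR^n$ rather than on a compact domain.

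Rewrite (\ref{eq_1120}) as $\log\det\nabla^2\psi(x)=\rho(\nabla\psi(x))-\psi(x)$, fix a unit vector $\theta\in S^{n-1}$, and differentiate this identity twice in the direction $\theta$. On the left-hand side, the formula for the second derivative of $\log\det$ of a matrix-valued function shows that $\partial_\theta^2\log\det\nabla^2\psi$ equals $\sum_{i,j}\psi^{ij}\partial_i\partial_j\psi_{\theta\theta}$ (where $(\psi^{ij})=(\nabla^2\psi)^{-1}$) minus a nonnegative ``third-order'' term, namely $\operatorname{tr}\!\big((\nabla^2\psi)^{-1}(\partial_\theta\nabla^2\psi)(\nabla^2\psi)^{-1}(\partial_\theta\nabla^2\psi)\big)=\operatorname{tr}(B^2)\ge 0$ for a symmetric matrix $B$. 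On the right-hand side the chain rule gives $\langle\nabla^2\rho(\nabla\psi)\,\nabla^2\psi\,\theta,\,\nabla^2\psi\,\theta\rangle+\langle\nabla\rho(\nabla\psi),\nabla\psi_{\theta\theta}\rangle-\psi_{\theta\theta}$. Now a short computation (using that the cofactor matrix of $\nabla^2\psi$ is divergence-free) identifies the weighted Laplacian of $M^*_\mu$ from (\ref{eq_959}) with $\Delta_{M^*_\mu}=\sum_{i,j}\psi^{ij}\partial_i\partial_j-\langle\nabla\rho(\nabla\psi),\nabla\,\cdot\,\rangle$, so the first-order term $\langle\nabla\rho(\nabla\psi),\nabla\psi_{\theta\theta}\rangle$ is exactly the drift part of $\Delta_{M^*_\mu}$. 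Rearranging, and discarding the nonnegative third-order term, one is left with
$$ \Delta_{M^*_\mu}\psi_{\theta\theta}\ \ge\ \langle\nabla^2\rho(\nabla\psi)\,\nabla^2\psi\,\theta,\,\nabla^2\psi\,\theta\rangle-\psi_{\theta\theta}\ \ge\ \eps_0\,\psi_{\theta\theta}^2-\psi_{\theta\theta}, $$
where the last inequality uses $\nabla^2\rho\ge\eps_0\cdot Id$ from $(\star)$ together with $|\nabla^2\psi\,\theta|\ge\langle\nabla^2\psi\,\theta,\theta\rangle=\psi_{\theta\theta}$, valid since $\nabla^2\psi$ is positive-definite. (Summing over an orthonormal basis, $\Delta_{M^*_\mu}(\Delta\psi)\ge(\eps_0/n)(\Delta\psi)^2-\Delta\psi$, and $\|\nabla^2\psi\|$ is comparable to $\Delta\psi$.)

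If $\psi_{\theta\theta}$ attained its supremum at a point $x_0$, then $\Delta_{M^*_\mu}\psi_{\theta\theta}(x_0)\le 0$ would force $\psi_{\theta\theta}(x_0)\le 1/\eps_0$, and the proposition would follow with the explicit bound $\sup_x\|\nabla^2\psi(x)\|\le n/\eps_0$. Since $\RR^n$ is non-compact one must instead control the behaviour of $\psi_{\theta\theta}$ at infinity. A first, clean piece of a priori information is obtained by integrating the displayed inequality against the finite measure $e^{-\psi(x)}\,dx$ of $M^*_\mu$ (first composing $\psi_{\theta\theta}$ with a bounded increasing concave function to make the integration by parts legitimate, which is permitted because $M^*_\mu$ is stochastically complete, property (i)): since $\int_{\RR^n}(\Delta_{M^*_\mu}g)\,e^{-\psi}\,dx=0$ for such $g$, and since one integration by parts followed by the moment-measure identity gives $\int_{\RR^n}\psi_{\theta\theta}\,e^{-\psi}\,dx=\int_{\RR^n}\langle\theta,\nabla\psi(x)\rangle^2 e^{-\psi(x)}\,dx=\int_K\langle\theta,y\rangle^2\,d\mu(y)\le R^2(K)$, one obtains the $L^2$-bound $\int_{\RR^n}\psi_{\theta\theta}^2\,e^{-\psi}\,dx\le R^2(K)/\eps_0$, uniformly over $\theta\in S^{n-1}$.

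The crux — and the reason this estimate is only qualitative and requires the extra hypothesis $(\star)$ — is to upgrade such integral information to a genuine pointwise bound. I would localize the maximum principle: the displayed inequality exhibits $\psi_{\theta\theta}\ge 0$ as a subsolution of $-\Delta_{M^*_\mu}u\le u$, so a Moser-type local boundedness estimate bounds $\psi_{\theta\theta}(x_0)$ in terms of an $L^2$-average of $\psi_{\theta\theta}$ over a unit metric ball of $M^*_\mu$ centred at $x_0$, with constants that are universal because the Bakry-\'Emery-Ricci tensor of $M^*_\mu$ is at least $\tfrac12$ of the metric (property (ii)), which yields uniform volume doubling and Poincar\'e inequalities on unit balls. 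Making this bound uniform in $x_0$ is where the uniform convexity of $K$ enters: passing to the dual manifold $M_\mu=(K,\nabla^2\vphi,\mu)$, for which $x\mapsto\nabla\psi(x)$ sends a sequence going to infinity in $\RR^n$ to a sequence approaching $\partial K$, one uses the smoothness and strictly positive Gauss curvature of $\partial K$ to prevent $\nabla^2\vphi$ from degenerating in tangential directions near $\partial K$ — equivalently, to keep the relevant metric balls of $M^*_\mu$ from becoming $\mu$-negligible — most conveniently, perhaps, via a barrier built from a ball rolling inside $K$. I expect this control of the behaviour at infinity (near $\partial K$ in the dual picture) to be the main obstacle; the remainder is the differentiation above together with routine estimates.
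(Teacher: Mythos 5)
Your differentiated inequality $L\,\psi_{\theta\theta}\ \ge\ \eps_0\,\psi_{\theta\theta}^2-\psi_{\theta\theta}$ (with $L$ the weighted Laplacian of $M^*_\mu$) is correct and is exactly the infinitesimal version of what the paper exploits; if the supremum of $\psi_{\theta\theta}$ were attained, your argument would indeed give the bound $1/\eps_0$. But the decisive step of the proof is precisely the one you leave open: ruling out that the supremum escapes to infinity. Your plan for this --- Moser-type local boundedness for the subsolution inequality, with constants made uniform via volume doubling and Poincar\'e inequalities on unit metric balls coming from the Bakry-\'Emery bound, plus an unproven non-degeneracy of $\nabla^2\vphi$ near $\partial K$ ``via a barrier built from a ball rolling inside $K$'' --- is only a sketch, and it faces real obstacles you do not address: the condition $\mathrm{Ric}_\infty\ge\frac12$ (infinite-dimensional Bakry-\'Emery) does not by itself yield uniform local doubling, the metric on $K$ is not geodesically complete (the paper itself flags this), and the uniform control of metric balls near $\partial K$ is essentially equivalent to the non-degeneracy you are trying to prove. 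You explicitly call this ``the main obstacle,'' and it is; as it stands the proof is not closed. A secondary flaw: your justification of the $L^2$ bound by composing $\psi_{\theta\theta}$ with a bounded increasing concave $\lambda$ goes the wrong way --- concavity gives $L\lambda(u)\le\lambda'(u)Lu$, hence $\int\lambda'(u)\,Lu\,e^{-\psi}\ge 0$, which does not cap $\int\lambda'(u)(\eps_0u^2-u)e^{-\psi}$ from above; one needs instead a cutoff/truncation argument with control of $\Gamma(u,\chi)$.

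For comparison, the paper avoids all of this by discretizing: it works with the second differences $\delta^{(\eps)}_{\theta\theta}\psi(x)=\psi(x+\eps\theta)+\psi(x-\eps\theta)-2\psi(x)$ rather than with $\psi_{\theta\theta}$. Using Lemma \ref{lem_1146} ($\sup_K\vphi<\infty$, due to Berman--Berndtsson) and the positive Gauss curvature of $\partial K$, it shows $\nabla\psi(x)\to\nabla h_K(x)$ as $|x|\to\infty$ (Lemma \ref{lem_1207}), whence $\delta^{(\eps)}_{\theta\theta}\psi\to 0$ at infinity and its supremum over $\RR^n\times S^{n-1}$ is attained (Corollary \ref{cor_1304}). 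At the maximum point the first-order condition $\nabla\psi(x_0+\eps e)+\nabla\psi(x_0-\eps e)=2\nabla\psi(x_0)$, the concavity of $\log\det$, and $\nabla^2\rho\ge\eps_0\,Id$ give $\delta_{ee}\psi(x_0)\ge\eps_0|u|^2$ while convexity gives $\delta_{ee}\psi(x_0)\le 2\eps|u|$, so $\delta^{(\eps)}_{\theta\theta}\psi\le 4\eps^2/\eps_0$ everywhere and $\psi_{\theta\theta}\le 4/\eps_0$ --- no heat-kernel, doubling, or iteration machinery is needed. If you want to salvage your route, the honest task is to supply the uniform boundary non-degeneracy directly, and the discrete maximum-principle device (Caffarelli's) is exactly the tool that lets the strict convexity hypotheses do that work for you.
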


The argument we present for the demonstration of Proposition \ref{prop_1101} closely follows  the proof of Caffarelli's contraction theorem \cite[Theorem 11]{caf_FKG}.
An alternative approach to Proposition \ref{prop_1101} is outlined in Kolesnikov \cite[Section 6]{koles2}.
We begin the proof of Proposition \ref{prop_1101} with the following lemma, which is due to Berman and Berndtsson \cite{BB}. Their proof is reproduced here for completeness.

\begin{lemma} $\displaystyle \sup_{x \in K} \vphi(x) < +\infty$.
\label{lem_1146}
\end{lemma}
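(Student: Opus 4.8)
We must bound $\varphi=\psi^{*}$ from above on $K$. Write $h_{K}(x)=\sup_{y\in K}x\cdot y$ for the support function of $K$; it is convex and positively $1$-homogeneous, and under the assumption $(\star)$ of this section the body $K$ has a smooth, positively-curved boundary, so $h_{K}$ is $C^{\infty}$ away from the origin, it satisfies $\nabla^{2}h_{K}(x)\,x=0$ (hence $\det\nabla^{2}h_{K}\equiv0$), and its Hessian is positive-definite on $x^{\perp}$. The plan is to prove the lower bound
\begin{equation}
\psi(x)\ \ge\ h_{K}(x)-C\qquad(x\in\RR^{n})\label{eq_plan}
\end{equation}
for a suitable constant $C$. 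Granting \eqref{eq_plan}, Legendre duality yields $\varphi=\psi^{*}\le(h_{K}-C)^{*}=h_{K}^{*}+C$, and $h_{K}^{*}$ is the convex function that vanishes on $\overline{K}$ and equals $+\infty$ elsewhere; hence $\sup_{K}\varphi\le C<+\infty$, which is the assertion. (Equivalently, since $\nabla\psi$ is a diffeomorphism of $\RR^{n}$ onto the interior of $K$, one has $\sup_{K}\varphi=\sup_{x\in\RR^{n}}\big(x\cdot\nabla\psi(x)-\psi(x)\big)$, and one may work with the function $F(x)=x\cdot\nabla\psi(x)-\psi(x)$, whose gradient is $\nabla^{2}\psi(x)\,x$; thus $F$ is increasing along rays from the origin, and $\sup_{K}\varphi=\sup_{u\in S^{n-1}}\lim_{t\to\infty}F(tu)$.)

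The upper inequality $\psi\le h_{K}+\psi(0)$ is essentially free: since the moment measure $\mu$ is supported in $\overline{K}$, we have $\nabla\psi(x)\in\overline{K}$ for every $x$, and therefore $\psi(x)-\psi(0)=\int_{0}^{1}\nabla\psi(tx)\cdot x\,dt\le h_{K}(x)$. In particular $g(x):=h_{K}(x)-\psi(x)$ is bounded below, by $-\psi(0)$. Along any ray $t\mapsto tu$ with $u\in S^{n-1}$ the derivative $\tfrac{d}{dt}g(tu)=h_{K}(u)-u\cdot\nabla\psi(tu)$ is non-negative and non-increasing in $t$, by convexity of $\psi$; consequently $g$ is non-decreasing along rays from the origin, and
$$ \sup_{\RR^{n}}g\ =\ \sup_{u\in S^{n-1}}\lim_{t\to\infty}g(tu). $$
So \eqref{eq_plan} amounts to the statement that $\psi$ grows, uniformly in direction, at the maximal rate compatible with $\nabla\psi$ taking values in $\overline{K}$ — i.e.\ $\psi$ catches up with $h_{K}$ to within an additive constant. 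Here one genuinely needs the transport equation \eqref{eq_1120}, which in the present notation reads $\det\nabla^{2}\psi(x)=e^{\rho(\nabla\psi(x))-\psi(x)}$: since $\rho$ is bounded (assumption (\arabic{eq_935})) and $\psi$ is bounded below — recall that $\int e^{-\psi}=1$ forces $\psi(x)\ge A|x|-B$, see (\ref{eq_904_}) — the right-hand side lies between two positive constants.

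I would establish \eqref{eq_plan} by a comparison argument anchored in this pinching. Fix a large radius $\overline{R}$, set $C=\sup_{|x|=\overline{R}}\big(h_{K}(x)-\psi(x)\big)$, so that the barrier $v:=h_{K}-C$ satisfies $v\le\psi$ on the sphere $\{|x|=\overline{R}\}$, and aim to propagate this to all of $\{|x|\ge\overline{R}\}$ via the maximum principle for the Monge--Ampère operator. At an interior minimum point $x_{0}$ of $\psi-v$ with $\psi(x_{0})<v(x_{0})$ one would have $\nabla\psi(x_{0})=\nabla v(x_{0})$ and $\nabla^{2}\psi(x_{0})\ge\nabla^{2}v(x_{0})\ge0$, while $\det\nabla^{2}\psi(x_{0})$ is bounded and $\det\nabla^{2}v(x_{0})=0$. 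The radial degeneracy of $v$ must be absorbed by perturbing $v$ into a sub-solution that is strictly convex in the directions tangent to the rays (using the positive curvature of $\partial K$, which makes $\nabla^{2}h_{K}$ positive-definite on $x^{\perp}$ and bounds its tangential eigenvalues below), and the possibility that $\psi-v$ drifts to $-\infty$ near infinity is excluded because the recession function of $\psi$ equals $h_{K}$ — indeed $\psi0^{+}$ is the support function of $\operatorname{dom}\psi^{*}$, whose closure is $\overline{K}$ — so that $\psi(x)=h_{K}(x)+o(|x|)$. Reconciling the degeneracy of $h_{K}$ with a clean comparison principle on the unbounded region $\{|x|>\overline{R}\}$ is the step I expect to be the main obstacle; everything else is routine, and once \eqref{eq_plan} is in hand the lemma follows at once.
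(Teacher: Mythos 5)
Your reduction via Legendre duality is not really a reduction: since $\vphi(y)=\sup_x\left(x\cdot y-\psi(x)\right)$, one has $\sup_{y\in K}\vphi(y)=\sup_{x\in\RR^n}\left(h_K(x)-\psi(x)\right)$, so the bound $\psi\ge h_K-C$ that you call ``the plan'' is literally a restatement of the lemma. Everything you actually prove (the upper bound $\psi\le h_K+\psi(0)$ and the monotonicity of $h_K-\psi$ along rays) does not address it, and the only substantive argument you offer --- the barrier/maximum-principle sketch --- has two concrete problems. First, the asserted pinching is false: from the transport equation (\ref{eq_1120}), boundedness of $\rho$ together with the lower bound $\psi(x)\ge A|x|-B$ of (\ref{eq_904_}) gives only an \emph{upper} bound on $\det\nabla^2\psi(x)=e^{\rho(\nabla\psi(x))-\psi(x)}$, which in fact decays exponentially as $|x|\to\infty$; a positive lower bound would require $\psi$ to be bounded above, which is impossible. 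Second, the comparison inequality you need runs the wrong way for your barrier: to conclude $v\le\psi$ on $\{|x|>\overline{R}\}$ from $v\le\psi$ on the sphere, the barrier must carry Monge--Amp\`ere measure at least that of $\psi$, whereas $\det\nabla^2 h_K\equiv 0$; so the whole burden falls on the unconstructed perturbation of $h_K$, together with a comparison principle on an unbounded domain, and the asymptotics $\psi(x)=h_K(x)+o(|x|)$ you invoke do not preclude $h_K-\psi\to+\infty$ (a priori it could grow like $\sqrt{|x|}$). You flag this yourself as the main obstacle; as it stands the proof is incomplete precisely at the point where the lemma lives.

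For contrast, the paper's argument (due to Berman--Berndtsson) bypasses barriers entirely: since $\nabla\vphi$ pushes $\mu$ forward to $e^{-\psi(x)}dx$, one has $\int_K|\nabla\vphi|^p\,d\mu=\int_{\RR^n}|x|^p e^{-\psi(x)}dx<+\infty$ for every $p$, and boundedness of $\rho$ upgrades this to $\int_K|\nabla\vphi|^p\,dx<+\infty$ for some $p>n$; the Sobolev embedding on the bounded convex domain $K$ then yields that $\vphi$ is H\"older, hence bounded. Some step of this integrability-to-pointwise type (or a genuinely completed comparison argument) is what your proposal is missing.
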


\begin{proof} Since $K$ is bounded, it suffices to show that $\vphi$ is $\alpha$-H\"older for some $\alpha > 0$. According
to the Sobolev inequality in the convex domain $K \subset \RR^n$ (see, e.g., \cite[Chapter 1]{SC}), it is sufficient to prove that
\begin{equation}
\int_K |\nabla \vphi(x)|^p dx < +\infty, \label{eq_1130}
\end{equation}
for some $p > n$. Fix $p > n$. The map $x \mapsto \nabla \vphi(x)$ pushes the measure $\mu$ forward to $\exp(-\psi(x)) dx$. Hence,
\begin{equation}
 \int_K |\nabla \vphi|^p d \mu = \int_{\RR^n} |x|^p e^{-\psi(x)} dx < + \infty, \label{eq_1134}
 \end{equation}
where we used the fact that $e^{-\psi}$ decays exponentially at infinity (see, e.g., (\ref{eq_904_}) above or \cite[Lemma 2.1]{K_psi}).
Since $\rho$ is a bounded function on $K$ and $e^{-\rho}$ is the density of $\mu$, then (\ref{eq_1130}) follows from (\ref{eq_1134}).
\end{proof}

For $x \in \RR^n$ denote $h_K(x) = \sup_{y \in K} x \cdot y$, the supporting functional of $K$. The following lemma is analogous to
\cite[Lemma 4]{caf_FKG}.

\begin{lemma}
$ \displaystyle \lim_{R \rightarrow \infty} \sup_{|x| \geq R} \left| \nabla \psi(x) - \nabla h_K(x) \right| = 0.$
\label{lem_1207}
\end{lemma}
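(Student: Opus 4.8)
The plan is to show that $\psi$ grows like the support function $h_K$ by playing off two facts against each other: that $\nabla\psi$ takes values in $\overline K$, and that the Legendre transform $\varphi=\psi^*$ has bounded oscillation (Lemma~\ref{lem_1146}). First note that $\nabla\psi(x)\in\overline K$ for every $x\in\RR^n$: since the moment measure $\mu$ is supported in $K$ this holds for almost every $x$, and $\nabla\psi$ is continuous. Hence $x\cdot\nabla\psi(x)\le h_K(x)$, with equality in the limit being exactly what we must prove. On the other hand, the subgradient inequality for $\psi$ at the origin reads $\psi(0)\ge\psi(x)-x\cdot\nabla\psi(x)$, and since $\varphi\bigl(\nabla\psi(x)\bigr)=x\cdot\nabla\psi(x)-\psi(x)$ (equality in the Fenchel inequality), this gives $\varphi(\nabla\psi(x))\ge-\psi(0)$ for all $x$. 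Together with Lemma~\ref{lem_1146}, which says $\varphi\le S:=\sup_K\varphi<+\infty$, this shows that the quantity $\varphi(z)-\varphi(\nabla\psi(x))$ is bounded above by $c_0:=S+\psi(0)$ for all $z\in K$ and all $x$.

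Next I would combine the Fenchel inequality $\varphi(z)\ge x\cdot z-\psi(x)$ (valid for every $z$) with the equality $\varphi(\nabla\psi(x))=x\cdot\nabla\psi(x)-\psi(x)$ to get, for every $z\in K$,
\[
 x\cdot\bigl(z-\nabla\psi(x)\bigr)\ \le\ \varphi(z)-\varphi\bigl(\nabla\psi(x)\bigr)\ \le\ c_0 .
\]
Taking the supremum over $z\in K$ yields $0\le h_K(x)-x\cdot\nabla\psi(x)\le c_0$ for every $x$; dividing by $|x|$ and using the $1$-homogeneity of $h_K$,
\[
 0\ \le\ h_K\!\left(\tfrac{x}{|x|}\right)-\tfrac{x}{|x|}\cdot\nabla\psi(x)\ \le\ \frac{c_0}{|x|}\qquad(x\ne 0).
\]
In words: $\nabla\psi(x)$ is a point of $\overline K$ that maximizes the linear functional $z\mapsto\tfrac{x}{|x|}\cdot z$ over $\overline K$ up to an error $O(1/|x|)$.

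Finally, one must turn this near-optimality into closeness to the true maximizer, which is $\nabla h_K(x/|x|)=\nabla h_K(x)$ (using that $h_K$ is $C^1$ away from the origin, since $(\star)$ makes the Gauss map of $\partial K$ a diffeomorphism and $\nabla h_K$ $0$-homogeneous). This is where the strict convexity in $(\star)$ enters: because $\partial K$ is smooth with principal curvatures bounded below by a positive constant, there is $c_1=c_1(K)>0$ with
\[
 h_K(\theta)-\theta\cdot z\ \ge\ c_1\,\bigl|\,z-\nabla h_K(\theta)\,\bigr|^2\qquad(\theta\in S^{n-1},\ z\in\overline K),
\]
the standard quadratic modulus of uniform convexity (for the present purpose one only needs that this modulus tends to $0$ as its left-hand side does, which already follows from strict convexity of $K$ together with a compactness argument). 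Applying this with $\theta=x/|x|$ and $z=\nabla\psi(x)\in\overline K$ gives $|\nabla\psi(x)-\nabla h_K(x)|^2\le c_0/(c_1|x|)$, hence $\sup_{|x|\ge R}|\nabla\psi(x)-\nabla h_K(x)|\le\sqrt{c_0/(c_1R)}\to 0$. The only non-formal ingredient, and the point to be careful about, is this last modulus-of-convexity estimate — it is exactly the analogue of the role strict convexity plays in Caffarelli's Lemma~4 of \cite{caf_FKG}; everything else is bookkeeping with the Legendre transform and with the inclusion $\nabla\psi(\RR^n)\subseteq\overline K$.
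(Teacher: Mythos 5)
Your argument is correct and is essentially the paper's own proof: both deduce from Lemma~\ref{lem_1146} that $0 \le h_K(x) - x\cdot\nabla\psi(x) \le C$ (your constant $S+\psi(0)$ equals the oscillation of $\vphi$ on $K$, matching the paper's $2M$), and then use the smoothness and positive Gauss curvature of $\partial K$ from $(\star)$ to convert this near-maximality into $|\nabla\psi(x)-\nabla h_K(x)| = O(|x|^{-1/2})$. The only difference is that you spell out the quadratic modulus-of-convexity (rolling-ball) estimate that the paper invokes implicitly in passing from its inequality (\ref{eq_1155_}) to (\ref{eq_1157}).
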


\begin{proof} The function $\vphi: K \rightarrow \RR$ is convex, hence bounded from below by some affine function, which
in turn is greater than some constant on the bounded set $K$. According to Lemma \ref{lem_1146}, the function $\vphi$ is also bounded from above.
Set $M = \sup_{x \in K} |\vphi(x)|$. By elementary properties of the Legendre transform, for any $x \in \RR^n$,
\begin{equation}  \psi(x) =  x \cdot \nabla \psi(x) - \vphi(\nabla \psi(x)) \leq x \cdot \nabla \psi(x) + M.
\label{eq_1151} \end{equation}
However, for any $x \in \RR^n$,
\begin{equation} \psi(x) = \sup_{y \in K} \left[ x \cdot y - \vphi(y) \right] \geq -M + \sup_{y \in K} x \cdot y = -M + x \cdot \nabla h_K(x), \label{eq_1152}
\end{equation}
as $\nabla h_K(x) \in \partial K$ is the unique point at which $\sup_{y \in K} x \cdot y$ is attained.
Using (\ref{eq_1151}) and (\ref{eq_1152}),
\begin{equation}
 (\nabla h_K(x) - \nabla \psi(x)) \cdot \frac{x}{|x|} \leq \frac{2M}{|x|} \quad \quad \quad \quad (0 \neq x \in \RR^n). \label{eq_1155_}
 \end{equation}
 Recall that $\nabla \psi(x) \in K$ for any $x \in \RR^n$.
 Since $\partial K$  is smooth with positive Gauss curvature, inequality (\ref{eq_1155_}) implies that there
exist $R_K, \alpha_K > 0$, depending only on $K$,  with
\begin{equation}
 |\nabla h_K(x) - \nabla \psi(x)| \leq \alpha_K \sqrt{ \frac{ 2M}{|x|} } \quad \quad \quad \quad \text{for} \ |x| \geq R_K.
 \label{eq_1157}
 \end{equation}
The lemma follows from (\ref{eq_1157}).
\end{proof}

For $\eps > 0, \theta \in \RR^n$ and a function $f: \RR^n \rightarrow \RR$ denote
$$ \delta^{(\eps)}_{\theta \theta} f (x) =  f(x + \eps \theta) + f(x - \eps \theta) - 2 f(x) \quad \quad \quad \quad (x \in \RR^n). $$
For a smooth $f$ and a small $\eps$, the quantity $\delta^{(\eps)}_{\theta \theta} f (x) / \eps^2$ approximates
the pure second derivative $f_{\theta \theta}(x)$.
We would like to use the maximum principle for the function $\psi_{\theta \theta}(x)$, but
we do not know whether or not it attains its supremum. This is the reason for using the approximate second derivative $\delta^{(\eps)}_{\theta \theta} \psi (x)$
as a substitute.

\begin{corollary} Fix $0 < \eps < 1$. Then the supremum of $\delta^{(\eps)}_{\theta \theta} \psi(x) $
over all $x \in \RR^n$ and $\theta \in S^{n-1}$ is attained. \label{cor_1304}
\end{corollary}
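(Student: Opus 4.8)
The plan is to show that the function $(x,\theta) \mapsto \delta^{(\eps)}_{\theta\theta}\psi(x)$, viewed on $\RR^n \times S^{n-1}$, cannot have a supremum that escapes to spatial infinity; since $S^{n-1}$ is compact and the function is continuous, a standard compactness argument then yields that the supremum is attained. So the heart of the matter is to establish that $\delta^{(\eps)}_{\theta\theta}\psi(x) \to 0$ as $|x| \to \infty$, uniformly in $\theta \in S^{n-1}$.

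First I would write, for fixed $\theta \in S^{n-1}$ and $|t|\le\eps$,
$$
\delta^{(\eps)}_{\theta\theta}\psi(x) = \int_{-\eps}^{\eps} (\eps - |t|)\, \psi_{\theta\theta}(x+t\theta)\, dt
$$
(the standard integral representation of a second difference), or more crudely bound $\delta^{(\eps)}_{\theta\theta}\psi(x)$ by differences of gradients: by the mean value theorem applied to $s \mapsto \psi(x+s\theta)$,
$$
\delta^{(\eps)}_{\theta\theta}\psi(x) = \bigl(\nabla\psi(x+\xi_1\theta) - \nabla\psi(x+\xi_2\theta)\bigr)\cdot \theta \cdot(\text{something})
$$
for suitable $\xi_1 \in (0,\eps)$, $\xi_2\in(-\eps,0)$; more simply, $0 \le \delta^{(\eps)}_{\theta\theta}\psi(x) \le |\nabla\psi(x+\eps\theta) - \nabla\psi(x-\eps\theta)|\cdot 2\eps$ using convexity and monotonicity of the one-dimensional slices. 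Then I would invoke Lemma \ref{lem_1207}: both $\nabla\psi(x+\eps\theta)$ and $\nabla\psi(x-\eps\theta)$ are within $o(1)$ of $\nabla h_K(x\pm\eps\theta)$ as $|x|\to\infty$, uniformly in $\theta$. Since $h_K$ is positively homogeneous of degree one and $K$ has smooth boundary with positive Gauss curvature, $\nabla h_K$ is continuous (indeed, Lipschitz on the sphere with a modulus controlled by the curvature) and homogeneous of degree zero, so $\nabla h_K(x+\eps\theta) - \nabla h_K(x-\eps\theta) \to 0$ as $|x|\to\infty$, uniformly in $\theta\in S^{n-1}$. Combining these gives $\sup_{\theta}\delta^{(\eps)}_{\theta\theta}\psi(x)\to 0$ as $|x|\to\infty$.

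Granting that, the rest is routine. Set $s = \sup_{x\in\RR^n,\ \theta\in S^{n-1}} \delta^{(\eps)}_{\theta\theta}\psi(x)$, which is nonnegative (take any $\theta$ and use convexity) and finite (by the vanishing at infinity plus continuity on compacta — or, if one prefers, one may simply note it is bounded; actually finiteness also follows from Proposition \ref{prop_1101} being proven later, but we should not use it here, so we rely on the decay estimate). If $s=0$ it is attained trivially at any point. Otherwise, take a maximizing sequence $(x_k,\theta_k)$; by the uniform decay at infinity the $x_k$ stay in a bounded set, so passing to a subsequence $x_k \to x_*$ and $\theta_k\to\theta_*\in S^{n-1}$, and by continuity of $\delta^{(\eps)}_{\theta\theta}\psi$ in $(x,\theta)$ we get $\delta^{(\eps)}_{\theta_*\theta_*}\psi(x_*) = s$.

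The main obstacle is the gradient-difference estimate at infinity: one needs Lemma \ref{lem_1207} together with the uniform (in $\theta$) continuity of $\nabla h_K$ on the sphere, and one must be slightly careful that the $o(1)$ terms in Lemma \ref{lem_1207} are genuinely uniform in the direction — which they are, since the bound (\ref{eq_1157}) depends only on $|x|$ and on $K$. Everything else is soft compactness.
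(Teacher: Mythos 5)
Your proposal is correct and follows essentially the same route as the paper: bound the second difference by a gradient difference over a ball of radius $\eps$ (via convexity/mean value theorem), use Lemma \ref{lem_1207} together with the continuity and $0$-homogeneity of $\nabla h_K$ to get uniform decay of $\delta^{(\eps)}_{\theta\theta}\psi$ at infinity, and conclude by continuity and non-negativity that the supremum is attained.
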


\begin{proof} According to Lemma \ref{lem_1207} and the continuity and $0$-homogeneity of $\nabla h_K(x)$,
\begin{align} \nonumber
& \lim_{R \rightarrow \infty}  \sup_{|x| \geq R \atop{x_1,x_2 \in B(x, 1)}}  |\nabla \psi(x_1) - \nabla \psi(x_2)|  =
\lim_{R \rightarrow \infty}  \sup_{|x| \geq R \atop{x_1,x_2 \in B(x, 1)}} |\nabla h_K(x_1) - \nabla h_K(x_2)| \nonumber
\\
& =  \lim_{R \rightarrow \infty} \sup_{|x| = 1 \atop{x_1,x_2 \in B(x, 1/R)}} |\nabla h_K(x_1) - \nabla h_K(x_2)| = 0,
\label{eq_1109}
\end{align}
where $B(x, r) = \left \{ y \in \RR^n ; |x - y| < r \right \}$.
From Lagrange's mean value theorem,
\begin{align} \nonumber \delta^{(\eps)}_{\theta \theta} \psi (x) & =  ( \psi(x + \eps \theta) - \psi(x) ) \, - \, ( \psi(x) - \psi(x - \eps \theta)  )
\\ & \leq \eps \sup_{x_1,x_2 \in B(x,\eps)} |\nabla \psi(x_1) - \nabla \psi(x_2)|. \label{eq_1106} \end{align}
According to (\ref{eq_1109}) and (\ref{eq_1106}),
\begin{align}
\lim_{R \rightarrow \infty} & \sup_{|x| \geq R \atop{\theta \in S^{n-1}}} \delta^{(\eps)}_{\theta \theta} \psi (x)
\leq \eps \lim_{R \rightarrow \infty} \sup_{|x| \geq R \atop{x_1,x_2 \in B(x, \eps)}} |\nabla \psi(x_1) - \nabla \psi(x_2)| = 0.\label{eq_1115}
\end{align}
Since $\psi$ is convex and smooth, then the function $\delta^{(\eps)}_{\theta \theta} \psi$ is non-negative and continuous in $(x, \theta) \in \RR^n \times S^{n-1}$.
It thus follows from (\ref{eq_1115}) that its supremum is attained.
\end{proof}

We shall apply the well-known matrix inequality, which states that when $A$ and $B$ are symmetric, positive-definite $n \times n$ matrices,
then
\begin{equation}
 \log \det B \leq \log \det A + Tr \left[ A^{-1}(B-A) \right] = \log \det A + Tr \left[ A^{-1}B \right] - n, \label{eq_1315}
 \end{equation}
 where $Tr(A)$ stands for the trace of the matrix $A$.
Recall that the transport equation (\ref{eq_1120}) is valid, hence,
\begin{equation}
\log \det \nabla^2 \psi(x) = -\psi(x) + (\rho \circ \nabla \psi)(x) \quad \quad \quad \quad (x \in \RR^n).
\label{eq_1155}
\end{equation}
In particular, $\nabla^2 \psi(x)$ is always an invertible
matrix  which is in fact positive-definite. We denote its inverse by $\left( \nabla^2 \psi(x) \right)^{-1} = ( \psi^{ij}(x) )_{i,j=1,\ldots,n}$.
For a smooth function
$u: \RR^n \rightarrow \RR$ denote
\begin{equation}
A u(x)  = Tr \left[ \left(\nabla^2 \psi(x) \right)^{-1} \nabla^2 u(x) \right] = \psi^{ij}(x) u_{ij}(x) \quad \quad (x \in \RR^n),
\label{eq_1311}
\end{equation}
where we adhere to the Einstein convention: When an index is repeated twice
in an expression, once as a subscript and once as a superscript, then we sum over this index from $1$ to $n$.
According to (\ref{eq_1315}) for any $\theta \in \RR^n$,
\begin{equation}
 \log \det \nabla^2 \psi(x + \theta) \leq \log \det \nabla^2 \psi(x) + \psi^{ij}(x) \psi_{ij}(x + \theta) - n
 \qquad (x \in \RR^n), \label{eq_1317}
 \end{equation}
with an equality for $\theta =0$.

\begin{proof}[Proof of Proposition  \ref{prop_1101}] We follow Caffarelli's argument \cite[Theorem 11]{caf_FKG}. Our assumption (\ref{eq_551})
yields that the function $\rho(x) - \eps_0 |x|^2/2$ is convex. Hence, for any $x,y$ such that $x-y,x+y,x \in K$,
\begin{equation}
\rho(x + y) + \rho(x - y) - 2 \rho(x) \geq \frac{\eps_0}{2} \left( |x+y|^2 + |x-y|^2 - 2|x|^2 \right) = \eps_0 |y|^2.
\label{eq_554}
\end{equation}
Fix $0 < \eps <1$ and abbreviate $\delta_{\theta \theta} f = \delta^{(\eps)}_{\theta \theta} f$.
From (\ref{eq_1155}) and (\ref{eq_1317}) as well as some simple algebraic manipulations,
for any $\theta \in \RR^n$,
\begin{equation}
 A( \delta_{\theta \theta} \psi) \geq \delta_{\theta \theta} \left( \log \det \nabla^2 \psi \right) = -\delta_{\theta \theta} \psi + \delta_{\theta \theta} (\rho \circ \nabla \psi). \label{eq_534}
 \end{equation}
According to Corollary \ref{cor_1304}, the maximum of $(x, \theta) \mapsto \delta_{\theta \theta}\psi(x)$ over $\RR^n \times S^{n-1}$
is attained at some $(x_0, e) \in \RR^n \times S^{n-1}$.
Since $\psi$ is smooth, then at the point $x_0$,
$$  0 = \nabla (\delta_{ee} \psi)(x_0) = \nabla \psi(x_0 + \eps e) + \nabla \psi(x_0 + \eps e) - 2 \nabla \psi(x_0).
$$
In other words, there exists a vector $u \in \RR^n$ such that
$$ \nabla \psi(x_0 + \eps e) = \nabla \psi(x_0) + u, \quad \quad \nabla \psi(x_0 - \eps e) = \nabla \psi(x_0) - u. $$
 Setting $v = \nabla \psi(x_0)$ and using (\ref{eq_554}), we obtain
\begin{equation}  \delta_{ee} (\rho \circ \nabla \psi)(x_0) = \rho(v + u) + \rho(v- u) -2 \rho(v) \geq \eps_0 |u|^2.
\label{eq_555} \end{equation}
The smooth function $x \mapsto \delta_{ee} \psi(x)$ reaches a maximum at $x_0$,
hence the matrix $\nabla^2 \left( \delta_{ee} \psi \right)(x_0)$ is negative semi-definite. Since the matrix $(\nabla^2 \psi)^{-1}(x_0)$ is positive-definite,
then from the definition (\ref{eq_1311}),
\begin{equation}  0 \geq A (\delta_{ee} \psi) (x_0).
\label{eq_556} \end{equation}
Now, (\ref{eq_534}), (\ref{eq_555}) and (\ref{eq_556}) yield
\begin{equation}  \delta_{ee} \psi(x_0) \geq \delta_{ee} \left( \rho \circ \nabla \psi \right)(x_0) \geq \eps_0 |u|^2. \label{eq_559}
\end{equation}
By the convexity of $\psi$,
$$ \psi(x_0 + \eps e) - \psi(x_0) \leq \nabla \psi(x_0 + \eps e) \cdot (\eps e) = (v + u) \cdot (\eps e) $$
and
$$ \psi(x_0 - \eps e) - \psi(x_0) \leq \nabla \psi(x_0 - \eps e) \cdot (-\eps e) = (v - u) \cdot (-\eps e). $$
Summing the last two inequalities yields
\begin{equation}  \delta_{ee}\psi(x_0) \leq (v + u) \cdot (\eps e) + (v - u) \cdot (-\eps e) = 2 \eps (u \cdot e) \leq 2 |u| \eps. \label{eq_602}
\end{equation}
The inequalities (\ref{eq_559}) and (\ref{eq_602}) imply that $|u| \leq 2 \eps / \eps_0$ and hence from (\ref{eq_602}),
$$ \delta_{ee}(\psi)(x_0) \leq 4 \eps^2 / \eps_0. $$
Consequently, for any $x \in \RR^n$ and $\theta \in S^{n-1}$ we have $\delta_{\theta \theta}^{(\eps)} \psi(x) \leq 4 \eps^2 / \eps_0$, and hence
$$ \psi_{\theta \theta}(x) = \lim_{\eps \rightarrow 0^+} \frac{\delta^{(\eps)}_{\theta \theta} \psi(x)}{\eps^2} \leq
 \frac{4}{\eps_0}. $$
Therefore $\| \nabla^2 \psi(x) \| \leq 4 / \eps_0$ for any $x \in \RR^n$, and the proof is complete.
\end{proof}

\begin{remark} {\rm Our proof of Proposition \ref{prop_1101} provides the explicit bound
\begin{equation}
 \sup_{x \in \RR^n} \| \nabla^2 \psi(x) \| \leq 4 / \eps_0. \label{eq_1203} \end{equation}
By arguing as in \cite{caf_FKG_erat}, one may improve the right-hand side of (\ref{eq_1203}) to just $1/ \eps_0$. We
omit the straightforward details.
}
\end{remark}

\section{Diffusion processes and stochastic completeness}
\label{sec_diffusion}

In this section we consider a diffusion process associated with transportation of measure.
Our point of view
owes much to the article by Kolesnikov \cite{koles}, and we make an effort to maintain a discussion
as general as the one in Kolesnikov's work.

\medskip Let $\mu$ be a probability measure supported
on an open set $K \subseteq \RR^n$, with density $e^{-\rho}$ where $\rho: K \rightarrow \RR$ is a smooth
function. Let $\psi: \RR^n \rightarrow \RR$ be a smooth, convex function with
\begin{equation}
 \lim_{R \rightarrow \infty} \left( \inf_{|x| \geq R} \psi(x) \right) = +\infty. \label{eq_1205}
 \end{equation}
Condition (\ref{eq_1205}) holds automatically when $\int e^{-\psi} <\infty$, see (\ref{eq_904_}) above.
Rather than requiring that
the transport equation (\ref{eq_1120}) hold true, in this section we make
the more general  assumption that
\begin{equation} e^{-\rho(\nabla \psi(x))} \det \nabla^2 \psi(x) = e^{-V(x)}
\quad \quad \quad \quad (x\in \RR^n)
\label{eq_949}
\end{equation}
for a certain smooth function $V: \RR^n \rightarrow \RR$.
 Clearly, when $\mu$ is the moment
measure of $\psi$, equation (\ref{eq_949}) holds true with $V = \psi$ and condition (\ref{eq_1205})
holds as well. The transport
equation (\ref{eq_949}) means that the map $x \mapsto \nabla \psi(x)$ pushes
the probability measure $e^{-V(x)} dx$ forward to $\mu$.
 In this section we explain and prove the following:

\begin{proposition} Let $K \subseteq \RR^n$ be an open set, and let $V, \psi: \RR^n \rightarrow \RR$ and $\rho: K \rightarrow \RR$
be smooth functions with $\psi$ being convex. Assume (\ref{eq_1205}) and (\ref{eq_949}), and furthermore, that
\begin{equation}
\inf_{x \in K} \nabla \rho(x) \cdot x > -\infty. \label{eq_2146}
\end{equation}
Then the weighted Riemannian manifold $M = \left(\RR^n, \nabla^2 \psi, e^{-V(x)} dx \right)$ is stochastically complete.
\label{prop_957}
\end{proposition}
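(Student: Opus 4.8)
The plan is to establish stochastic completeness of $M = (\RR^n, \nabla^2 \psi, e^{-V(x)} dx)$ by exhibiting a Lyapunov function and invoking the classical non-explosion test of Khasminskii, in the form valid for an \emph{arbitrary} weighted Riemannian manifold, complete or not --- this generality is what matters here, since the Hessian metric $\nabla^2 \psi$ need not be geodesically complete. According to this test, it is enough to produce a nonnegative, proper, $C^2$ function $f$ on $\RR^n$ --- ``proper'' meaning that the sublevel sets $\{ f \leq c \}$ are compact, the topology of $M$ being the standard one on $\RR^n$ since $\nabla^2\psi$ is a smooth positive-definite metric --- together with a constant $C$ such that $\mathcal{L} f \leq C f$ everywhere, where $\mathcal{L}$ is the generator of the diffusion attached to $M$. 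Because $\psi$ is continuous and, by (\ref{eq_1205}), tends to $+\infty$ at infinity, its sublevel sets are compact; in particular $\psi$ is proper and bounded below. The natural candidate is therefore $f = \psi - \inf_{\RR^n} \psi + 1 \geq 1$.

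The main step is the computation of $\mathcal{L} \psi$. The Dirichlet form of $M$ is $\mathcal{E}(u,u) = \int_{\RR^n} \psi^{ij} u_i u_j \, e^{-V} dx$, where $(\psi^{ij})$ is the inverse matrix of $\nabla^2 \psi$, so its generator is the elliptic operator
$$ \mathcal{L} u = e^{V} \partial_j \! \left( e^{-V} \psi^{ij} \partial_i u \right) = \psi^{ij} u_{ij} + \left( \partial_j \psi^{ij} \right) u_i - \psi^{ij} V_j u_i . $$
I claim that
$$ \mathcal{L} \psi(x) = n - \nabla \psi(x) \cdot (\nabla \rho)(\nabla \psi(x)) \qquad (x \in \RR^n) . $$
To prove this I would argue as follows. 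First, $\psi^{ij} \psi_{ij} = \mathrm{Tr}\, \mathrm{Id} = n$. Second, by Jacobi's formula $\psi^{lj} \psi_{ljk} = \partial_k \log \det \nabla^2 \psi$, and differentiating the transport equation (\ref{eq_949}), rewritten as $\log \det \nabla^2 \psi = -V + \rho \circ \nabla \psi$, gives $\partial_k \log \det \nabla^2 \psi = -V_k + \rho_m(\nabla \psi) \psi_{mk}$. Third, $\partial_j \psi^{ij} = - \psi^{ik} \psi^{lj} \psi_{klj}$; contracting this with $\psi_i$, writing $w^k = \psi^{ik} \psi_i$, and combining with the previous two facts and the elementary identities $w^k \psi_{mk} = \psi_m$ and $w^k V_k = \psi^{ij} V_j \psi_i$, one finds that the two terms involving $V$ cancel and the surviving term is exactly $- \psi_m \rho_m(\nabla \psi) = - \nabla \psi \cdot (\nabla \rho)(\nabla \psi)$.

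Granting the identity, the conclusion is immediate. The transport equation (\ref{eq_949}) already presupposes $\nabla \psi(x) \in K$ for every $x$ (otherwise $\rho(\nabla \psi(x))$ is undefined), so hypothesis (\ref{eq_2146}) applies with $y = \nabla \psi(x)$: setting $c_0 = - \inf_{y \in K} \nabla \rho(y) \cdot y < +\infty$ we get $\nabla \psi(x) \cdot (\nabla \rho)(\nabla \psi(x)) \geq -c_0$, whence
$$ \mathcal{L} \psi(x) \leq n + c_0 \qquad (x \in \RR^n) . $$
Thus $\mathcal{L} f = \mathcal{L} \psi$ is bounded from above, so $\mathcal{L} f \leq (|n + c_0| + 1) f$ since $f \geq 1$, and $f$ is a nonnegative, proper, smooth Lyapunov function; the Khasminskii criterion then yields that $M$ is stochastically complete. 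I expect the only genuine work to be the coordinate bookkeeping in the computation of $\mathcal{L}\psi$ --- tracking the derivatives of the inverse Hessian and noticing that the drift contributions of $V$ cancel --- while the one point to handle with care is to cite a version of the non-explosion test that asks merely for a proper Lyapunov function and not for geodesic completeness of $(\RR^n, \nabla^2 \psi)$.
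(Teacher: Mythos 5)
Your proposal is correct and takes essentially the same route as the paper: both hinge on the identity $L\psi = n - \nabla\rho(\nabla\psi)\cdot\nabla\psi$ obtained by logarithmically differentiating (\ref{eq_949}) (the $V$-terms indeed cancel), the bound $L\psi \leq n + c_0$ from (\ref{eq_2146}), and the properness of $\psi$ guaranteed by (\ref{eq_1205}), so that $\psi$ itself is the Lyapunov function. The only difference is presentational: where you cite the Khasminskii non-explosion test as a black box, the paper proves that criterion by hand, constructing the process up to the exit times of $\{\psi \leq k\}$ via Kallenberg's existence theorem and then applying Dynkin's formula and Markov's inequality --- which is precisely the standard proof of the test you invoke.
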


\begin{remark} {\rm
Note that in the most interesting case where $V = \psi$, the weighted Riemannian manifold $M$ from Proposition \ref{prop_957}
coincides with $M_{\mu}^*$ as defined in (\ref{eq_959}) and (\ref{eq_957}) above. Additionally, in the case where
$\mu$ is log-concave with barycenter at the origin, condition (\ref{eq_2146}) does hold true: In this case,
according to Fradelizi \cite{fradelizi}, we know that $\rho(0) \leq n + \inf_{x \in K} \rho(x)$. By convexity,
$$  \nabla \rho(x) \cdot x \geq \rho(x) - \rho(0) \geq -n \quad \quad \quad (x \in K), $$
and (\ref{eq_2146}) follows. Thus Proposition \ref{prop_957} implies the stochastic completeness
of $M_{\mu}^*$ when $\mu$ is a log-concave probability measure with barycenter at the origin,
which satisfies the regularity conditions (\arabic{eq_935}).
}
\end{remark}

\medskip We now turn to a detailed explanation of {\it stochastic completeness} of a weighted Riemannian manifold.
See, e.g., Grigor'yan \cite{gri} for more information.
 The {\it Dirichlet form}  associated with the weighted Riemannian manifold $M = (\Omega, g, \nu)$ is defined as
\begin{equation}  \Gamma(u, v) = \int_{\Omega} g \left( \nabla_g u, \nabla_g v \right) d \nu, \label{eq_1015} \end{equation}
where $u,v: \Omega \rightarrow \RR$ are smooth functions for which the integral in (\ref{eq_1015}) exists.
Here, $\nabla_g u$ stands for the Riemannian gradient of $u$. The {\it Laplacian} associated with $M$
is the unique operator $L$, acting on smooth functions $u: \Omega \rightarrow \RR$, for which
\begin{equation}  \int_{\Omega} (Lu) v d \nu = -\Gamma(u,v) \label{eq_226} \end{equation}
for any compactly-supported, smooth function $v: \Omega \rightarrow \RR$. In the case of the weighted manifold  $M = \left(\RR^n, \nabla^2 \psi, e^{-V(x)} dx \right)$ from Proposition \ref{prop_957},
we may express the Dirichlet form as follows:
\begin{equation}
 \Gamma(u,v) = \int_{\RR^n} \left( \psi^{ij} u_i v_j \right) e^{-V} \label{eq_947} \end{equation}
where $\nabla^2 \psi(x)^{-1} = ( \psi^{ij}(x) )_{i,j=1,\ldots,n}$
and $u_i = \partial u / \partial x^i$. Note that the matrix $\nabla^2 \psi(x)$ is invertible, thanks to (\ref{eq_949}).
As in Section \ref{sec_apriori} above, we use the Einstein summation convention; thus
in (\ref{eq_947}) we sum over $i,j$ from $1$ to $n$.
We will also make use of abbreviations such as $\psi_{ijk} = \partial^3 \psi / (\partial x^i \partial x^j \partial x^k)$,
and also $\psi^{i}_{j \ell} = \psi^{i k} \psi_{j k \ell}$ and $\psi^{ij}_k = \psi^{i\ell} \psi^{j m} \psi_{\ell m k}$.
Therefore, for example,
$$ (\psi^{ij})_k = \frac{\partial \psi^{ij}(x)}{\partial x^k} = -\psi^{i\ell} \psi^{j m} \psi_{\ell m k} = -\psi^{ij}_k. $$
We may now express the Laplacian $L$ associated with $M = \left(\RR^n, \nabla^2 \psi, e^{-V(x)} dx \right)$ by
\begin{equation}
 L u = \psi^{ij} u_{ij} - (\psi^{ij}_j + \psi^{ij} V_j) u_i \label{eq_1128}
 \end{equation}
as may be directly verified from (\ref{eq_947}) by integration by parts.

\begin{lemma} For any smooth function $u: \RR^n \rightarrow \RR$,
\begin{equation}
 L u = \psi^{ij} u_{ij} - \sum_{j=1}^n \rho_j(\nabla \psi(x)) u_j.
 \label{eq_950} \end{equation}
\label{lem_950}
\end{lemma}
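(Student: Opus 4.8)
The plan is to simplify the drift term in the general Laplacian formula (\ref{eq_1128}) by exploiting the transport equation (\ref{eq_949}). Starting from
$$ L u = \psi^{ij} u_{ij} - (\psi^{ij}_j + \psi^{ij} V_j) u_i, $$
it suffices to show that for each fixed $i$,
\begin{equation}
 \psi^{ij}_j + \psi^{ij} V_j = \sum_{j=1}^n \rho_j(\nabla \psi(x)), \label{eq_drift_target}
\end{equation}
where $\rho_j(\nabla\psi(x)) = (\partial \rho/\partial y^j)(\nabla \psi(x))$; then (\ref{eq_950}) follows immediately. Wait — more precisely the claim is that the $i$-th drift coefficient equals $\rho_j(\nabla\psi(x))\,\psi^{?}$... let me restate: I expect the identity to read $\psi^{ij}_j + \psi^{ij}V_j = \rho_k(\nabla\psi(x))\,\psi^{ik}\psi_{?}$. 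The cleanest route is to differentiate the logarithm of the transport equation.

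First I would take logarithms in (\ref{eq_949}) to get
$$ \log \det \nabla^2 \psi(x) = V(x) - \rho(\nabla \psi(x)) \qquad (x \in \RR^n), $$
and then differentiate both sides with respect to $x^k$. Using Jacobi's formula for the derivative of $\log\det$, the left-hand side gives $\psi^{ij}\psi_{ijk}$, which in the paper's notation is exactly $\psi^{ij}_{j}$ contracted appropriately — indeed $\psi^{ij}\psi_{ijk} = \psi^{j}_{jk}$ by definition, and one checks $\psi^{ij}_j = \psi^{i\ell}\psi^{jm}\psi_{\ell m j}$ matches after relabeling. The right-hand side gives $V_k(x) - \rho_m(\nabla\psi(x))\,\psi_{mk}(x)$ by the chain rule, since $\partial(\nabla\psi)^m/\partial x^k = \psi_{mk}$. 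So for every $k$,
\begin{equation}
 \psi^{ij}\psi_{ijk} = V_k - \rho_m(\nabla\psi(x))\,\psi_{mk}. \label{eq_logdiff}
\end{equation}

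Next I would contract (\ref{eq_logdiff}) with $\psi^{ik}$, summing over $k$. On the left, $\psi^{ik}\psi^{?j}\psi_{?jk}$ — being careful with indices — produces exactly $\psi^{ij}_j$ (up to the sign convention: recall $(\psi^{ij})_k = -\psi^{ij}_k$, so one must track the sign from differentiating the inverse matrix). On the right, $\psi^{ik}V_k$ is the term $\psi^{ij}V_j$ appearing in (\ref{eq_1128}), and $\psi^{ik}\rho_m(\nabla\psi)\,\psi_{mk} = \rho_m(\nabla\psi)\,\delta^i_m = \rho_i(\nabla\psi(x))$. Rearranging then yields $\psi^{ij}_j + \psi^{ij}V_j = \rho_i(\nabla\psi(x))$ for each $i$, and substituting into (\ref{eq_1128}) gives
$$ Lu = \psi^{ij}u_{ij} - \rho_i(\nabla\psi(x))\,u_i = \psi^{ij}u_{ij} - \sum_{i=1}^n \rho_i(\nabla\psi(x))\,u_i, $$
which is (\ref{eq_950}) after renaming the summation index.

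\textbf{Main obstacle.} The only real difficulty is bookkeeping: keeping the raised/lowered index conventions straight, correctly applying Jacobi's formula $\partial_k \log\det\nabla^2\psi = \psi^{ij}\psi_{ijk}$, and handling the sign that enters when differentiating $\psi^{ij}$ (the identity $(\psi^{ij})_k = -\psi^{i\ell}\psi^{jm}\psi_{\ell m k}$ is already recorded in the excerpt, so this is routine). There is no analytic subtlety — everything is smooth by hypothesis, and the transport equation holds pointwise on all of $\RR^n$ — so the proof is a short computation once the chain rule is applied to $\rho\circ\nabla\psi$.
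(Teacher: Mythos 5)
Your route is exactly the paper's: take the logarithmic derivative of the transport equation (\ref{eq_949}), apply Jacobi's formula and the chain rule to $\rho \circ \nabla \psi$, contract with the inverse Hessian, and substitute into (\ref{eq_1128}); the paper's intermediate identities (\ref{eq_1259}) and (\ref{eq_1129}) are precisely your two displayed identities. The only flaw is a sign slip: (\ref{eq_949}) gives $\log \det \nabla^2 \psi(x) = \rho(\nabla \psi(x)) - V(x)$, not $V(x) - \rho(\nabla \psi(x))$, so the differentiated identity should read $\psi^{ij}\psi_{ijk} = -V_k + \rho_m(\nabla \psi(x))\, \psi_{mk}$; as you wrote it, contracting with $\psi^{ik}$ would yield $\psi^{ij}_j = \psi^{ij}V_j - \rho_i(\nabla\psi(x))$, which is inconsistent with the (correct) drift identity $\psi^{ij}_j + \psi^{ij}V_j = \rho_i(\nabla\psi(x))$ that you then assert and use. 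With that sign fixed, everything goes through as you describe, and note that no sign needs tracking on the left-hand side: Jacobi's formula produces $\psi^{ij}\psi_{ijk}$ with a plus sign (no derivative of the inverse matrix is taken), and contracting with $\psi^{ik}$ gives $\psi^{ij}_j$ directly by the total symmetry of $\psi_{\ell m k}$.
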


\begin{proof}
We take  the logarithmic derivative of (\ref{eq_949}) and obtain that for  $\ell=1,\ldots,n$,
\begin{equation}
\psi^{i}_{i \ell}(x) = - V_{\ell}(x) + \sum_{i=1}^n \rho_i(\nabla \psi(x)) \psi_{i \ell}(x) \qquad (x \in \RR^n). \label{eq_1259}
\end{equation}
Multiplying (\ref{eq_1259}) by $\psi^{j \ell}$ and summing over $\ell$
we see that for $j=1,\ldots,n$,
\begin{equation}
\psi^{i j}_{i}(x) = -\psi^{j \ell}(x) V_{\ell}(x) + \rho_j(\nabla \psi(x)) \qquad (x \in \RR^n). \label{eq_1129} \end{equation}
Now (\ref{eq_950}) follows from (\ref{eq_1128}) and (\ref{eq_1129}).
\end{proof}

\begin{lemma} Under the assumptions of Proposition \ref{prop_957}, there exists $A \geq 0$
such that for all $x \in \RR^n$,
$$ (L \psi)(x) \leq A. $$
\label{lem_1038}
\end{lemma}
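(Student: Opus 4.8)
The plan is to apply Lemma~\ref{lem_950} with $u = \psi$ and then invoke the hypothesis (\ref{eq_2146}). Substituting $u = \psi$ into (\ref{eq_950}) gives
$$ (L \psi)(x) = \psi^{ij}(x) \psi_{ij}(x) - \sum_{j=1}^n \rho_j(\nabla \psi(x)) \psi_j(x) \qquad (x \in \RR^n). $$
Here the first term on the right-hand side is $Tr \left[ (\nabla^2 \psi(x))^{-1} \nabla^2 \psi(x) \right] = n$, and the second term equals $\nabla \rho(\nabla \psi(x)) \cdot \nabla \psi(x)$, since $\psi_j(x)$ is the $j$-th coordinate of the vector $\nabla \psi(x)$ while $\rho_j$ denotes the $j$-th partial derivative of $\rho$. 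Hence
$$ (L \psi)(x) = n - \nabla \rho(\nabla \psi(x)) \cdot \nabla \psi(x) \qquad (x \in \RR^n). $$

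It remains to bound the term $\nabla \rho(\nabla \psi(x)) \cdot \nabla \psi(x)$ from below. I would first note that $\nabla \psi(x) \in K$ for every $x \in \RR^n$: this is built into the very formulation of the transport equation (\ref{eq_949}), in which $\rho(\nabla \psi(x))$ must make sense while $\rho$ is defined only on $K$; equivalently, $\nabla \psi$ pushes the probability measure $e^{-V(x)} dx$ forward to $\mu$, which is supported in $K$. Therefore, writing $c = \inf_{y \in K} \nabla \rho(y) \cdot y$, which is finite by assumption (\ref{eq_2146}), we obtain $\nabla \rho(\nabla \psi(x)) \cdot \nabla \psi(x) \geq c$ for all $x$, and consequently $(L \psi)(x) \leq n - c$. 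Setting $A = \max \{ n - c, 0 \} \geq 0$ then yields the claim.

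There is no substantial difficulty in this argument, as the lemma follows at once from the explicit expression for $L$ in Lemma~\ref{lem_950}, the elementary trace identity $\psi^{ij} \psi_{ij} = n$, and the one-sided bound (\ref{eq_2146}). The only point that merits attention --- and the closest thing to an obstacle --- is the observation that the image of the map $\nabla \psi$ is contained in $K$, so that (\ref{eq_2146}) may legitimately be applied at the points $\nabla \psi(x)$; this is immediate from the standing hypotheses of Proposition~\ref{prop_957}.
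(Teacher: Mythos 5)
Your argument is correct and is essentially the same as the paper's: both substitute $u=\psi$ into Lemma~\ref{lem_950}, use $\psi^{ij}\psi_{ij}=n$, and bound $\nabla\rho(\nabla\psi(x))\cdot\nabla\psi(x)$ from below via (\ref{eq_2146}), defining $A=\max\{0,\,n-\inf_{y\in K}\nabla\rho(y)\cdot y\}$. Your explicit remark that $\nabla\psi(x)\in K$ (so that (\ref{eq_2146}) applies at those points) is a point the paper leaves implicit, but it is not a different route.
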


\begin{proof} Set $A = \max \left \{0, n - \inf_{y \in K} \nabla \rho(y) \cdot y \right \}$, which is a finite number
according to our assumption (\ref{eq_2146}). From Lemma \ref{lem_950},
$$
L \psi(x) = \psi^{ij} \psi_{ij} - \sum_{j=1}^n \rho_j(\nabla \psi(x)) \psi_j(x) =n - \sum_{j=1}^n \rho_j(\nabla \psi(x)) \psi_j(x).
$$
It remains to prove that $n- \sum_j \rho_j(\nabla \psi(x)) \psi_j(x) \leq A$, or equivalently, we need to show that
\begin{equation}
\nabla \rho(y) \cdot y \geq n-A \quad \quad \quad \quad \text{for all} \ y \in K.
\label{eq_1132_}
\end{equation}
However, (\ref{eq_1132_}) holds true in view of the definition of $A$ above. Therefore $L \psi \leq A$ pointwise in $\RR^n$. \end{proof}

The Laplacian $L$ associated with a weighted Riemannian manifold $M$ is a second-order, elliptic
operator with smooth coefficients. We say that $M$ is {\it stochastically complete} if the It\^o diffusion process
whose generator is $L$ is well-defined at all times $t \in [0, \infty)$. In the particular case of Proposition \ref{prop_957},
this means the following: Let $(B_t)_{t \geq 0}$ be the standard $n$-dimensional Brownian motion. The diffusion equation
with generator $L$ as in (\ref{eq_950}) is the stochastic differential equation:
\begin{equation}  d Y_t = \sqrt{2} \left( \nabla^2 \psi(Y_t) \right)^{-1/2} d B_t - \nabla \rho(\nabla \psi(Y_t)) dt,
\label{eq_342} \end{equation}
where $(\nabla^2 \psi(x))^{-1/2}$ is the positive-definite square root of $(\nabla^2 \psi(x))^{-1}$.
For background on stochastic calculus, the reader may consult sources such as Kallenberg \cite{ka} or
 {\O}ksendal \cite{oksendal}.
The {\it stochastic completeness} of $M$ is equivalent to the existence of a solution $(Y_t)_{t \geq 0}$ to the equation (\ref{eq_342}),
with an initial condition $Y_0 = z$ for a fixed $z \in \RR^n$, that does not explode in finite time. Proposition \ref{prop_957} therefore follows from the next proposition:

\begin{proposition} Let $\psi, V$ and $\rho$ be as in Proposition \ref{prop_957}.
Fix $z \in \RR^n$. Then there exists a unique stochastic process $(Y_t)_{t \geq 0}$, adapted to the filtration
induced by the Brownian motion,  such that
for all $t \geq 0$,
\begin{equation}  Y_{t} = z + \int_0^{t} \sqrt{2} \left( \nabla^2 \psi \left(Y_t \right) \right)^{-1/2} d B_t - \int_0^{t} \nabla \rho(\nabla \psi(Y_t)) dt,
\label{eq_211} \end{equation}
and such that the map $t \mapsto Y_t \ \ (t \geq 0)$ is  almost-surely continuous. \label{prop_1150}
\end{proposition}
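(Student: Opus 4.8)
The plan is to establish existence, uniqueness and non-explosion for the SDE \eqref{eq_211} by a standard localization argument combined with a Lyapunov function estimate, the latter being supplied essentially for free by Lemma \ref{lem_1038}. First I would observe that the coefficients of \eqref{eq_211} are smooth: the diffusion coefficient $x \mapsto \sqrt{2}\,(\nabla^2 \psi(x))^{-1/2}$ is smooth because $\nabla^2\psi$ is everywhere positive-definite (a consequence of \eqref{eq_949}, cf.\ \eqref{eq_1155}), and the drift $x \mapsto -\nabla\rho(\nabla\psi(x))$ is smooth because $\rho$ and $\psi$ are. Hence on every ball $B(0,N)$ the coefficients are Lipschitz and bounded, so by the classical It\^o existence-and-uniqueness theorem (see, e.g., \cite{oksendal} or \cite{ka}) there is, for each $N$, a unique strong solution up to the exit time $\tau_N = \inf\{ t \geq 0 : |Y_t| \geq N \}$, and these solutions are consistent as $N$ grows. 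Setting $\tau_\infty = \lim_{N \to \infty} \tau_N$, we obtain a unique solution $(Y_t)_{0 \le t < \tau_\infty}$, almost-surely continuous on $[0,\tau_\infty)$; everything reduces to showing $\tau_\infty = \infty$ almost surely.

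For the non-explosion step, the natural Lyapunov function is $\psi$ itself. By It\^o's formula applied to the semimartingale $Y_t$ and the smooth function $\psi$, for $t < \tau_N$,
\begin{equation}
\psi(Y_t) = \psi(z) + \int_0^t (L\psi)(Y_s)\, ds + \int_0^t \sqrt{2}\, \nabla\psi(Y_s) \cdot \left( \nabla^2\psi(Y_s) \right)^{-1/2} dB_s,
\label{eq_proposal_ito}
\end{equation}
where the $ds$-integrand is exactly $(L\psi)(Y_s)$ because $L$ as in \eqref{eq_950} is precisely the generator of \eqref{eq_342}: the second-order part $\psi^{ij}u_{ij}$ comes from the quadratic variation of the martingale term (the matrix $(\nabla^2\psi)^{-1/2}(\nabla^2\psi)^{-1/2} = (\nabla^2\psi)^{-1}$ contributes the factor $\psi^{ij}$, and the $\sqrt 2$ cancels the $1/2$ in It\^o's formula), and the first-order part is the drift $-\nabla\rho(\nabla\psi(Y_s))$. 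By Lemma \ref{lem_1038} we have $(L\psi)(Y_s) \leq A$ for a constant $A \geq 0$, so taking expectations and using that the stochastic integral is a true martingale up to the bounded stopping time $t \wedge \tau_N$,
\begin{equation}
\EE\, \psi(Y_{t \wedge \tau_N}) \leq \psi(z) + A\, t.
\label{eq_proposal_bound}
\end{equation}

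Finally I would convert \eqref{eq_proposal_bound} into the statement $\tau_\infty = \infty$ using the coercivity \eqref{eq_1205}. Fix $T > 0$ and $\eps > 0$. By \eqref{eq_1205} there is an $R$ with $\psi(x) \geq (\psi(z) + AT)/\eps$ whenever $|x| \geq R$; pick $N \geq R$. Since $\psi$ is bounded below on $\RR^n$ (again by \eqref{eq_1205}, or by \eqref{eq_904_}), replacing $\psi$ by $\psi - \inf\psi$ if needed we may assume $\psi \geq 0$, so Markov's inequality gives
$$
\PP(\tau_N \leq T) \leq \PP\!\left( \psi(Y_{T \wedge \tau_N}) \geq \tfrac{\psi(z)+AT}{\eps} \right) \leq \frac{\eps\,\EE\,\psi(Y_{T \wedge \tau_N})}{\psi(z)+AT} \leq \eps.
$$
Letting $N \to \infty$ yields $\PP(\tau_\infty \leq T) \leq \eps$, and since $\eps, T$ were arbitrary, $\tau_\infty = \infty$ almost surely. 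This also upgrades the solution to one defined and continuous on all of $[0,\infty)$, and uniqueness on $[0,\infty)$ follows from uniqueness on each $[0, \tau_N]$. The one point demanding care — and the main obstacle — is the justification that the local martingale in \eqref{eq_proposal_ito} is a genuine martingale when stopped at $\tau_N$: this is where boundedness of the coefficients on $B(0,N)$ is used, so the localization by $\tau_N$ must be kept in place throughout and the limit $N \to \infty$ taken only at the very end, exactly as above. A secondary routine check is that $L$ really is the generator of \eqref{eq_342}, i.e. that the matrix identity $\bigl((\nabla^2\psi)^{-1/2}\bigr)\bigl((\nabla^2\psi)^{-1/2}\bigr)^{\!\top} = (\nabla^2\psi)^{-1}$ combined with It\^o's formula reproduces \eqref{eq_950}; this is immediate from symmetry of $\nabla^2\psi$.
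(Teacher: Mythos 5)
Your proposal is correct and follows essentially the same route as the paper: local well-posedness from the smooth (locally Lipschitz) coefficients, followed by non-explosion via the Lyapunov bound $L\psi\leq A$ of Lemma \ref{lem_1038}, Markov's inequality and the coercivity (\ref{eq_1205}). The only cosmetic differences are that the paper localizes with the stopping times $T_k=\inf\{t:\psi(Y_t)\geq k\}$ and invokes Kallenberg's Theorem 21.3 together with Dynkin's formula, whereas you stop at exits from Euclidean balls and verify the martingale property of the stopped stochastic integral by hand.
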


\begin{proof} Since $\psi(x)$ tends to $+\infty$ when $x \rightarrow \infty$,
then the convex set $\{ \psi \leq R \} = \{ x \in \RR^n ; \psi(x) \leq R \}$
is compact for any $R \in \RR$.
We use Theorem 21.3 in Kallenberg \cite{ka} and the remark following it.
We deduce that there exists a unique continuous stochastic process $(Y_t)_{t \geq 0}$ and stopping times $T_k = \inf \{ t \geq 0 ; \psi(Y_t) \geq k \}$
such that for any $k > \psi(z), t \geq 0$,
\begin{equation}  Y_{ \min \{t, T_k \} } = z + \int_0^{\min \{t, T_k \}} \sqrt{2} \left( \nabla^2 \psi(Y_t) \right)^{-1/2} d B_t - \int_0^{\min \{t, T_k \}}
\nabla \rho(\nabla \psi(Y_t)) dt.
\label{eq_955} \end{equation}
Denote $T = \sup_k T_k$. We would like to prove that $T = +\infty$ almost-surely.  According to Dynkin's formula and
Lemma \ref{lem_1038}, for any $k > \psi(z)$ and $t \geq 0$,
$$ \EE \psi( Y_{ \min \{t, T_k \} } ) = \psi(z) + \int_0^{\min \{t, T_k \}} (L \psi)(Y_t) dt \leq \psi(z) + 2 A t, $$
where $A$ is the parameter  from Lemma \ref{lem_1038}.
Set $\alpha = -\inf_{x \in \RR^n} \psi(x)$, a finite number in view of (\ref{eq_1205}).
Then $\psi(x) + \alpha$ is non-negative. By Markov-Chebyshev's inequality, for any $t \geq 0$ and $k > \psi(z)$,
$$ \PP(T_k \leq t) = \PP \left( \psi( Y_{ \min \{t, T_k \} } ) \geq k \right) \leq 
\frac{\EE \psi( Y_{ \min \{t, T_k \} } ) + \alpha}{k + \alpha} 
\leq
\frac{2 At + \psi(z) + \alpha}{k + \alpha}. $$
Hence, for any $t \geq 0$,
$$ \PP(T \leq t) \leq \inf_k \PP(T_k \leq t) \leq \liminf_{k \rightarrow \infty} \frac{2 At + \psi(z) + \alpha}{k + \alpha} = 0. $$
Therefore $T = +\infty$ almost surely. We may let $k$ tend to infinity in (\ref{eq_955}) and deduce (\ref{eq_211}).
The uniqueness of the continuous stochastic process $(Y_t)_{t \geq 0}$ that satisfies (\ref{eq_211}) follows from
the uniqueness of the solution to (\ref{eq_955}).
\end{proof}

For $z \in \RR^n$ write $(Y_t^{(z)})_{t \geq 0}$ for the stochastic process from Proposition \ref{prop_1150}
with $Y_0 = z$.
Denote by $\nu$ the probability measure on $\RR^n$ whose density is $e^{-V(x)} dx$.
The lemma below is certainly part of the standard theory of diffusion processes. We were not able to
find a precise reference, hence we provide a proof which relies on the existence of the heat kernel.

\begin{lemma} There exists a smooth function $p_t(x,y) \ (x, y \in \RR^n, t > 0)$ which is symmetric in $x$ and $y$, such that
for any $y \in \RR^n$ and $t > 0$, the random vector
$$ Y_t^{(y)} $$
has density $x \mapsto p_t(x,y)$ with respect to $\nu$. \label{lem_343}
\end{lemma}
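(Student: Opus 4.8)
The plan is to construct the kernel $p_t(x,y)$ from the Euclidean heat kernel on $\RR^n$, transferred through the weighted-manifold structure. First I would recall that the Laplacian $L$ of $M$ is a uniformly-locally-elliptic second-order operator with smooth coefficients on $\RR^n$, and that $M$ is stochastically complete by Proposition~\ref{prop_957}. The general theory of parabolic equations with smooth coefficients on $\RR^n$ (see, e.g., the references to Grigor'yan \cite{gri}) then guarantees the existence of a minimal positive fundamental solution $p_t(x,y)$ to the heat equation $\partial_t u = L u$, which is smooth in $(t,x,y)$ for $t>0$; stochastic completeness is precisely the statement that $\int_{\RR^n} p_t(x,y)\, d\nu(y) = 1$ for all $t>0$ and all $x$, so no mass escapes to infinity. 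Since $L$ is symmetric with respect to $\nu$ — this is exactly the content of the integration-by-parts identity (\ref{eq_226}) defining $L$, together with the explicit form (\ref{eq_947}) of the Dirichlet form — the heat semigroup $e^{tL}$ is self-adjoint on $L^2(\nu)$, and hence its integral kernel $p_t(x,y)$ with respect to $\nu$ is symmetric in $x$ and $y$. This handles the symmetry and smoothness assertions.

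Next I would identify this analytic heat kernel with the law of the diffusion $(Y_t^{(y)})_{t\ge 0}$. The SDE (\ref{eq_342}) has smooth (hence locally Lipschitz) coefficients, so for each starting point $y$ it admits a unique strong solution up to its explosion time; Proposition~\ref{prop_1150} tells us this explosion time is almost surely $+\infty$. The transition semigroup $(P_t f)(y) := \EE f(Y_t^{(y)})$ is then a Feller-type semigroup whose generator, computed by It\^o's formula, agrees with $L$ on $C_c^\infty(\RR^n)$ — indeed the drift $-\nabla\rho(\nabla\psi(\cdot))$ and diffusion matrix $2(\nabla^2\psi)^{-1}$ in (\ref{eq_342}) were chosen precisely so that the generator is $\psi^{ij}u_{ij} - \rho_j(\nabla\psi)u_j$, which is $L$ by Lemma~\ref{lem_950}. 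Both $P_t$ and the analytically-constructed semigroup $e^{tL}$ solve the martingale problem / Cauchy problem for $L$ with the same initial data and are conservative; by uniqueness of the (conservative) solution — which one again gets from the local ellipticity plus the non-explosion guaranteed by Proposition~\ref{prop_1150} — the two semigroups coincide. Therefore $\EE f(Y_t^{(y)}) = \int_{\RR^n} f(x)\, p_t(x,y)\, d\nu(x)$ for all bounded measurable $f$, which is exactly the claim that $Y_t^{(y)}$ has density $x\mapsto p_t(x,y)$ with respect to $\nu$.

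The step I expect to be the main obstacle is the rigorous identification of the probabilistic transition function with the analytic heat kernel, and in particular pinning down a clean reference for ``local ellipticity with smooth coefficients plus non-explosion implies a unique, smooth, $\nu$-symmetric conservative heat kernel exists and equals the diffusion's transition density.'' On a general weighted Riemannian manifold this is standard but scattered across the literature; the safest route is to localize — on every relatively compact open set $U$ the Dirichlet heat kernel $p_t^U(x,y)$ exists, is smooth, symmetric, and increases to a limit $p_t(x,y)$ as $U\uparrow\RR^n$ — and then use stochastic completeness to upgrade this limit to a genuine probability density. The rest of the argument is routine once this existence-and-identification issue is handled, so in the write-up I would state the localized construction, cite Grigor'yan \cite{gri} for the monotone limit of Dirichlet heat kernels and for the equivalence between conservativeness of $e^{tL}$ and the non-explosion established in Proposition~\ref{prop_1150}, and then invoke It\^o's formula and uniqueness for the SDE (\ref{eq_342}) to match $p_t(\cdot,y)\,d\nu$ with the law of $Y_t^{(y)}$.
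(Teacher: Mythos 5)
Your construction of $p_t$ is essentially the paper's: both of you obtain the heat kernel of the weighted manifold from Grigor'yan \cite{gri} (the paper cites Theorems 7.13 and 7.20), and smoothness, symmetry in $x,y$, and the sub-probability bound $\int p_t(x,y)\,d\nu(x)\le 1$ all come from that theory, so that part is fine. The genuine gap is in the identification of $p_t(\cdot,y)\,d\nu$ with the law of $Y_t^{(y)}$. You propose to say that the transition semigroup $P_tf(y)=\EE f\bigl(Y_t^{(y)}\bigr)$ and the analytic semigroup both solve the Cauchy/martingale problem for $L$ and then invoke ``uniqueness of the conservative solution.'' But the uniqueness statement that stochastic completeness actually provides is uniqueness of bounded solutions of the Cauchy problem, and to apply it you must first show that $(t,y)\mapsto \EE f\bigl(Y_t^{(y)}\bigr)$ is itself a solution in that class --- a regularity assertion about the probabilistic semigroup that does not follow merely from ``local ellipticity plus non-explosion'' and that you never establish; likewise, well-posedness of the martingale problem and the fact that the minimal analytic semigroup is the transition function of a process solving it are exactly the kind of statements whose proof is the content of this lemma. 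You flag this identification as the main obstacle, but your fallback (localize, Dirichlet kernels, It\^o) is left entirely as a plan.

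The paper closes this gap by running the argument in the opposite direction, which avoids any regularity question about $P_t$: for smooth compactly supported $f$, set $v(t,y)=\int f(x)\,p_t(x,y)\,d\nu(x)$. This $v$ is smooth (differentiate under the integral sign, $f$ has compact support), bounded (since $\int p_t(x,y)\,d\nu(x)\le 1$), satisfies $\partial_t v=L_y v$ by symmetry of $p_t$ and the heat equation in the first variable, and extends continuously to $v(0,\cdot)=f$. Then It\^o's formula applied to $Z_t=v\bigl(t_0-t,Y_t^{(y)}\bigr)$ makes the drift terms cancel, so $Z_t=Z_0+R_t$ with $(R_t)$ a local martingale; boundedness of $v$ upgrades it to a true martingale, and taking expectations at $t=t_0$ gives $\EE f\bigl(Y_{t_0}^{(y)}\bigr)=v(t_0,y)$, which is the claim. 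If you keep your semigroup-uniqueness framing you end up needing this computation (or an equivalent one) anyway, so the cleanest fix is to replace the uniqueness appeal by the paper's direct It\^o argument on the time-reversed analytic solution.
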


\begin{proof} We appeal to Theorem 7.13 and Theorem 7.20 in Grigor'yan \cite{gri}, which deals
with heat kernels on weighted Riemannian manifolds. According to these theorems, there exists a heat kernel, that is, a non-negative function $p_t(x,y) \ (x,y \in \RR^n, t > 0)$ symmetric in $x$ and
$y$ and smooth
jointly in $(t,x,y)$, that satisfies the following two properties:
\begin{enumerate}
\item[(i)] For
any $y \in \RR^n$, the function $u(t,x) = p_t(x,y)$ satisfies
$$\frac{\partial u(t,x)}{\partial t} = L_x u(t,x) \quad \quad \quad \quad (x \in \RR^n, t > 0)
$$
where by $L_x u(t,x)$ we mean that the operator $L$ is acting on the $x$-variables.
\item[(ii)] For any smooth, compactly-supported function $f: \RR^n \rightarrow \RR$ and $x \in \RR^n$,
\begin{equation}
 \int_{\RR^n} p_t(x,y) f(y) d \nu(y) \stackrel{t \rightarrow 0^+}\longrightarrow f(x), \label{eq_1049_}
 \end{equation}
 and the convergence in (\ref{eq_1049_}) is locally uniform in $x \in \RR^n$.
 \end{enumerate}
Theorem 7.13 in Grigor'yan \cite{gri} also guarantees that $\int p_t(x,y) d \nu(x) \leq 1$ for any $y$.
It remains to prove that the random vector $Y_t^{(y)}$ has density $x \mapsto p_t(x,y)$ with respect to $\nu$. Equivalently, we need to show that
for any smooth, compactly-supported function $f: \RR^n \rightarrow \RR$ and $y \in \RR^n, t > 0$,
\begin{equation}
\EE f \left(Y_t^{(y)} \right) = \int_{\RR^n} f(x) p_t(x,y) d \nu(x). \label{eq_306}
\end{equation}
Denote by $v(t,y) \ \ (t > 0, y \in \RR^n)$ the right-hand side of (\ref{eq_306}), a smooth, bounded function.
We also set $v(0, y) = f(y)  \ \ (y \in \RR^n)$ by continuity, according to (ii).
Then the function $v(t,y)$ is continuous and bounded in $(t,y) \in [0, +\infty) \times \RR^n$.
Since $f$ is compactly-supported then we may safely differentiate under the integral sign
with respect to $y$ and $t$,
and obtain
$$ \frac{\partial v(t,y)}{\partial t} = \int_{\RR^n} f(x) \frac{\partial p_t(x,y)}{\partial t} d \nu(y),
\quad L_y v(t,y) =  \int_{\RR^n} f(x)  \left( L_y p_t(x,y) \right)  d \nu(y). $$
From (i) we learn that
\begin{equation} \frac{\partial v(t,y)}{\partial t} = L_y v(t,y) \quad \quad \quad \quad (y \in \RR^n, t > 0).
\label{eq_316}
\end{equation}
Fix $t_0 > 0$ and $y \in \RR^n$. Denote $Z_t = v \left( t_0 - t, Y_t^{(y)} \right)$ for $0 \leq t \leq t_0$.
Then $(Z_t)_{0 \leq t \leq t_0}$ is a continuous stochastic process. From It\^o's formula and (\ref{eq_316}), for $0 \leq t \leq t_0$,
$$  Z_t = Z_0 \, + \, R_t \, + \, \int_0^t \left[ L_y v \left(t_0 - t, Y_t^{(y)} \right)  \, - \,  \frac{\partial v}{\partial t} \left(t_0 - t, Y_t^{(y)} \right) \right] dt
 = Z_0 \, + \, R_t $$
where $(R_t)_{0 \leq t \leq t_0}$ is a local martingale with $R_0 = 0$. Since $v$ is bounded, then $(R_t)_{0 \leq t \leq t_0}$ is in fact a martingale, and
in particular $\EE R_{t_0} = \EE R_0 = 0$. Thus,
$$ \EE f \left(Y_{t_0}^{(y)} \right) = \EE Z_{t_0} = \EE Z_0 = v(t_0, y) = \int_{\RR^n} f(x) p_{t_0}(x,y) d \nu(x), $$
and (\ref{eq_306}) is proven.
\end{proof}

\begin{corollary} Suppose that $Z$ is a random vector in $\RR^n$, distributed according to $\nu$,
independent of the Brownian motion $(B_t)_{t \geq 0}$ used for the construction of $(Y_t^{(z)})_{t \geq 0, z \in \RR^n}$.

\medskip Then, for any $t \geq 0$, the random vector
$ Y_t^{(Z)} $
is also distributed according to $\nu$.
\label{cor_1206}
\end{corollary}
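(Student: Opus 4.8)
The plan is to show that $\nu$ is a stationary measure for the diffusion by combining the symmetry of the heat kernel established in Lemma \ref{lem_343} with the fact, also recorded there, that $\int_{\RR^n} p_t(x,y)\, d\nu(x) \leq 1$. First I would fix a bounded, smooth, compactly-supported test function $f: \RR^n \rightarrow \RR$ and a time $t \geq 0$, and compute $\EE f\bigl(Y_t^{(Z)}\bigr)$ by conditioning on the value of $Z$. Since $Z$ is independent of the Brownian motion driving the processes $(Y_t^{(z)})_{z}$, conditioning on $Z = y$ and using Lemma \ref{lem_343} gives
\begin{equation}
\EE f\bigl(Y_t^{(Z)}\bigr) = \int_{\RR^n} \EE f\bigl(Y_t^{(y)}\bigr)\, d\nu(y) = \int_{\RR^n} \left( \int_{\RR^n} f(x) p_t(x,y)\, d\nu(x) \right) d\nu(y).
\label{eq_station_plan}
\end{equation}

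Next I would apply Fubini's theorem to interchange the order of integration in \eqref{eq_station_plan}; this is justified because $f$ is bounded and compactly supported while $p_t$ is non-negative with $\int p_t(x,y)\, d\nu(x) \leq 1$, so the double integral of $|f(x)| p_t(x,y)$ is finite. After interchanging, the inner integral becomes $\int_{\RR^n} p_t(x,y)\, d\nu(y)$, which by the symmetry $p_t(x,y) = p_t(y,x)$ equals $\int_{\RR^n} p_t(y,x)\, d\nu(y) \leq 1$. To conclude I need this inner integral to equal exactly $1$ for $\nu$-almost every $x$, not merely to be bounded by $1$ --- i.e.\ I need the diffusion to be \emph{conservative}. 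But this is precisely the stochastic completeness established in Proposition \ref{prop_957} (equivalently Proposition \ref{prop_1150}): the process $(Y_t^{(x)})_{t \geq 0}$ is defined for all times and does not explode, so $\EE 1 = 1$, which by Lemma \ref{lem_343} applied with $f \equiv 1$ on large balls (or by a standard monotone-class / monotone-convergence argument using compactly supported approximations of $1$) forces $\int_{\RR^n} p_t(x,y)\, d\nu(x) = 1$ for every $y$. Feeding this back, \eqref{eq_station_plan} becomes $\int_{\RR^n} f(x)\, d\nu(x)$, so $Y_t^{(Z)}$ and $Z$ have the same law against every such $f$, hence $Y_t^{(Z)} \sim \nu$.

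The main obstacle is the passage from $\int p_t(x,y)\, d\nu(x) \leq 1$ to equality, that is, ruling out loss of mass. The cleanest route is to invoke Proposition \ref{prop_957} directly: non-explosion of $(Y_t^{(y)})$ means $Y_t^{(y)} \in \RR^n$ almost surely, so for an increasing sequence of cutoff functions $f_k \uparrow 1$ that are smooth and compactly supported, $\EE f_k(Y_t^{(y)}) \to 1$ by monotone convergence, while the right-hand side of \eqref{eq_306} tends to $\int_{\RR^n} p_t(x,y)\, d\nu(x)$ by monotone convergence as well; comparing gives the conservativity. One should take care that the test functions used throughout are exactly those allowed in Lemma \ref{lem_343} (smooth, compactly supported), and that the final equality of laws is upgraded from these test functions to all bounded measurable functions by the standard density argument, which is routine.
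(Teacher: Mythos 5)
Your proposal is correct and follows essentially the same route as the paper: condition on $Z$, invoke Lemma \ref{lem_343} for the density, apply Fubini and the symmetry $p_t(x,y)=p_t(y,x)$, and use that the kernel integrates to $1$ in its first argument. The paper treats this last point as immediate from the statement of Lemma \ref{lem_343} (the non-exploding process $Y_t^{(x)}$ \emph{has} density $p_t(\cdot,x)$ with respect to $\nu$, so that density integrates to $1$), whereas you spell out the conservativity via cutoffs and Proposition \ref{prop_957} --- a harmless elaboration of the same argument.
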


\begin{proof} According to Lemma \ref{lem_343}, for any measurable set $A \subset \RR^n$,
\begin{align*}  \PP \left( Y_t^{(Z)} \in A \right) & = \int_{\RR^n} \PP \left( Y_t^{(z)} \in A \right) d \nu(z)
= \int_{\RR^n} \left( \int_{A} p_t(z, x) d \nu(x) \right) d \nu(z) \\ & =
\int_{A} \left( \int_{\RR^n} p_t(x, z) d \nu(z) \right) d \nu(x) = \nu(A).  \tag*{\qedhere} \end{align*}
\end{proof}

\begin{remark} {\rm
Our choice to use stochastic processes
in this paper is just a matter of personal taste. All of the arguments here
can be easily rephrased in analytic terminology. For instance, the proof
of Proposition
\ref{prop_1150} relies on the fact that $L \psi$ is bounded from above,
similarly to the analytic approach in Grigor'yan \cite[Section 8.4]{gri}.
Another example is the use of local martingales towards the end of Lemma \ref{lem_343}, which may be replaced
by analytic arguments as in \cite[Section 7.4]{gri}.
}
\end{remark}

\section{Bakry-\'Emery technique}
\label{sec_BE}

In this section we prove Theorem \ref{thm2}.
While the viewpoint and ideas of Bakry and \'Emery \cite{BE} are  certainly the main source of inspiration for our analysis,
we are not sure whether the abstract framework in \cite{B, BE} entirely encompasses the subtlety of our specific weighted Riemannian manifold.
For instance, Lemma \ref{lem_1622} below seems related to
the positivity of the
{\it carr\'e du champ} $\Gamma_2$ and to the inequality
$\Gamma_2 \geq \Gamma/2$, rendered as property (ii) in Section 1 above.
In the case $\eps \geq 1/2$, Lemma \ref{lem_1622} actually follows from an application of \cite[Lemma 2.4]{B}
with $f(x) = x^1$ and $\rho =1/2$. Yet, in general, it appears to us advantageous to proceed by analyzing our model for itself, rather
than viewing it as an abstract diffusion semigroup satisfying a curvature-dimension bound.

\medskip
Let $\mu$ be a log-concave probability measure on $\RR^n$
satisfying the regularity assumptions
(\arabic{eq_935}),
whose barycenter lies at the origin. Let $\psi: \RR^n \rightarrow \RR$ be convex and smooth, such
that the transport equation (\ref{eq_1120}) holds true.
In Section \ref{sec_diffusion} we proved that $M_{\mu}^*$ is stochastically complete.
Since $M_{\mu^*}$ is isomorphic to $M_{\mu}$, then  $M_{\mu}$ is also stochastically complete.

\medskip Let us describe in greater detail the diffusion process associated with $M_{\mu} = (K, \nabla^2 \vphi, \mu)$.
Recall that the Legendre transform $\vphi = \psi^*$
is smooth and convex on $K$, and that
$$  \vphi(x) + \psi(\nabla \vphi(x)) = x \cdot \nabla \vphi(x) \quad \quad \quad \quad (x \in K).
$$
We may rephrase (\ref{eq_1120}) in terms of $\vphi = \psi^*$, and using
$(\nabla^2 \vphi(x))^{-1} = \nabla^2 \psi(\nabla \vphi(x))$, we
arrive at the equation
\begin{equation} \det \nabla^2 \vphi(x) = e^{x \cdot \nabla \vphi(x) - \vphi(x) - \rho(x)}
\quad \quad \quad \quad (x \in K). \label{eq_1234}
\end{equation}
The Hessian matrix $\nabla^2 \vphi$ is invertible everywhere, so we write $\left( \nabla^2 \vphi(x) \right)^{-1} = ( \vphi^{ij}(x) )_{i,j=1,\ldots,n}$, and
as before we also use abbreviations such as  $\vphi^{jk}_{i} = \vphi^{j \ell} \vphi^{k m} \vphi_{i \ell m}$. In this section, for a smooth function $u: K \rightarrow \RR$,  denote
\begin{equation}
L u(x) = \vphi^{ij} u_{ij} - x^i u_i \quad \quad \quad \quad \text{for} \ x = (x^1,\ldots,x^n) \in K.
\label{eq_1105} \end{equation}

\begin{lemma}
The operator $L$ from (\ref{eq_1105}) is the Laplacian associated with the weighted Riemannian manifold $M_{\mu}$.
\end{lemma}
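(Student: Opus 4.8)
The plan is to verify directly that the operator $L$ defined in (\ref{eq_1105}) satisfies the defining identity (\ref{eq_226}) for the Laplacian of the weighted Riemannian manifold $M_{\mu} = (K, \nabla^2 \vphi, \mu)$; that is, I would show that $\int_K (Lu) v \, d\mu = -\Gamma(u,v)$ for every smooth $u: K \to \RR$ and every compactly-supported smooth $v: K \to \RR$, where $\Gamma$ is the Dirichlet form of $M_{\mu}$. First I would record the explicit form of $\Gamma$: since the metric tensor is $\nabla^2 \vphi$ and the reference measure is $\mu$ with density $e^{-\rho}$, the Riemannian gradient of $u$ has components $\vphi^{ij} u_j$ and the Riemannian inner product of two gradients is $\vphi^{ij} u_i v_j$, so that $\Gamma(u,v) = \int_K \vphi^{ij} u_i v_j \, e^{-\rho(x)} dx$. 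This is entirely analogous to (\ref{eq_947}) with $\psi$ replaced by $\vphi$ and $V$ replaced by $\rho$.

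Next I would integrate by parts in the $x^j$ variable, legitimate because $v$ has compact support in $K$. This gives
$$ \Gamma(u,v) = -\int_K v \, \partial_j\!\left( \vphi^{ij} u_i \, e^{-\rho} \right) dx = -\int_K v \left[ \vphi^{ij} u_{ij} + (\partial_j \vphi^{ij}) u_i - \vphi^{ij} u_i \rho_j \right] e^{-\rho} dx. $$
Using $\partial_j \vphi^{ij} = -\vphi^{ij}_j$ (the analogue of the identity displayed just before (\ref{eq_1128})), the bracket becomes $\vphi^{ij} u_{ij} - (\vphi^{ij}_j + \vphi^{ij} \rho_j) u_i$. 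Hence the operator characterized by (\ref{eq_226}) is $u \mapsto \vphi^{ij} u_{ij} - (\vphi^{ij}_j + \vphi^{ij}\rho_j) u_i$, which is the exact dual of formula (\ref{eq_1128}). It therefore remains to show that $\vphi^{ij}_j + \vphi^{ij}\rho_j = x^i$ for every $i$, so that this operator coincides with $L$ in (\ref{eq_1105}).

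The main obstacle — really the only substantive step — is establishing that identity $\vphi^{ij}_j + \vphi^{ij}\rho_j = x^i$, and this is where equation (\ref{eq_1234}) enters. Taking the logarithm of (\ref{eq_1234}) gives $\log\det \nabla^2\vphi(x) = x\cdot\nabla\vphi(x) - \vphi(x) - \rho(x)$; differentiating with respect to $x^\ell$ and using $\partial_\ell \log\det\nabla^2\vphi = \vphi^{ij}\vphi_{ij\ell} = \vphi^i_{i\ell}$, together with $\partial_\ell(x\cdot\nabla\vphi - \vphi) = x^i \vphi_{i\ell}$, yields $\vphi^i_{i\ell} = x^i\vphi_{i\ell} - \rho_\ell$. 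Multiplying by $\vphi^{j\ell}$ and summing over $\ell$ gives $\vphi^{ij}_i = x^j - \vphi^{j\ell}\rho_\ell$, i.e. $\vphi^{ij}_j + \vphi^{ij}\rho_j = x^i$ after relabeling indices. Substituting this into the dual of (\ref{eq_1128}) produces $Lu = \vphi^{ij}u_{ij} - x^i u_i$ exactly as in (\ref{eq_1105}), completing the proof. The computation mirrors Lemma \ref{lem_950} and its proof almost verbatim, so one could alternatively phrase the whole argument as an application of that lemma under the canonical isomorphism $x \mapsto \nabla\psi(x)$ between $M_{\mu}$ and $M_{\mu}^*$, noting that $V = \psi$ and that the dual Laplacian transforms by the chain rule; but the direct verification above is cleaner and self-contained.
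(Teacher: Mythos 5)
Your proposal is correct and follows essentially the same route as the paper: take the logarithmic derivative of (\ref{eq_1234}) to obtain the identity $\vphi^{ij}_j + \vphi^{ij}\rho_j = x^i$ (the paper's (\ref{eq_1301})), then integrate the Dirichlet form $\int_K \vphi^{ij}u_i v_j\,d\mu$ by parts against a compactly supported $v$. Your write-up merely spells out the steps the paper compresses into ``arguing as in the proof of Lemma \ref{lem_950}''.
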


\begin{proof} By taking the logarithmic derivative of (\ref{eq_1234}) and arguing as in the proof of Lemma \ref{lem_950},
we obtain that for any $x \in K, i=1,\ldots,n$,
\begin{equation}
\vphi^{i j}_{j} = x^i - \vphi^{i j} \rho_{j}. \label{eq_1301} \end{equation}
Integrating by parts and using (\ref{eq_1301}), we see that for any two smooth functions $u,v: K \rightarrow \RR$
with one of them compactly-supported,
\begin{align*}
\int_K \vphi^{ij} u_i v_j d \mu = -\int_K v (\vphi^{ij} u_{ij} - (\vphi^{ij}_j + \vphi^{ij} \rho_j) u_i) e^{-\rho} = -\int_K v (Lu) d \mu.
\tag*{\qedhere} \end{align*}
\end{proof}

\begin{lemma} Fix $\eps > 0$. For $x \in K$ set $f(x) = \vphi^{11}(x)$. Then,
$$ L \left( f^{\eps} \right) + \eps f^{\eps} \geq 0. $$ \label{lem_1622}
\end{lemma}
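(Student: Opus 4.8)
The plan is to compute $L(f^{\eps})$ explicitly via the chain rule, reducing everything to information about $Lf$, $L(\log f)$, and the Dirichlet-form density $\vphi^{ij}(\log f)_i(\log f)_j$. Writing $g = \log f = \log \vphi^{11}$, we have $f^{\eps} = e^{\eps g}$, so
\begin{equation*}
L(f^{\eps}) = f^{\eps}\left( \eps\, Lg + \eps^2\, \vphi^{ij} g_i g_j \right),
\end{equation*}
using $L(e^{\eps g}) = e^{\eps g}(\eps Lg + \eps^2 \vphi^{ij}g_ig_j)$, which follows from (\ref{eq_1105}) since $L$ has no zeroth-order term and the second-order part is a pure trace against $\nabla^2\vphi^{-1}$. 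Hence the claimed inequality $L(f^{\eps}) + \eps f^{\eps} \geq 0$ is equivalent, after dividing by the positive quantity $\eps f^{\eps}$, to
\begin{equation*}
Lg + \eps\, \vphi^{ij} g_i g_j + 1 \geq 0.
\end{equation*}
Since $\eps > 0$ and $\vphi^{ij}g_ig_j \geq 0$ (as $\nabla^2\vphi$ is positive-definite), it suffices to prove the stronger, $\eps$-free inequality $Lg + 1 \geq 0$, i.e. $L(\log \vphi^{11}) \geq -1$ pointwise on $K$.

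The heart of the matter is therefore a Bochner-type / second-derivative computation for the specific operator $L$ of (\ref{eq_1105}) applied to $\log \vphi^{11}$. First I would differentiate the transport equation (\ref{eq_1234}), or rather its logarithmic-derivative consequence (\ref{eq_1301}), a second time to get an expression for the second derivatives of $\log\det\nabla^2\vphi$, and feed this into the commutation of $L$ with the coordinate second derivative $\partial_1\partial_1$. Concretely, the standard route is: compute $L(\vphi^{11})$ by differentiating twice the identity $\vphi^{ik}\vphi_{kj} = \delta^i_j$ to express derivatives of $\vphi^{11}$ through $\vphi^{ij}_k$ and then through $\vphi_{ijk}$, then apply $L$ and use (\ref{eq_1234})–(\ref{eq_1301}) to eliminate the third derivatives of $\vphi$ in favor of lower-order curvature terms. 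A cleaner organizing principle, which I expect the author uses, is to differentiate the equation $\partial_1\partial_1\big(\log\det\nabla^2\vphi\big) = \partial_1\partial_1\big(x\cdot\nabla\vphi - \vphi - \rho\big)$ and recognize the left side as producing both $L(\vphi^{11})$-type terms and a manifestly nonnegative quadratic form (the analogue of $|\nabla^2|^2 \geq (\Delta/n)^2$, or here a Cauchy–Schwarz bound $\vphi^{ij}\vphi^{k\ell}\vphi_{ik1}\vphi_{j\ell 1} \geq (\vphi^{jk}\vphi_{jk1})^2/\ldots$), while the right side contributes $-\rho_{11} \leq 0$ together with terms like $x^i(\cdot)_i$ that reassemble into $-x^i(\log\vphi^{11})_i$. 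Tracking signs, the $-\rho_{11}$ term is harmless because it comes with a favorable sign after the manipulation, and the convexity of $\rho$ is exactly what is needed; the $+1$ on the right of $Lg+1\geq0$ should emerge as the constant coming from differentiating the linear term $x\cdot\nabla\vphi$ in (\ref{eq_1234}) (which contributes a $+1$ per differentiation through the $x^i\vphi_{i11}$ cross terms) — this is the arithmetic one must do carefully.

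The main obstacle will be the bookkeeping in the Bochner computation: correctly deriving the formula for $L(\log \vphi^{11})$ in terms of the third derivatives $\vphi_{ijk}$ and the matrix $\vphi^{ij}$, and then isolating a sum of an obviously nonnegative quadratic form (so it can be discarded) plus exactly the terms $-\vphi^{ij}\rho_{ij}$-type contributions (discarded by convexity of $\rho$, i.e.\ $\nabla^2\rho \geq 0$) plus precisely the quantity $-x^i(\log\vphi^{11})_i - 1$, which is what is subtracted off. One subtlety to watch is whether one actually needs the full strict-convexity hypothesis $(\star)$ here or merely $\nabla^2\rho\geq 0$ (which holds by log-concavity): for this lemma plain convexity of $\rho$ should suffice, since we only ever throw away $\vphi^{ij}\rho_{ij}$ with a sign, so the argument runs under the standing assumptions (\arabic{eq_935}) alone. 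Once $Lg + 1 \geq 0$ is established, the reduction in the first paragraph closes the proof immediately, and one notes that the choice of the index $1$ is irrelevant — the same bound holds for $\vphi^{\theta\theta}$ for any fixed unit vector $\theta$, since $L$ is invariant under the linear change of coordinates even though its coefficients are not (the operator $L$ is canonically attached to $M_\mu$).
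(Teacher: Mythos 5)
Your reduction is fine as far as it goes: with $g=\log\vphi^{11}$ the identity $L(e^{\eps g})=e^{\eps g}\bigl(\eps Lg+\eps^2\vphi^{ij}g_ig_j\bigr)$ is correct, so the lemma would indeed follow from the $\eps$-free statement $Lg+1\geq 0$, and you are also right that only $\nabla^2\rho\geq 0$ (log-concavity) is needed here, not the strict-convexity hypothesis of Section \ref{sec_apriori}. The problem is that the part you defer as ``bookkeeping'' is the entire content of the lemma, and the way you propose to handle it would fail. Differentiating (\ref{eq_1234}) twice gives, as in the paper's (\ref{eq_1455}),
$$ Lf \;=\; \vphi^{1j}_k\vphi^{1k}_j-\vphi^{11}+\rho_{i\ell}\vphi^{1i}\vphi^{1\ell}\;\geq\;\vphi^{1j}_k\vphi^{1k}_j-f,\qquad f=\vphi^{11}, $$
and hence
$$ Lg+1\;\geq\;\frac1f\Bigl[\vphi^{1j}_k\vphi^{1k}_j-\frac{\vphi^{11j}\vphi^{11}_j}{\vphi^{11}}\Bigr]. $$
Passing to the logarithm introduces the \emph{negative} term $-\vphi^{ij}f_if_j/f^2$, so the nonnegative third-derivative quadratic form $\vphi^{1j}_k\vphi^{1k}_j$ cannot simply be ``isolated and discarded'': it must dominate that negative term. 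Discarding it, as your outline suggests, yields only $Lg\geq -1-\vphi^{11j}\vphi^{11}_j/(\vphi^{11})^2$, which is not the desired bound; and the one Cauchy--Schwarz-type inequality you do write, $\vphi^{ij}\vphi^{k\ell}\vphi_{ik1}\vphi_{j\ell1}\geq(\vphi^{jk}\vphi_{jk1})^2/\ldots$, is left incomplete and is in any case not the relevant one.

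The missing ingredient is precisely the paper's inequality (\ref{eq_1521}),
$$ \vphi^{1j}_k\vphi^{1k}_j\;\geq\;\frac{\vphi^{11j}\vphi^{11}_j}{\vphi^{11}}, $$
and it is a genuinely matrix-level estimate rather than a scalar Cauchy--Schwarz: writing $B=(\vphi^{1jk})$ (symmetric) and $A=(\vphi^{1j}_k)=(\nabla^2\vphi)B$, one has $\vphi^{1j}_k\vphi^{1k}_j=Tr(A^2)=\bigl\|(\nabla^2\vphi)^{1/2}B(\nabla^2\vphi)^{1/2}\bigr\|_{HS}^2$, and comparing the Hilbert--Schmidt norm with the operator norm tested on the vector $(\nabla^2\vphi)^{-1/2}e_1$ produces exactly the right-hand side. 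Until you supply this (or an equivalent) estimate, the proof is incomplete; once you do, your $\log$-reformulation and the paper's chain rule with $\lambda(t)=t^{\eps}$ in (\ref{eq_1502}) amount to essentially the same argument, with your version corresponding to the worst case $\eps\to 0^+$.
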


\begin{proof} For  $i,j=1,\ldots,n$,
$$ f_i = (\vphi^{11})_i = -\vphi^{1k} \vphi^{1 \ell} \vphi_{ik\ell}, \quad \quad f_{ij} = -\vphi^{11}_{ij}
+ 2 \vphi^{1k}_j \vphi^1_{i k}. $$
Therefore,
\begin{equation}
 L f = \vphi^{ij} f_{ij} - x^i f_i = -\vphi^{11j}_j + 2 \vphi^{1j}_i \vphi^{1i}_{j} + x^j \vphi^{11}_j. \label{eq_1525} \end{equation}
Taking the logarithm of (\ref{eq_1234}) and differentiating with respect to $x^i$ and $x^{\ell}$, we
see that
$$ \vphi^j_{ji \ell} - \vphi_i^{jk} \vphi_{jk\ell} = -\rho_{i \ell} + \vphi_{i \ell} + x^j \vphi_{i \ell j} \quad \quad \quad (i,\ell=1,\ldots,n). $$
Multiplying by $\vphi^{1 i} \vphi^{1 \ell}$ and summing yields
\begin{equation}
 \vphi^{j11}_{j} - \vphi_k^{1j} \vphi^{1k}_{j} = -\vphi^{1 i} \vphi^{1 \ell} \rho_{i \ell} + \vphi^{11} + x^j \vphi^{11}_{j}.
 \label{eq_1535} \end{equation}
 Since $\rho$ is convex then its Hessian matrix is non-negative definite and $\rho_{i \ell} \vphi^{1 i} \vphi^{1 \ell} \geq 0$.
 From (\ref{eq_1525}) and (\ref{eq_1535}),
 \begin{equation} L f = \vphi^{1j}_k \vphi^{1k}_{j} - \vphi^{11} + \rho_{i \ell} \vphi^{1 i} \vphi^{1 \ell} \geq \vphi^{1j}_k \vphi^{1k}_{j} - \vphi^{11} = \vphi^{1j}_k \vphi^{1k}_{j} -f.
 \label{eq_1455} \end{equation}
The chain rule of the Laplacian is $L (\lambda(f)) = \lambda^{\prime}(f) L f +
\lambda^{\prime \prime}(f) \vphi^{ij} f_j f_j$, as may be verified directly. Using the chain rule with $\lambda(t) =t^{\eps}$ we see that
(\ref{eq_1455}) leads to
$$
L \left( f^{\eps} \right) = \eps f^{\eps - 1} L f + \eps (\eps - 1) f^{\eps - 2} \vphi^{11j} \vphi^{11}_j \geq
 \eps f^{\eps - 1} \vphi^{1j}_k \vphi^{1k}_{j} -\eps f^{\eps} + \eps (\eps - 1) f^{\eps - 2} \vphi^{11j} \vphi^{11}_j. $$
That is,
\begin{equation}
L \left( f^{\eps} \right) + \eps f^{\eps} \geq \eps f^{\eps-1} \left[ \vphi^{1j}_k \vphi^{1k}_{j} + (\eps - 1) \frac{\vphi^{11j} \vphi^{11}_j}{\vphi^{11}}
\right] \geq \eps f^{\eps-1} \left[ \vphi^{1j}_k \vphi^{1k}_{j} - \frac{\vphi^{11j} \vphi^{11}_j}{\vphi^{11}}
\right], \label{eq_1502}
\end{equation}
 where we used the fact that $\vphi^{11j} \vphi^{11}_j \geq 0$ in the last passage (or more generally, $\vphi^{ij} h_i h_j \geq 0$ for
 any smooth function $h$). It remains to show that the right-hand side of (\ref{eq_1502}) is non-negative.
 Denote $A = (\vphi^{1j}_k)_{j,k=1,\ldots,n}$. The matrix $B = (\vphi^{1jk})_{j,k=1,\ldots,n}$ is a symmetric matrix, since
$\vphi^{1 jk} = \vphi^{1 \ell} \vphi^{j m} \vphi^{k r} \vphi_{\ell m r}$. We have $A = (\nabla^2 \vphi) B$, and hence
\begin{align*}
 \vphi^{1j}_k \vphi^{1k}_{j} & = Tr(A^2) = Tr \left[ \left( (\nabla^2 \vphi)^{1/2} B (\nabla^2 \vphi)^{1/2} \right)^2 \right] =
 \left \| (\nabla^2 \vphi)^{1/2} B (\nabla^2 \vphi)^{1/2} \right \|_{HS}^2,
\end{align*}
since the matrix $(\nabla^2 \vphi)^{1/2} B (\nabla^2 \vphi)^{1/2}$ is symmetric,  where $\| T \|_{HS}$ stands for the Hilbert-Schmidt norm of the matrix $T$. We will use
 the fact that the Hilbert-Schmidt norm is at least as large as the operator norm, that is,
 $\| T \|_{HS}^2 \geq |T x|^2 / |x|^2$ for any $0 \neq x \in \RR^n$. Setting $e_1=(1,0,\ldots,0)$, we conclude that
 \begin{equation}
 \vphi^{1j}_k \vphi^{1k}_{j} \geq \frac{ \left| (\nabla^2 \vphi)^{1/2} B (\nabla^2 \vphi)^{1/2} (\nabla^2 \vphi)^{-1/2} e_1 \right|^2}{
 \left| (\nabla^2 \vphi)^{-1/2} e_1 \right|^2} = \frac{ \vphi^{11i} \vphi_{ij} \vphi^{11j}}{\vphi^{11}}
= \frac{\vphi^{11}_j \vphi^{11j}}{\vphi^{11}}. \label{eq_1521}
 \end{equation}
 The lemma follows from (\ref{eq_1502}) and (\ref{eq_1521}).
\end{proof}

Let $(B_t)_{t \geq 0}$ be the standard $n$-dimensional Brownian motion.
From the results of Section \ref{sec_diffusion}, the diffusion process whose generator is $L$ from (\ref{eq_1105}) is well-defined. That is,
there exists a unique stochastic process $(X_t^{(z)})_{t \geq 0, z \in K}$, continuous in $t$ and adapted to the filtration
induced by the Brownian motion, such that
for all $t \geq 0$,
\begin{equation}  X^{(z)}_{t} = z + \int_0^{t} \sqrt{2} \left( \nabla^2 \vphi \left(X_t^{(z)} \right) \right)^{-1/2} d B_t - \int_0^{t} X_t^{(z)} dt.
\label{eq_1328} \end{equation}
Our proof of Theorem \ref{thm2} relies on a few lemmas in which the main technical obstacle  is
to prove the integrability of certain local martingales.
\begin{lemma} Fix $z \in K$ and set $X_t = X_t^{(z)} \ (t \geq 0)$.  Then for any $t \geq 0$,
\begin{equation}
 \EE X_t = e^{-t} z,
 \label{eq_1448_} \end{equation}
and for any $\theta \in S^{n-1}$,
\begin{equation}
 e^{2 t} \EE  (X_t \cdot \theta)^2
\geq (z \cdot \theta)^2 +  2 \int_0^t e^{2s} \EE \left[ (\nabla^2 \vphi)^{-1} (X_s) \theta \cdot \theta \right] ds.
\label{eq_1448}
\end{equation}
\label{lem_1450}
\end{lemma}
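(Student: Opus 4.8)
The plan is to apply Itô's formula to the one-dimensional processes $e^{t}(X_t\cdot\theta)$ and $\bigl(e^{t}(X_t\cdot\theta)\bigr)^2$ and then to discard the resulting local-martingale terms by a localization argument resting on the stochastic completeness of $M_\mu$ recorded above. Fix an exhaustion $K_1 \subseteq K_2 \subseteq \cdots$ of $K$ by compact sets and set $\tau_k = \inf\{ t \geq 0 : X_t \notin K_k \}$; since $M_\mu$ is stochastically complete, $\tau_k \to \infty$ almost surely. For $0 \leq s \leq \tau_k$ the point $X_s$ stays in $K_k$, on which the continuous matrix field $(\nabla^2\vphi)^{-1/2}$ is bounded, and moreover $|X_s| \leq R(K)$ for every $s$ because $X_s \in K$. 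These two bounds are the only integrability inputs that will be needed.

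For (\ref{eq_1448_}): since $\nabla^2\vphi$ is symmetric, Itô's formula applied to (\ref{eq_1328}) gives
$$ d\bigl(e^{t} X_t\bigr) = \sqrt{2}\, e^{t} \bigl(\nabla^2\vphi(X_t)\bigr)^{-1/2}\, dB_t , $$
so that $e^{t} X_t - z$ is a local martingale. Stopped at $\tau_k$, the stochastic integral has bounded integrand on $[0,t]$ and is therefore a genuine martingale, so $\EE\bigl[e^{\, t\wedge\tau_k} X_{t\wedge\tau_k}\bigr] = z$. Letting $k \to \infty$ and invoking dominated convergence (the stopped process is bounded by $e^{t} R(K)$, and $X_{t\wedge\tau_k} \to X_t$ by path-continuity) yields $\EE[e^{t} X_t] = z$, which is (\ref{eq_1448_}).

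For (\ref{eq_1448}), set $N_t = e^{t}(X_t\cdot\theta)$. The same computation shows $dN_t = \sqrt{2}\, e^{t}\,\bigl[(\nabla^2\vphi(X_t))^{-1/2}\theta\bigr]\cdot dB_t$, hence $d\langle N\rangle_t = 2 e^{2t}\,\bigl[(\nabla^2\vphi(X_t))^{-1}\theta\cdot\theta\bigr]\, dt$, and Itô's formula for $N_t^2$ gives
$$ N_t^2 = (z\cdot\theta)^2 + 2\int_0^{t} N_s\, dN_s + 2\int_0^{t} e^{2s}\,\bigl[(\nabla^2\vphi(X_s))^{-1}\theta\cdot\theta\bigr]\, ds . $$
On $[0, t\wedge\tau_k]$ the integrand of $\int_0^{\cdot} N_s\, dN_s$ is bounded (since $|N_s| \leq e^{t} R(K)$ and $(\nabla^2\vphi)^{-1/2}$ is bounded on $K_k$), so the stopped integral is a true martingale of zero expectation. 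Taking expectations in the identity stopped at $\tau_k$ and letting $k \to \infty$ — using dominated convergence on the left-hand side ($N_{t\wedge\tau_k}^2 \leq e^{2t} R^2(K)$) and monotone convergence on the right, where the last integrand is non-negative because $(\nabla^2\vphi)^{-1}$ is positive-definite — we arrive, via Fubini, at
$$ e^{2t}\,\EE(X_t\cdot\theta)^2 = (z\cdot\theta)^2 + 2\int_0^{t} e^{2s}\,\EE\bigl[(\nabla^2\vphi)^{-1}(X_s)\theta\cdot\theta\bigr]\, ds , $$
which in particular implies (\ref{eq_1448}).

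The main obstacle, as flagged in the sentence preceding the statement, is precisely the passage from local to true martingales for $e^{t} X_t - z$ and for $\int_0^{t} N_s\, dN_s$: the diffusion coefficient $(\nabla^2\vphi)^{-1/2} = \bigl(\nabla^2\psi\circ\nabla\vphi\bigr)^{1/2}$ is not yet known to be bounded on all of $K$ (this boundedness is essentially the content of Theorem \ref{thm2}), so no global $L^2$-criterion is available. Localizing by the exit times $\tau_k$, which tend to infinity by stochastic completeness, reduces every stochastic integral to one with bounded integrand; the uniform bound $|X_s| \leq R(K)$ then legitimizes the limit $k \to \infty$ on the left-hand sides, while the positive-definiteness of $(\nabla^2\vphi)^{-1}$ controls the sign of the remaining integral term.
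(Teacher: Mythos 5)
Your proof is correct, and it reaches the conclusion by a genuinely different route than the paper at the one delicate point, namely the treatment of the local-martingale terms. The paper never localizes: for (\ref{eq_1448_}) it observes that $(e^tX_t)_{0\le t\le T}$ is a \emph{bounded} local martingale (since $X_t\in K$ and $K$ is bounded), hence a martingale; for (\ref{eq_1448}) it writes $Z_t=e^{2t}(X_t\cdot\theta)^2=(z\cdot\theta)^2+M_t+\int_0^t 2e^{2s}\vphi^{\theta\theta}(X_s)\,ds$ and, since the drift is non-negative and $Z_t$ is bounded, concludes only that $M_t$ is bounded from above, hence (via Fatou) a sub-martingale with $\EE M_t\ge 0$ --- which yields exactly the stated inequality and nothing more. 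You instead stop at exit times $\tau_k$ of a compact exhaustion of $K$, use that $(\nabla^2\vphi)^{-1/2}$ is bounded on each compact subset of $K$ so the stopped stochastic integrals are true (square-integrable) martingales, and pass to the limit by dominated/monotone convergence using $\tau_k\to\infty$ (which is precisely the stochastic completeness established in Section \ref{sec_diffusion}, since the paths are continuous and stay in the open set $K$ for all time). The payoff of your version is that you obtain \emph{equality} in (\ref{eq_1448}) at this stage, without any appeal to Theorem \ref{thm2}; the paper only records this equality a posteriori (in the remark following the lemma, via Theorem \ref{thm2} and (\ref{eq_956})). The cost is the extra localization machinery, which the paper's boundedness/Fatou argument avoids. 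One small point of hygiene: for the step ``every compact subset of $K$ lies in some $K_k$'' you should take a proper exhaustion, e.g.\ $K_k\subseteq\operatorname{int}K_{k+1}$ with $\bigcup_k K_k=K$; with that standard convention your argument is complete.
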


\begin{proof} From It\^o's formula and (\ref{eq_1328}),
$$
 d(e^t X_t) = e^t d X_t + e^t X_t dt = \sqrt{2} e^t \left( \nabla^2 \vphi(X_t) \right)^{-1/2} d B_t. $$
Therefore $(e^t X_t)_{0 \leq t \leq T}$ is a local martingale, for any fixed number $T > 0$. However, $e^t X_t \in e^T K$ for $0 \leq t \leq T$,
and $K \subset \RR^n$ is a bounded set. Therefore $(e^t X_t)_{0 \leq t \leq T}$ is a bounded process, and hence it is a martingale. We conclude that
$$ \EE e^t X_t = \EE e^0 X_0 = z \quad \quad \quad \quad (t \geq 0), $$
and (\ref{eq_1448_}) is proven. It remains to prove (\ref{eq_1448}).
Without loss of generality we may assume that $\theta = e_1 = (1,0,\ldots,0)$.
Denote $Z_t = e^{2t} (X_t \cdot e_1)^2$. According to (\ref{eq_1328}) and It\^o's formula, for any $t \geq 0$,
\begin{equation} Z_t = (z \cdot e_1)^2 + M_t  + \int_0^t \left( 2 e^{2s} \vphi^{11}(X_s) \right) ds, \label{eq_956} \end{equation}
where $(M_t)_{t \geq 0}$ is a local martingale with $M_0 = 0$. Since $\vphi^{11}$ is positive, then for any $t \geq 0$,
\begin{equation} Z_t - (z \cdot e_1)^2 \geq M_t. \label{eq_1223} \end{equation}
 Since $K$ is bounded, then $(Z_t)_{0 \leq t \leq T}$ is a bounded process,
for any number $T > 0$. According to (\ref{eq_1223}), the local martingale $(M_t)_{0 \leq t \leq T}$
is bounded from above, and by Fatou's lemma it is a sub-martingale.
In particular $\EE M_t \geq \EE M_0 = 0$ for any $t$.
From (\ref{eq_956}),
$$ \EE Z_t \geq (z \cdot e_1)^2 + 2 \EE \int_0^t e^{2s} \vphi^{11}(X_s)  ds
\quad \quad \quad \quad (t \geq 0).
$$
Since $\EE Z_t < +\infty$ and $\vphi^{11}$ is positive,  we may use Fubini's theorem to conclude that for any $t \geq 0$,
\begin{align*}
 \EE Z_t \geq (z \cdot e_1)^2 +  2 \int_0^t e^{2s} \EE \vphi^{11}(X_s) ds. \tag*{\qedhere} \end{align*}
\end{proof}

\begin{remark}{\rm Once Theorem \ref{thm2} is established, we can prove that equality holds in (\ref{eq_1448}).
Indeed, it follows from Theorem \ref{thm2} and (\ref{eq_956}) that $(M_t)_{0 \leq t \leq T}$ is a bounded process and
hence a martingale.
}\end{remark}

\begin{lemma}
Assume that
the convex body $K$ has a smooth boundary
and that its Gauss curvature is positive everywhere. Assume also that there exists $\eps_0 > 0$ with
\begin{equation}  \nabla^2 \rho(x)  \geq \eps_0 \cdot Id \quad \quad \quad \quad (x \in K) \label{eq_1229} \end{equation}
in the sense of symmetric matrices. Fix $z \in K$ and set $X_t = X_t^{(z)} \ (t \geq 0)$. Denote $f(x) = \vphi^{11}(x)$ for $x \in K$. Then, for any $t, \eps > 0$,
\begin{equation}
f(z) \leq e^t \left( \EE f^{\eps}(X_t) \right)^{1/\eps}. \label{eq_1233}
  \end{equation}
\label{lem_1229}
\end{lemma}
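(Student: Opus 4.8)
\emph{Proof proposal.} The plan is to apply It\^o's formula to the process $s \mapsto e^{\eps s} f^{\eps}(X_s)$, where $X_s = X_s^{(z)}$, and to combine the differential inequality of Lemma \ref{lem_1622} with the second-derivative bound of Proposition \ref{prop_1101}. The hypotheses of the present lemma are exactly the strict-convexity assumptions ($\star$) of Section \ref{sec_apriori}, so Proposition \ref{prop_1101} furnishes a constant $C_0 < \infty$ with $\| \nabla^2 \psi(x) \| \leq C_0$ for all $x \in \RR^n$. Since $\left( \nabla^2 \vphi(x) \right)^{-1} = \nabla^2 \psi(\nabla \vphi(x))$ for $x \in K$, and $\nabla^2 \psi$ is positive-definite, this gives
$$ 0 < f(x) = \vphi^{11}(x) = e_1 \cdot \left( \nabla^2 \vphi(x) \right)^{-1} e_1 \leq C_0 \qquad (x \in K). $$
In particular $f$ and $f^{\eps}$ are bounded on $K$, and, being a positive power of a smooth positive function, $f^{\eps}$ is smooth on $K$.

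Recall that $X_s$ remains in $K$ for all $s \geq 0$, since $M_{\mu}$ is stochastically complete (Section \ref{sec_diffusion}). A routine application of It\^o's formula to (\ref{eq_1328}), using the form (\ref{eq_1105}) of $L$, gives $du(X_s) = (\text{local martingale increment}) + (Lu)(X_s)\, ds$ for any smooth $u : K \to \RR$. Applying this with $u = f^{\eps}$ and integrating $d\!\left( e^{\eps s} u(X_s) \right) = e^{\eps s}\!\left( \eps u(X_s) + (Lu)(X_s) \right) ds + e^{\eps s} du(X_s)$, we arrive at
$$ e^{\eps t} f^{\eps}(X_t) = f^{\eps}(z) + R_t + \int_0^t e^{\eps s} \left[ L\!\left(f^{\eps}\right)\!(X_s) + \eps f^{\eps}(X_s) \right] ds , $$
where $(R_s)_{0 \leq s \leq t}$ is a continuous local martingale with $R_0 = 0$. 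By Lemma \ref{lem_1622} the integrand $L(f^{\eps}) + \eps f^{\eps}$ is non-negative on $K$, hence the last integral is non-negative.

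The key step is to deduce that $\EE R_t \geq 0$. Because $X_s \in K$, $0 < f \leq C_0$ on $K$, and $s \leq t$, the process $e^{\eps s} f^{\eps}(X_s)$ is bounded on $[0,t]$; together with the non-negativity of the integral term this yields $R_s \leq e^{\eps s} f^{\eps}(X_s) - f^{\eps}(z) \leq C$ on $[0,t]$ for some finite constant $C = C(t,\eps,C_0,z)$. Thus $C - R_s$ is a non-negative continuous local martingale, hence a super-martingale by Fatou's lemma (exactly as in the proof of Lemma \ref{lem_1450}), so $\EE[C - R_t] \leq C$, i.e.\ $\EE R_t \geq 0$. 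Taking expectations in the displayed identity, using $\EE R_t \geq 0$ and the non-negativity of the integral term (all quantities being finite, since $f^{\eps}$ is bounded on $K$ and $\EE R_t \in [0,C]$), we conclude $e^{\eps t}\, \EE\!\left[ f^{\eps}(X_t) \right] \geq f^{\eps}(z)$, and (\ref{eq_1233}) follows by raising both sides to the power $1/\eps$.

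The single genuine obstacle is this last passage from a local martingale to a super-martingale, and it is precisely what forces the strict-convexity hypotheses into the statement: Lemma \ref{lem_1622} holds under mere convexity of $\rho$, but in order to tame the local martingale $R_t$ one needs $f = \vphi^{11}$ to be bounded on $K$, and this boundedness is exactly what Proposition \ref{prop_1101} provides --- via an argument that does use the strict convexity of $\rho$ and the positive curvature of $\partial K$.
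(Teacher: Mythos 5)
Your proposal is correct and is essentially the paper's own argument: apply It\^o's formula to $e^{\eps s} f^{\eps}(X_s)$, invoke Lemma \ref{lem_1622} to drop the non-negative drift term, use the uniform bound $f \leq M$ from Proposition \ref{prop_1101} (valid under exactly these strict-convexity hypotheses) to see that the local martingale is bounded from above on $[0,T]$, hence a sub-martingale with non-negative expectation, and take expectations. The only difference is cosmetic (you phrase the sub-martingale step via the non-negative super-martingale $C - R_s$, the paper states it directly), so there is nothing to add.
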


\begin{proof} Our assumptions enable the application of Proposition \ref{prop_1101}. According to the conclusion of Proposition \ref{prop_1101},
there exists $M > 0$ such that
$$ \nabla^2 \psi(y) \leq M \cdot Id \quad \quad \quad \quad (y \in \RR^n). $$
Since $(\nabla^2 \vphi)^{-1}(x) = \nabla^2 \psi(\nabla \vphi(x))$, then,
\begin{equation}
 f(x) = \vphi^{11}(x) \leq M \quad \quad \quad \quad (x \in K). \label{eq_1231}
 \end{equation}
 From It\^o's formula and (\ref{eq_1328}),
\begin{equation}
 e^{\eps t} f^{\eps}(X_t) = f^{\eps}(z) + M_t + \int_0^t e^{\eps s} \left[ (L f^{\eps})(X_s) + \eps f^{\eps}(X_s) \right] ds, \label{eq_1015_}
 \end{equation}
where $M_t$ is a local martingale with $M_0 = 0$. According to (\ref{eq_1015_}) and Lemma \ref{lem_1622}, for any $t \geq 0$,
\begin{equation} e^{\eps t} f^{\eps}(X_t) \geq f^{\eps}(z) + M_t. \label{eq_1626} \end{equation}
We may now use (\ref{eq_1231}) and (\ref{eq_1626}) in order to conclude that the local martingale $(M_t)_{0 \leq t \leq T}$ is bounded from above,
for any number $T > 0$.
Hence it is a sub-martingale, and $\EE M_t \geq \EE M_0 = 0$ for any $t \geq 0$.
Now (\ref{eq_1233}) follows by taking
the expectation of (\ref{eq_1626}).
 \end{proof}

\begin{remark}{\rm
We will only use (\ref{eq_1233}) for $\eps = 1$, even though the statement for a small $\eps$ is much stronger. In the limit
where $\eps$ tends to zero, it is not too difficult to prove that the right-hand side of (\ref{eq_1233}) approaches $\exp(t + \EE \log f(X_t))$.
}\end{remark}

The covariance matrix of a square-integrable random vector $Z = (Z_1,\ldots,Z_n) \in \RR^n$ is defined to be
$$ Cov(Z) = \left( \EE Z_i Z_j - \EE Z_i \cdot \EE Z_j \right)_{i,j=1,\ldots,n}. $$

\begin{corollary}Assume that
the convex body $K$ has a smooth boundary
and that its Gauss curvature is positive everywhere. Assume also that there exists $\eps_0 > 0$ with
\begin{equation}  \nabla^2 \rho(x)  \geq \eps_0 \cdot Id \quad \quad \quad \quad (x \in K).
\label{eq_1141_} \end{equation}
Then for any $z \in K$ and $t > 0$,
$$ (\nabla^2 \vphi)^{-1}(z) \leq \frac{e^{2t}}{2(e^t-1)} \cdot Cov \left(X_t^{(z)} \right)
 $$
 in the sense of symmetric matrices.
 \label{cor_1501}
\end{corollary}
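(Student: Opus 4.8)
The plan is to combine Lemma~\ref{lem_1450} with the $\eps=1$ case of Lemma~\ref{lem_1229}. Fix $z \in K$, write $X_t = X_t^{(z)}$, and fix a unit vector $\theta \in S^{n-1}$. The first step is to observe that Lemma~\ref{lem_1229} remains valid when $f(x) = \vphi^{11}(x)$ is replaced by $f_{\theta}(x) = (\nabla^2 \vphi(x))^{-1}\theta\cdot\theta$: this follows either from the linear invariance of the construction of $M_{\mu}$ (apply an orthogonal map sending $e_1$ to $\theta$), or simply by rerunning the proofs of Lemma~\ref{lem_1622} and Lemma~\ref{lem_1229} with $e_1$ replaced by $\theta$ throughout — note that $f_\theta$ is still positive, and Proposition~\ref{prop_1101} still furnishes a uniform bound $f_\theta \le M$. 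Taking $\eps = 1$ in this directional statement, and applying it to the process started at $X_s$'s initial point $z$, gives for every $s > 0$
\begin{equation*}
(\nabla^2 \vphi(z))^{-1}\theta\cdot\theta \;=\; f_\theta(z) \;\le\; e^{s}\,\EE f_\theta(X_s) \;=\; e^{s}\,\EE\!\left[(\nabla^2 \vphi(X_s))^{-1}\theta\cdot\theta\right].
\end{equation*}

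Next I would insert this lower bound for $\EE[(\nabla^2 \vphi(X_s))^{-1}\theta\cdot\theta]$ into inequality (\ref{eq_1448}) of Lemma~\ref{lem_1450}. Since $(\nabla^2 \vphi(z))^{-1}\theta\cdot\theta$ does not depend on $s$, the integral term in (\ref{eq_1448}) is bounded below by
\begin{equation*}
2\,\big((\nabla^2 \vphi(z))^{-1}\theta\cdot\theta\big)\int_0^t e^{2s}e^{-s}\,ds \;=\; 2(e^t-1)\,\big((\nabla^2 \vphi(z))^{-1}\theta\cdot\theta\big),
\end{equation*}
whence $e^{2t}\,\EE(X_t\cdot\theta)^2 \ge (z\cdot\theta)^2 + 2(e^t-1)\,(\nabla^2 \vphi(z))^{-1}\theta\cdot\theta$. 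Now I would use the identity $\EE X_t = e^{-t}z$ from (\ref{eq_1448_}), which gives $e^{2t}(\EE(X_t\cdot\theta))^2 = (z\cdot\theta)^2$. Subtracting, the $(z\cdot\theta)^2$ terms cancel and the left-hand side becomes $e^{2t}\big(\EE(X_t\cdot\theta)^2 - (\EE(X_t\cdot\theta))^2\big) = e^{2t}\,Cov(X_t)\theta\cdot\theta$; this quantity is finite because $K$ is bounded and $X_t \in K$ almost surely, so $Cov(X_t)$ is well-defined. Dividing by $e^{2t}$ and rearranging yields
\begin{equation*}
(\nabla^2 \vphi(z))^{-1}\theta\cdot\theta \;\le\; \frac{e^{2t}}{2(e^t-1)}\,Cov\!\left(X_t^{(z)}\right)\theta\cdot\theta
\end{equation*}
for every $\theta \in S^{n-1}$ and every $t > 0$; the case $t>0$ is exactly what makes $e^t - 1 > 0$, so the constant is legitimate.

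Finally, since both sides of the last display are quadratic forms in $\theta$, the inequality extends from $S^{n-1}$ to all of $\RR^n$ by homogeneity, which is precisely the asserted inequality $(\nabla^2 \vphi)^{-1}(z) \le \tfrac{e^{2t}}{2(e^t-1)} Cov(X_t^{(z)})$ in the sense of symmetric matrices. I do not expect any substantial obstacle here: the only points requiring a word of justification are the passage to an arbitrary direction $\theta$ in Lemma~\ref{lem_1229} and the finiteness of $Cov(X_t^{(z)})$, both of which are immediate from the earlier results and the boundedness of $K$; everything else is bookkeeping with the two lemmas already proven.
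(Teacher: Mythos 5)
Your proof is correct and takes essentially the same route as the paper: the paper's proof of Corollary \ref{cor_1501} also combines inequality (\ref{eq_1448}) of Lemma \ref{lem_1450} with the $\eps=1$ case of Lemma \ref{lem_1229}, evaluates $\int_0^t e^{s}\,ds = e^t-1$, and converts the second moment into a variance via $\EE X_t^{(z)} = e^{-t}z$, handling a general direction $\theta$ by the same rotation-invariance reduction to $e_1$ that you invoke. No gaps.
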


\begin{proof} Fix $z \in K, t > 0$ and $\theta \in S^{n-1}$. We need to prove that
\begin{equation}
\left( \nabla^2 \vphi(z) \right)^{-1} \theta \cdot \theta \leq \frac{e^{2t}}{2(e^t-1)} Var(X_t^{(z)} \cdot \theta). \label{eq_1435}
\end{equation}
Without loss of generality we may assume that $\theta = e_1 = (1,0,\ldots,0)$.
We use Lemma \ref{lem_1450} and also Lemma \ref{lem_1229} with $\eps = 1$, and obtain
$$ e^{2 t} \EE  (X_t^{(z)} \cdot e_1)^2
\geq (z \cdot e_1)^2 +  2 \int_0^t e^{2s} \EE \vphi^{11} (X_s^{(z)}) ds
\geq (z \cdot e_1)^2 +  2 \vphi^{11}(z) \int_0^t e^{s} ds. $$
Recall that $\EE X_t^{(z)} = e^{-t} z$, according to Lemma \ref{lem_1450}. Consequently,
\begin{align*}
\vphi^{11}(z) \leq \frac{e^{2t}}{2(e^t-1)} \left( \EE  (X_t^{(z)} \cdot e_1)^2 - (e^{-t} z \cdot e_1)^2 \right) =
\frac{e^{2t}}{2(e^t-1)} Var(X_t^{(z)} \cdot e_1),
\end{align*}
and (\ref{eq_1435}) is proven for $\theta = e_1$.
\end{proof}

\begin{proof}[Proof of Theorem \ref{thm2}] Assume first that
the convex body $K$ has a smooth boundary, that
its Gauss curvature is positive everywhere, and that there exists $\eps_0$ for which (\ref{eq_1141_}) holds true.
We apply Corollary \ref{cor_1501} with $t = \log 2$, and conclude that for any $z \in K$,
$$ Tr\left[
(\nabla^2 \vphi)^{-1}(z) \right] \leq 2 Tr \left[ Cov(X_t^{(z)}) \right] \leq 2 \EE \left|X_t^{(z)} \right|^2 \leq 2 R^2(K) $$
as $X_t^{(z)} \in K$ almost surely. Therefore, for any $x \in \RR^n$, setting $z = \nabla \psi(x)$ we have
\begin{equation}
\Delta \psi(x) = Tr\left[
\nabla^2 \psi(x) \right] = Tr\left[
(\nabla^2 \vphi)^{-1}(z) \right] \leq 2 R^2(K). \label{eq_1504}
\end{equation}
It still remains to eliminate the extra strict-convexity assumptions.
To that end, we select a sequence of smooth convex bodies $K_{\ell} \subset \RR^n$, each with a
positive Gauss curvature, that converge in the Hausdorff metric to $K$. We then consider
a sequence of log-concave probability measures $\mu_{\ell}$ with barycenter at the origin that converge weakly
to $\mu$, such that
$\mu_{\ell}$ is supported on $K_{\ell}$ and such that the smooth density of $\mu_{\ell}$ satisfies (\ref{eq_1141_}) with,
say, $\eps_0 = 1/\ell$. We also assume that $\mu_{\ell}$ and $K_{\ell}$  satisfy the regularity conditions (\arabic{eq_935}).

\medskip It is not very difficult to construct the $\mu_{\ell}$'s: For instance, convolve $\mu$ with a
tiny Gaussian (this preserves log-concavity), multiply the density by $\exp(-|x|^2 / \ell)$, truncate with $K_{\ell}$ and translate
a little so that the barycenter would lie at the origin. This way we obtain a sequence of smooth, convex functions $\psi_{\ell}: \RR^n \rightarrow \RR$
such that $\mu_{\ell}$ is the moment measure of $\psi_{\ell}$. We may translate, and assume that $\psi$ and each of the $\psi_{\ell}'s$ are {\it centered},
in the terminology of Section \ref{sec_continuity}. According to (\ref{eq_1504}), we know that
\begin{equation}
\Delta \psi_{\ell}(x) \leq 2 R^2(K_{\ell}) \quad \quad \quad \quad (x \in \RR^n, \ell \geq 1). \label{eq_1519}
\end{equation}
Furthermore, $\mu_{\ell} \longrightarrow \mu$ weakly, and by Proposition \ref{prop_1130}, also $\psi_{\ell} \longrightarrow \psi$
pointwise in $\RR^n$. Since $\psi_{\ell}$ and $\psi$ are smooth, then \cite[Theorem 24.5]{roc} implies that
$$ \nabla \psi_{\ell}(x) \stackrel{\ell \rightarrow \infty}\longrightarrow \nabla \psi(x)
 \quad \quad \quad \quad (x \in \RR^n).
 $$
The function $\psi_{\ell}$
is $R(K_{\ell})$-Lipschitz, and $R(K_{\ell}) \longrightarrow
R(K)$. Hence $\sup_{\ell, x} |\nabla \psi_{\ell}(x)|$ is finite. By the bounded convergence theorem,
for any $x_0 \in \RR^n$ and $\eps > 0$,
\begin{equation}
 \int_{B(x_0, \eps)} \Delta \psi_{\ell} = \int_{\partial B(x_0, \eps)} \nabla \psi_{\ell} \cdot N
\stackrel{\ell \rightarrow \infty}\longrightarrow
\int_{\partial B(x_0, \eps)} \nabla \psi \cdot N = \int_{B(x_0, \eps)} \Delta \psi, \label{eq_859} \end{equation}
where $N$ is the outer unit normal. From (\ref{eq_1519}) and (\ref{eq_859}) we conclude that for any $x_0 \in \RR^n$ and $\eps > 0$,
$$ \int_{B(x_0, \eps)} \Delta \psi \leq  Vol_n( B(x_0, \eps) ) \cdot \limsup_{\ell \rightarrow \infty} 2 R^2(K_{\ell}) =
2 Vol_n( B(x_0, \eps) )  R^2(K), $$
where $Vol_n$ is the Lebesgue measure in $\RR^n$. Since $\psi$ is smooth, then we may let $\eps$ tend to zero and conclude that $\Delta \psi(x_0) \leq 2 R^2(K)$,
for any $x_0 \in \RR^n$.
\end{proof}

Posteriori, we may strengthen Corollary \ref{cor_1501} and eliminate the strict-convexity
assumptions. These assumptions were used only  in the proof of Lemma \ref{lem_1229},
to deduce the existence of some number $M > 0$ for which $\nabla^2 \psi(x) \leq M \cdot Id$, for all $x \in \RR^n$.
Theorem \ref{thm2} provides such a number $M = 2 R^2(K)$, without any strict-convexity assumptions
on $\rho$ or $K$. We may therefore upgrade Corollary \ref{cor_1501}, and conclude that

\begin{corollary} Whenever $\mu$ is a log-concave probability measure with barycenter at the origin,
satisfying the regularity conditions (\arabic{eq_935}), we have
$$
 (\nabla^2 \vphi)^{-1}(z) \leq \frac{e^{2t}}{2(e^t-1)} \cdot Cov \left( X_t^{(z)} \right)
 $$ in the sense of symmetric matrices,  for any $z \in K$ and $t > 0$.
 \label{cor_905}
\end{corollary}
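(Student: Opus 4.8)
The plan is to revisit the proof of Corollary \ref{cor_1501} and to observe that the strict-convexity hypotheses on $K$ and on $\rho$ entered only through a single auxiliary fact, which Theorem \ref{thm2} now supplies unconditionally. Indeed, Lemma \ref{lem_1450} relies solely on It\^o's formula, on the boundedness of $K$, and on the well-definedness of the diffusion $(X_t^{(z)})_{t \geq 0}$ from (\ref{eq_1328}); the latter holds under the regularity conditions (\arabic{eq_935}) by the results of Section \ref{sec_diffusion}, together with the isomorphism between $M_{\mu}$ and $M_{\mu}^*$. Likewise, the proof of Lemma \ref{lem_1622} uses only the transport equation (\ref{eq_1234}) and the convexity of $\rho$ --- through the inequality $\rho_{i\ell}\vphi^{1i}\vphi^{1\ell} \geq 0$ --- and so it too is valid whenever (\arabic{eq_935}) holds. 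The only place where the extra assumptions were genuinely used is the proof of Lemma \ref{lem_1229}: there one appeals to Proposition \ref{prop_1101} to produce a constant $M > 0$ with $\nabla^2 \psi(x) \leq M \cdot Id$ for all $x \in \RR^n$, which is then invoked to bound $f = \vphi^{11}$ and thereby to turn the local martingale in (\ref{eq_1626}) into a submartingale.

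So the key step is to replace that appeal. Since the transport equation forces $\nabla^2 \psi(x)$ to be positive-definite, its operator norm is at most its trace, and Theorem \ref{thm2} gives $\| \nabla^2 \psi(x) \| \leq \Delta \psi(x) \leq 2 R^2(K)$ for all $x \in \RR^n$, with no strict-convexity assumptions beyond (\arabic{eq_935}). Using $(\nabla^2 \vphi(x))^{-1} = \nabla^2 \psi(\nabla \vphi(x))$, we obtain for every $x \in K$
\begin{equation*}
f(x) = \vphi^{11}(x) = e_1 \cdot (\nabla^2 \vphi(x))^{-1} e_1 \leq \left\| \nabla^2 \psi(\nabla \vphi(x)) \right\| \leq 2 R^2(K),
\end{equation*}
where $e_1 = (1,0,\ldots,0)$. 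Thus $f$ is bounded on $K$ by $M = 2 R^2(K)$, which is precisely the input that the proof of Lemma \ref{lem_1229} extracted from Proposition \ref{prop_1101}.

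With this boundedness in hand, the proofs of Lemma \ref{lem_1229} and then of Corollary \ref{cor_1501} go through verbatim: the local martingale $(M_t)_{0 \leq t \leq T}$ in (\ref{eq_1626}) is bounded from above, hence a submartingale, so $\EE M_t \geq 0$, and taking expectations in (\ref{eq_1626}) with $\eps = 1$ yields (\ref{eq_1233}); combining this with Lemma \ref{lem_1450} as in Corollary \ref{cor_1501} gives the asserted matrix inequality. There is no circularity, since the proof of Theorem \ref{thm2} invokes Corollary \ref{cor_1501} only under the strict-convexity assumptions, disposing of those assumptions afterwards by a Hausdorff-approximation argument. The only real work is the bookkeeping just described --- checking that strict convexity was never used elsewhere along the chain Lemma \ref{lem_1450} $\to$ Lemma \ref{lem_1622} $\to$ Lemma \ref{lem_1229} $\to$ Corollary \ref{cor_1501} --- so I do not expect a genuine obstacle here.
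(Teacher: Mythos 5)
Your proposal is correct and is essentially the paper's own argument: the paper likewise observes that the strict-convexity hypotheses entered only through the bound $\nabla^2 \psi \leq M \cdot Id$ used in Lemma \ref{lem_1229}, and that Theorem \ref{thm2} supplies $M = 2R^2(K)$ unconditionally, so Corollary \ref{cor_1501} upgrades to Corollary \ref{cor_905}. Your additional remarks on the non-circularity and on the other lemmas in the chain match the paper's (implicit) reasoning.
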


\section{The Brascamp-Lieb inequality as a Poincar\'e inequality}

We retain 
the assumptions and notation of the previous section. That is,
$\mu$ is a log-concave probability measure on $\RR^n$, with barycenter
at the origin, that satisfies the regularity assumptions
(\arabic{eq_935}). The measure $\mu$ is the moment-measure of the smooth and convex function $\psi: \RR^n \rightarrow \RR$.
Equation (\ref{eq_1120}) holds true, and we denote $\vphi = \psi^*$.
According to the Brascamp-Lieb inequality \cite{BL}, for any smooth function $u: \RR^n \rightarrow \RR$
such that $u e^{-\psi}$ is integrable,
\begin{equation} \int_{\RR^n} u e^{-\psi} = 0 \quad \quad \Longrightarrow \quad \quad \int_{\RR^n} u^2 e^{-\psi}
\leq \int_{\RR^n} \left[ (\nabla^2 \psi)^{-1} \nabla u \cdot \nabla u \right] e^{-\psi}. \label{eq_1442}
\end{equation}
Equality in (\ref{eq_1442}) holds when $u(x) = \nabla \psi(x) \cdot \theta$ for some $\theta \in \RR^n$.
Note that (\ref{eq_1442}) is precisely the Poincar\'e inequality with the best constant of the weighted Riemannian manifold
$M_{\mu}^*$. By using the isomorphism between $M_{\mu}$ and $M_{\mu}^*$, we translate (\ref{eq_1442}) as follows:
For any smooth function $f: K \rightarrow \RR$ which is $\mu$-integrable,
\begin{equation} Var_{\mu}(f) \leq \int_{K} \left( \vphi^{ij} f_i f_j \right) d \mu, \label{eq_1443_}
\end{equation}
where $Var_{\mu}(f) = \int f^2 d \mu- (\int f d \mu)^2$. Equality in (\ref{eq_1443_}) holds when $f(x) = A + x \cdot \theta$
for some $\theta \in \RR^n$ and $A \in \RR$.
This is in accordance with the fact that linear functions are eigenfunctions, i.e.,
$$ L x^i = -x^i \quad \quad \quad \quad (i=1,\ldots,n) $$
where $L u = \vphi^{ij} u_{ij} - x^i u_i$ is the Laplacian of the
weighted Riemannian manifold $M_{\mu}$. In fact, (\ref{eq_1443_}) means that the
spectrum of the (Friedrich extension of the) operator $L$ cannot intersect the interval $(-1,0)$,
and that the restriction of $-L$ to the subspace of mean-zero functions is at least the identity operator, in the sense
of symmetric operators.

\medskip Theorem \ref{thm2} states that $\Delta \Psi(x) \leq 2 R^2(K)$ everywhere in $\RR^n$. A weak conclusion
is that $\nabla^2 \psi(x) \leq 2 R^2(K) \cdot Id$, or rather, that $(\nabla^2 \vphi(x))^{-1} \leq 2 R^2(K) \cdot Id$.
By substituting this information into (\ref{eq_1443_}), we see that for any smooth function $f \in L^1(\mu)$,
\begin{equation}
 Var_{\mu}(f) \leq 2 R^2(K) \int_{K} |\nabla f|^2 d\mu. \label{eq_1119}
 \end{equation}
This completes the proof of Corollary \ref{cor_920}.
See \cite{K_moment} for more Poincar\'e-type inequalities that are obtained by imposing a
Riemannian structure on the convex body $K$.
The Kannan-Lovas\'z-Simonovits conjecture speculates that $R^2(K)$ in (\ref{eq_1119}) may be
replaced by a universal constant times $\| Cov(\mu) \|$, where $Cov(\mu)$ is the covariance matrix of the random vector
that is distributed according to $\mu$, and $\| \cdot \|$ is the operator norm.

\medskip A potential way to make progress towards the Kannan-Lovas\'z-Simonovits conjecture is to try to
bound the matrices $(\nabla^2 \vphi)^{-1}(x) \ (x \in K)$ in terms of $Cov(\mu)$.
 The following proposition provides a modest step in this direction:

\begin{proposition} Fix $\theta \in S^{n-1}$ and denote
$$ V = \int_{\RR^n} (x \cdot \theta)^2 d \mu(x). $$
Then, for any $p \geq 1$,
$$ \left( \int_K  \left| \frac{(\nabla^2 \vphi)^{-1} \theta \cdot \theta}{V} \right|^p  d \mu \right)^{1/p} \leq 4 p^2. $$
\end{proposition}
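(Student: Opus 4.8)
The plan is to push the matrix bound on $(\nabla^2 \vphi)^{-1}$ furnished by Corollary \ref{cor_905} through the diffusion semigroup of $M_\mu$, converting it into a bound in terms of the second moment of $\mu$, and then to close with the reverse H\"older inequality for log-concave measures. Since the quantity to be estimated is rotation-invariant, we may assume $\theta = e_1 = (1,0,\ldots,0)$. Write $f(x) = \vphi^{11}(x) = (\nabla^2 \vphi)^{-1}(x) e_1 \cdot e_1 \geq 0$ and $q(x) = (x \cdot e_1)^2$, so that $V = \int_K q \, d\mu$, and let $P_t h(z) = \EE h(X_t^{(z)})$ be the Markov semigroup of $M_{\mu}$ built in Section \ref{sec_BE}.

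First I would establish the pointwise estimate $f \leq 2\, P_{\log 2} q$ on $K$. Indeed, Corollary \ref{cor_905} — which is assumption-free, the strict-convexity hypotheses of Corollary \ref{cor_1501} having already been removed in Section \ref{sec_BE} — applied in the direction $e_1$ gives, for every $z \in K$ and $t > 0$,
\[ f(z) \leq \frac{e^{2t}}{2(e^t-1)}\, Var\!\left( X_t^{(z)} \cdot e_1 \right) \leq \frac{e^{2t}}{2(e^t-1)}\, \EE\!\left[ \left( X_t^{(z)} \cdot e_1 \right)^2 \right] = \frac{e^{2t}}{2(e^t-1)}\, (P_t q)(z), \]
and the choice $t = \log 2$ minimizes $e^{2t}/(2(e^t-1))$, the minimal value being $2$; all expectations here are finite because $X_t^{(z)}$ stays in the bounded set $K$.

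Next I would raise this to the power $p \geq 1$ and integrate against $\mu$. By Jensen's inequality applied to the law of $X_{\log 2}^{(z)}$ (using that $t \mapsto t^p$ is convex on $[0,\infty)$) one has $(P_{\log 2} q)^p \leq P_{\log 2}(q^p)$ pointwise on $K$; integrating and using that $\mu$ is a stationary measure of the diffusion $(X_t)$ — the $M_{\mu}$-version of Corollary \ref{cor_1206}, valid since $M_{\mu} \cong M_{\mu}^*$ — we get $\int_K (P_{\log 2} q)^p \, d\mu \leq \int_K q^p \, d\mu$, whence $\int_K f^p \, d\mu \leq 2^p \int_K |x \cdot e_1|^{2p} \, d\mu$. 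Now the push-forward of $\mu$ under $x \mapsto x\cdot e_1$ is a log-concave probability measure on $\RR$, so Borell's reverse H\"older inequality for such measures — the estimate $\left( \int |z|^r \right)^{1/r} \leq \bigl( \Gamma(r+1)^{1/r}/\Gamma(s+1)^{1/s} \bigr) \left( \int |z|^s \right)^{1/s}$ for $r \geq s \geq 1$ — gives, with $s = 2$ and $r = 2p$,
\[ \left( \int_K |x \cdot e_1|^{2p} \, d\mu \right)^{1/(2p)} \leq \frac{\Gamma(2p+1)^{1/(2p)}}{\Gamma(3)^{1/2}} \, V^{1/2} = \frac{((2p)!)^{1/(2p)}}{\sqrt{2}} \, V^{1/2} \leq \frac{2p}{\sqrt{2}} \, V^{1/2}, \]
using $(2p)! \leq (2p)^{2p}$. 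Hence $\int_K |x \cdot e_1|^{2p} \, d\mu \leq 2^p p^{2p} V^p$, and combining with the previous display $\int_K f^p \, d\mu \leq 4^p p^{2p} V^p$, which is the assertion after taking $p$-th roots (recall $f \geq 0$, so the absolute values in the statement are superfluous).

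The one genuinely external ingredient is Borell's reverse H\"older inequality with the sharp $\Gamma$-quotient constant; everything else is bookkeeping with the semigroup of Sections \ref{sec_diffusion} and \ref{sec_BE}, and the numerical constants are tuned precisely so that the two factors of $2^p$ — one from the optimal choice $t = \log 2$, one from $(2p)! \leq (2p)^{2p}$ — together with the $\sqrt{2}$ coming from $\Gamma(3)$ multiply out to exactly $4^p p^{2p}$. The only points demanding a little care are the justification that $\mu$ is stationary under the diffusion and that the Jensen and Fubini steps are legitimate, both of which reduce to the Markov property together with the boundedness of $K$.
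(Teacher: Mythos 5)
Your proof is correct and follows essentially the same route as the paper: the bound of Corollary \ref{cor_905} at $t=\log 2$ (constant $2$), stationarity of $\mu$ under the diffusion via Corollary \ref{cor_1206}, a conditional Jensen/H\"older step, and the Berwald--Borell reverse H\"older inequality with the constant $\Gamma(2p+1)^{1/(2p)}/\Gamma(3)^{1/2}\le 2p/\sqrt{2}$. The only cosmetic differences are the semigroup notation $P_t$ in place of the paper's random starting point $Z\sim\mu$, and writing $(2p)!$ where one should keep $\Gamma(2p+1)$ for non-integer $2p$.
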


\begin{proof} Without loss of generality, assume that $\theta = e_1 = (1,0,\ldots,0)$. According to Corollary \ref{cor_905},
for any $z \in K$ and $t > 0$,
\begin{equation} \vphi^{11}(z) \leq \frac{e^{2t}}{2 (e^t - 1)} Var \left( X_t^{(z)} \cdot e_1 \right)
\leq \frac{e^{2t}}{2 (e^t - 1)} \EE \left( X_t^{(z)} \cdot e_1 \right)^2.
\label{eq_1132} \end{equation}
Let $Z$ be a random vector that is distributed according to $\mu$, independent of the Brownian motion used
in the construction of the process $(X_t^{(z)})_{t \geq 0, z \in K}$. It follows from Corollary \ref{cor_1206} that for any fixed $t \geq 0$
the random vector $X_t^{(Z)}$ is also distributed according to $\mu$. By setting $t = \log 2$
in (\ref{eq_1132}) and applying H\"older's inequality, we see that for any $p \geq 1$,
\begin{equation}
 \EE \left| \vphi^{11}(Z) \right|^p \leq 2^p \EE \left| X_t^{(Z)} \cdot e_1 \right|^{2p} =
2^p \EE \left| Z \cdot e_1 \right|^{2p}. \label{eq_1307}
\end{equation}
The random vector $Z$ has a log-concave density. According to the Berwald inequality \cite{ber, bor},
\begin{equation} \left( \EE \left| Z \cdot e_1 \right|^{2p} \right)^{1/(2p)} \leq \frac{\Gamma(2p +1 )^{1/(2p)}}{\Gamma(3)^{1/2}}
\sqrt{ \EE \left| Z \cdot e_1 \right|^{2} } \leq \frac{2p}{\sqrt{2}} \sqrt{V}.
\label{eq_1308}
\end{equation}
(The Berwald inequality is formulated in \cite{ber, bor} for the uniform measure on a convex body, but it is
well-known that is applies for all log-concave probability measures. For instance, one may deduce the log-concave version
from the convex-body version by using a marginal argument as in \cite{kl}).
The proposition follows from (\ref{eq_1307}) and (\ref{eq_1308}).
\end{proof}

There are several heuristic arguments that indicate much better bounds for $(\nabla^2 \vphi)^{-1}$ than the
ones proven in this paper. These better bounds, in turn, could lead to interesting Poincar\'e-type inequalities
for log-concave measures. At the moment, it is not clear to us how to rigorously justify these heuristic arguments,
partly because of the annoying fact that our Riemannian structure on $K$ is not geodesically complete.
Still, hoping that this work will be continued, we decided to add the Roman numeral I at the end of its title.

{
}


\end{document}